\newif\ifdraftmode
\newcommand\thisShorttitle{Sample-based almost-sure quasi-optimal approximation in reproducing kernel Hilbert spaces}
\newcommand\thisTitle{\thisShorttitle}
\newcommand\thisSubject{math.NA, cs.LG}
\newcommand\thisAuthor{Philipp Trunschke, Anthony Nouy}
\newcommand\thisKeywords{approximation, reproducing kernel Hilbert spaces, optimal sampling, volume sampling, determinantal point processes, heuristics, greedy algorithm, submodular set functions}
\addspace\UrlFont{\mkbibbrackets{\thefield{eprintclass}}}}}}
\addspace\UrlFont{\mkbibbrackets{\thefield{eprintclass}}}}}}
\def\mscclassname{{\bfseries \emph{AMS subject classifications.}}}%
\def\mscclasses#1{\par\addvspace\medskipamount{\rightskip=0pt plus1cm
\def\and{\ifhmode\unskip\nobreak\fi\ $\cdot$
}\noindent\mscclassname\enspace\ignorespaces#1\par}}
\def\codename{{\bfseries \emph{Code.}}}%
\def\code#1{\par\addvspace\medskipamount{\rightskip=0pt plus1cm\noindent\codename\enspace\ignorespaces\url{#1}\par}}
    \definecolor{amaranth}{rgb}{0.9, 0.17, 0.31}%
    \definecolor{americanrose}{rgb}{1.0, 0.01, 0.24}
    \colorlet{alertcolor}{amaranth}
    \colorlet{notecolor}{MidnightBlue}
    \newcommand{\todo}[1]{\marginpar{\tiny\color{alertcolor}#1}\@latex@warning{#1}\xspace}
    \newcommand{\note}[1]{\marginpar{\tiny\color{notecolor}#1}}
    \definecolor{bleudefrance}{rgb}{0.19, 0.55, 0.91}
    \newcommand{\numRevisions}{2}
    \newcommand{\revision}[2][0]{%
    \begingroup%
        \newcount\colorRatio%
        \colorRatio=\numexpr(100*(#1+1))/\numRevisions\relax%
        \colorlet{revisionColor}{bleudefrance!\the\colorRatio!black}\color{revisionColor}#2%
    \endgroup}
    \newcommand{\todo}[1]{}
    \newcommand{\note}[1]{}
\tikzset{core/.style={inner sep=0pt}}
\tikzset{contraction/.style={line width=0.75}}
\tikzset{contractionDots/.style={contraction, dotted}}
\colorlet{dimgray}{black!35!white}
\colorlet{lightgray}{dimgray!35!white}
\declaretheoremstyle[bodyfont=\itshape, mdframed={backgroundcolor=lightgray, linecolor=dimgray, linewidth=0.75pt, innertopmargin=1.5ex}]{claim}
\declaretheorem[style=claim]{theorem}
\declaretheorem[style=claim, name=Theorem, numbered=no]{theorem*}
\declaretheorem[style=claim, numberlike=theorem]{lemma}
\declaretheorem[style=claim, numberlike=theorem]{proposition}
\declaretheorem[style=claim, numberlike=theorem]{corollary}
\declaretheoremstyle[mdframed={backgroundcolor=lightgray, linecolor=dimgray, linewidth=0.75pt, innertopmargin=1.5ex}]{definition}
\declaretheorem[style=definition, numberlike=theorem]{definition}
\declaretheoremstyle[bodyfont=\itshape, mdframed={backgroundcolor=white, linecolor=dimgray, linewidth=0.75pt, innertopmargin=1.5ex}]{remark}
\declaretheorem[style=remark, numberlike=theorem]{remark}
\declaretheoremstyle[mdframed={backgroundcolor=white, linecolor=dimgray, linewidth=0.75pt, innertopmargin=1.5ex}]{example}
\declaretheorem[style=example, numberlike=theorem]{example}
\newcommand{\indep}{\perp\kern-0.6em\perp}
\newcommand*{\mbb}[1]{\mathbb{#1}}
\newcommand*{\mcal}[1]{\mathcal{#1}}
\newcommand*{\mfrak}[1]{\mathfrak{#1}}
\newcommand*{\dd}{\ensuremath{\mathrm{d}}}
\newcommand*{\dx}[1][x]{\ensuremath{\,\dd{#1}}}
\let\inf\relax  
\DeclareMathOperator*{\inf}{inf\vphantom{\sup}}
\DeclareMathOperator*{\argmin}{arg\,min}
\DeclareMathOperator*{\argmax}{arg\,max}
\DeclareMathOperator*{\esssup}{ess\,sup}
\DeclarePairedDelimiter{\pars}{\ensuremath{(}}{\ensuremath{)}}
\DeclarePairedDelimiter{\bracs}{\ensuremath{[}}{\ensuremath{]}}
\DeclarePairedDelimiter{\braces}{\ensuremath{\{}}{\ensuremath{\}}}
\DeclarePairedDelimiter{\inner}{\langle}{\rangle}
\DeclarePairedDelimiter{\norm}{\|}{\|}
\DeclarePairedDelimiter{\abs}{\lvert}{\rvert}
\DeclarePairedDelimiter{\seminorm}{\vert}{\vert}
\newcommand{\opnorm}{\@ifstar\@opnorms\@opnorm}
\newcommand{\@opnorms}[1]{%
  \left|\mkern-1.5mu\left|\mkern-1.5mu\left|
   #1
  \right|\mkern-1.5mu\right|\mkern-1.5mu\right|
}
\newcommand{\@opnorm}[2][]{%
  \mathopen{#1|\mkern-1.5mu#1|\mkern-1.5mu#1|}
  #2
  \mathclose{#1|\mkern-1.5mu#1|\mkern-1.5mu#1|}
}
\mathchardef\texthyphen="2D
\let\oldbullet\bullet
\newlength{\raisebulletlen}
\renewcommand\bullet{\raisebox{\raisebulletlen}{\,\tiny$\oldbullet$}\,}
\newcommand*{\rom}[1]{\expandafter\@slowromancap\romannumeral #1@}
\DeclarePairedDelimiterX\Set[1]\{\}{%
  #1%
}
\def\multiset#1#2{\ensuremath{\left(\kern-.3em\left(\genfrac{}{}{0pt}{}{#1}{#2}\right)\kern-.3em\right)}}
\title{\thisTitle} 
\date{}
\author{
\href{https://orcid.org/0000-0003-3953-9006}{\includegraphics[height=0.7em]{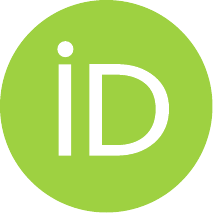}\hspace{1mm}\textcolor{black}{Nando Hegemann
}} \\
    Physikalisch-Technische Bundesanstalt \\
    Department 8.4 --- Mathematical Modelling and Data Analysis \\
    Abbestra\ss{}e 2--12\\
    10587 Berlin \\
    Germany \\
    \href{mailto:nando.hegemann@ptb.de}{\texttt{nando.hegemann@ptb.de}} \\
\And
\href{https://orcid.org/0000-0002-2149-2986}{\includegraphics[height=0.7em]{orcid.pdf}\hspace{1mm}\textcolor{black}{Anthony Nouy
}} \\
    Centrale Nantes \\
    Nantes Universit\'e \\
    Laboratoire de Math\'ematiques Jean Leray \\
    CNRS UMR 6629 \\
    France \\
    \href{mailto:anthony.nouy@ec-nantes.fr}{\texttt{anthony.nouy@ec-nantes.fr}} \\
\And
\href{https://orcid.org/0000-0002-2995-126X}{\includegraphics[height=0.7em]{orcid.pdf}\hspace{1mm}\textcolor{black}{Philipp Trunschke\thanks{Corresponding Author. 
}}} \\
    Centrale Nantes \\
    Nantes Universit\'e \\
    Laboratoire de Math\'ematiques Jean Leray \\
    CNRS UMR 6629 \\
    France \\
    \\
    Physikalisch-Technische Bundesanstalt \\
    Department 8.4 --- Mathematical Modelling and Data Analysis \\
    Abbestra\ss{}e 2--12\\
    10587 Berlin \\
    Germany \\
    \href{mailto:philipp.trunschke@ptb.de}{\texttt{philipp.trunschke@ptb.de}} \\
}
\begin{document}
\maketitle

\begin{abstract}
    This paper addresses the problem of approximating an unknown function from point evaluations.
    When obtaining these point evaluations is costly, minimising the required sample size becomes crucial, and it is unreasonable to reserve a sufficiently large test sample for estimating the approximation accuracy.
    Therefore, an approximation with a certified quasi-optimality factor is required.    
    This article shows that such an approximation can be obtained when the sought function lies in a \emph{reproducing kernel Hilbert space} (RKHS) and is to be approximated in a finite-dimensional linear subspace $\mcal{V}_d$.
    However, selecting the sample points to minimise the quasi-optimality factor requires optimising over an infinite set of points and computing exact inner products in RKHS, which is often infeasible in practice.
    Extending results from optimal sampling for $L^2$ approximation, the present paper proves that random points, drawn independently from the Christoffel sampling distribution associated with $\mcal{V}_d$, can yield a controllable quasi-optimality factor with high probability.
    Inspired by this result, a novel sampling scheme, coined subspace-informed volume sampling, is introduced and evaluated in numerical experiments, where it outperforms classical i.i.d.\ Christoffel sampling and continuous volume sampling.
    To reduce the size of such a random sample, an additional greedy subsampling scheme with provable suboptimality bounds is introduced.
    Our presentation is of independent interest to the inverse problems community, as it offers a simpler interpretation of the \emph{parametrised background data weak} (PBDW) method.
\end{abstract}

\keywords{approximation \and reproducing kernel Hilbert spaces \and optimal sampling \and volume sampling 
\and greedy algorithm \and submodular set functions}
\vspace{-1em}
\mscclasses{41A25 \and 41A65 \and 68W25 \and 90C59}
\vspace{-1em}
\code{https://github.com/ptrunschke/almost_sure_least_squares}


\section{Introduction}
\label{sec:introduction}


This manuscript considers the problem of approximating a function in a finite-dimensional subspace of a \emph{reproducing kernel Hilbert space} (RKHS).
It presents a method for obtaining a set of points and computing a quasi-optimal approximation using the function's values at these points.

To make this concrete, let $\mathcal{V}$ be a RKHS with norm $\norm{\bullet}_{\mathcal{V}}$ and reproducing kernel $k : \mathcal{X}\times \mathcal{X} \to \mathbb{R}$ and consider the problem of approximating a function $u\in\mathcal{V}$ in a $d$-dimensional subspace $\mathcal{V}_d\subseteq \mathcal{V}$.
The best possible approximation is given by the $\mcal{V}$-orthogonal projection
$$
    P_{\mcal{V}_d}u := \argmin_{v\in\mcal{V}_d} \; \norm{u - v}_{\mcal{V}} ,
$$
which is, in general, impossible to compute.
This paper considers the computable, kernel-based approximation
$$
    P_{\mcal{V}_d}^{\boldsymbol{x}} u
    := \argmin_{v\in \mcal{V}_d} \; \norm{P_{\mcal{V}_{\boldsymbol{x}}} (u - v)}_{\mcal{V}} ,
$$
where $P_{\mcal{V}_{\boldsymbol{x}}}$ denotes the $\mcal{V}$-orthogonal projection onto the space $\mcal{V}_{\boldsymbol{x}} := \operatorname{span} \{k(x_i , \bullet) : 1\le i\le n\}$, and corresponds to the kernel interpolation operator at points $\boldsymbol{x} = \{x_1,\dots,x_n\} \in \mcal{X}^n$.
Notably, this approximation satisfies the subsequent quasi-optimality property.
\begin{theorem*}
    Let $u\in\mcal{V}$ and $\boldsymbol{x}\in\mcal{X}^n$.
    There exist (computable) constants
    $1\le\mu(\boldsymbol{x})$ and $0\le\tau(\boldsymbol{x})\le1$
    such that $P^{\boldsymbol{x}}_{\mathcal{V}_d} u$ is well-defined when $\mu(\boldsymbol{x}) < \infty$ and satisfies
    $$
        \norm{u - P_{\mcal{V}_d}^{\boldsymbol{x}} u}_{\mcal{V}}^2 
        \le (1 + \mu(\boldsymbol{x})^2 \tau(\boldsymbol{x})^2) \norm{u - P_{\mcal{V}_d} u}_{\mcal{V}}^2 .
    $$
    Moreover, it holds that $\tau(\boldsymbol{x}) \le \min\{(1 + \sqrt{d})(1 - \mu(\boldsymbol{x})^{-2}),1\}$.
\end{theorem*}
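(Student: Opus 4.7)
The plan is to (i) decompose the total error into a best-approximation part and a correction lying in $\mcal{V}_d$ via Pythagoras, (ii) identify the correction through the normal equations of the weighted least squares problem defining $P_{\mcal{V}_d}^{\boldsymbol{x}} u$, and (iii) bound it using two Rayleigh quotients that will serve as $\mu$ and $\tau$.

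Set $e := u - P_{\mcal{V}_d} u$ and $w := P_{\mcal{V}_d} u - P_{\mcal{V}_d}^{\boldsymbol{x}} u \in \mcal{V}_d$. Since $e$ is $\mcal{V}$-orthogonal to $\mcal{V}_d$, Pythagoras yields $\norm{u - P_{\mcal{V}_d}^{\boldsymbol{x}} u}_{\mcal{V}}^2 = \norm{e}_{\mcal{V}}^2 + \norm{w}_{\mcal{V}}^2$, so it suffices to prove $\norm{w}_{\mcal{V}} \le \mu(\boldsymbol{x})\,\tau(\boldsymbol{x})\,\norm{e}_{\mcal{V}}$. First-order optimality of $P_{\mcal{V}_d}^{\boldsymbol{x}} u$ gives $\inner{P_{\mcal{V}_{\boldsymbol{x}}}(e + w), P_{\mcal{V}_{\boldsymbol{x}}} v}_{\mcal{V}} = 0$ for every $v \in \mcal{V}_d$. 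Testing with $v = w$, simplifying via the self-adjoint idempotency of $P_{\mcal{V}_{\boldsymbol{x}}}$, and discarding the $\mcal{V}_d$-component of $P_{\mcal{V}_{\boldsymbol{x}}} w$ inside the inner product with $e$ (legitimate because $e$ is $\mcal{V}$-orthogonal to $\mcal{V}_d$), one arrives at
\[
    \norm{P_{\mcal{V}_{\boldsymbol{x}}} w}_{\mcal{V}}^2 = -\inner{e,\, (I - P_{\mcal{V}_d}) P_{\mcal{V}_{\boldsymbol{x}}} w}_{\mcal{V}}.
\]
Defining the Rayleigh quotients
\[
    \mu(\boldsymbol{x}) := \sup_{v \in \mcal{V}_d \setminus \{0\}} \frac{\norm{v}_{\mcal{V}}}{\norm{P_{\mcal{V}_{\boldsymbol{x}}} v}_{\mcal{V}}}, \qquad \tau(\boldsymbol{x}) := \sup_{v \in \mcal{V}_d \setminus \{0\}} \frac{\norm{(I - P_{\mcal{V}_d}) P_{\mcal{V}_{\boldsymbol{x}}} v}_{\mcal{V}}}{\norm{P_{\mcal{V}_{\boldsymbol{x}}} v}_{\mcal{V}}},
\]
a Cauchy--Schwarz step gives $\norm{P_{\mcal{V}_{\boldsymbol{x}}} w}_{\mcal{V}} \le \tau\,\norm{e}_{\mcal{V}}$, and then $\norm{w}_{\mcal{V}} \le \mu\,\norm{P_{\mcal{V}_{\boldsymbol{x}}} w}_{\mcal{V}} \le \mu\,\tau\,\norm{e}_{\mcal{V}}$. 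The elementary bounds $\mu \ge 1$ (since $P_{\mcal{V}_{\boldsymbol{x}}}$ is a contraction) and $\tau \le 1$ (Pythagoras inside $\mcal{V}$) follow at once, and both quantities are computable from the Gram matrix of a $\mcal{V}$-orthonormal basis of $\mcal{V}_d$ evaluated against the kernel sections $k(x_i,\bullet)$.

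The secondary bound $\tau \le (1 + \sqrt{d})(1 - \mu^{-2})$ is the more delicate part. I would pass to the singular value decomposition of $P_{\mcal{V}_{\boldsymbol{x}}}|_{\mcal{V}_d}$, whose squared singular values $\lambda_1 \ge \cdots \ge \lambda_d = \mu^{-2}$ are the squared cosines of the principal angles between $\mcal{V}_d$ and $\mcal{V}_{\boldsymbol{x}}$. A Rayleigh-quotient computation expresses $\tau^2$ in terms of the $\lambda_i$ and gives the naive operator-norm bound $\tau \le \sqrt{1 - \mu^{-2}}$, which only matches the linear target when $\mu$ is bounded away from $1$. Upgrading the square-root rate to $(1 + \sqrt{d})(1 - \mu^{-2})$ will require aggregating the $d$ deviations $1 - \lambda_i$ through a trace/Frobenius-type inequality that converts a maximum into a sum at the cost of a $\sqrt{d}$ factor, possibly by replacing the supremum defining $\tau$ with a coarser but rank-$d$-friendly surrogate. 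Pinpointing the precise algebraic identity that yields the constant $1 + \sqrt{d}$ is what I expect to be the main obstacle; once it is in hand, the remainder reduces to a standard spectral estimate on the Gram operator $P_{\mcal{V}_d} P_{\mcal{V}_{\boldsymbol{x}}}|_{\mcal{V}_d}$.
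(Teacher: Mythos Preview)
Your Pythagorean decomposition, normal-equation identity, and extraction of $\mu$ are correct and match the paper's route to the error bound. The paper arrives at the same inequality $\norm{w}_{\mcal{V}}\le\mu\,\norm{P_{\mcal{V}_{\boldsymbol{x}}}w}_{\mcal{V}}$ and then bounds $\norm{P_{\mcal{V}_{\boldsymbol{x}}}w}_{\mcal{V}}$ in terms of $\norm{e}_{\mcal{V}}$, so on that part you are aligned.

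The genuine gap is in the second claim, and it is not a matter of sharpening an estimate: your $\tau$ is the \emph{wrong quantity}. In the principal-angle basis where $G^{\boldsymbol{x}}=\operatorname{diag}(\lambda_1,\ldots,\lambda_d)$ one has $\norm{P_{\mcal{V}_{\boldsymbol{x}}}v}_{\mcal{V}}^2=\sum_i c_i^2\lambda_i$ and $\norm{(I-P_{\mcal{V}_d})P_{\mcal{V}_{\boldsymbol{x}}}v}_{\mcal{V}}^2=\sum_i c_i^2\lambda_i(1-\lambda_i)$, so your Rayleigh quotient equals $\max_i(1-\lambda_i)=1-\mu^{-2}$ \emph{exactly}. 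Thus $\tau_{\text{yours}}=\sqrt{1-\mu^{-2}}$, and the inequality $\sqrt{1-\mu^{-2}}\le(1+\sqrt{d})(1-\mu^{-2})$ fails whenever $1-\mu^{-2}<(1+\sqrt{d})^{-2}$. No trace or Frobenius aggregation can rescue this, because there is no slack to aggregate.

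What the paper does instead is avoid your Cauchy--Schwarz step altogether and work directly with the operator $A:=P_{\mcal{V}_{\boldsymbol{x}}}P_{\mcal{V}_d}^{\boldsymbol{x}}(I-P_{\mcal{V}_d})$, taking $\tau(\boldsymbol{x}):=\min\{\norm{I-G^{\boldsymbol{x}}}_{\mathrm{Fro}}+\norm{I-G^{\boldsymbol{x}}}_{2},\,1\}$. The bound $\norm{A}_{\mcal{V}\to\mcal{V}}\le\tau(\boldsymbol{x})$ comes from the splitting
\[
A \;=\; \underbrace{P_{\mcal{V}_{\boldsymbol{x}}}(P_{\mcal{V}_d}^{\boldsymbol{x}}-P_{\mcal{V}_d})P_{\mcal{V}_{\boldsymbol{x}}}}_{A_1}
\;+\;\underbrace{P_{\mcal{V}_{\boldsymbol{x}}}P_{\mcal{V}_d}(P_{\mcal{V}_{\boldsymbol{x}}}-I)}_{A_2},
\]
after which explicit matrix representations in $k(\boldsymbol{x},\bullet)$ and $b$ yield $\norm{A_1}_{\mcal{V}\to\mcal{V}}\le\norm{I-G^{\boldsymbol{x}}}_{\mathrm{Fro}}$ and $\norm{A_2}_{\mcal{V}\to\mcal{V}}\le\norm{I-G^{\boldsymbol{x}}}_{2}$. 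Since $\lambda_{\max}(G^{\boldsymbol{x}})\le 1$ gives $\norm{I-G^{\boldsymbol{x}}}_{2}=1-\mu^{-2}$ and $\norm{I-G^{\boldsymbol{x}}}_{\mathrm{Fro}}\le\sqrt{d}\,\norm{I-G^{\boldsymbol{x}}}_{2}$, the bound $\tau\le(1+\sqrt{d})(1-\mu^{-2})$ follows immediately. The missing idea in your plan is precisely this operator splitting of $A$; once you have it, the $(1+\sqrt{d})$ constant drops out of elementary norm equivalence rather than any delicate spectral identity.
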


The preceding result is proven in Theorem~\ref{thm:error_bound_noiseless} and Lemmas~\ref{lem:mu_lmin} and~\ref{lem:mu_is_gramian_error}.
It guarantees that the error of $P_{\mcal{V}_d}^{\boldsymbol{x}} u$ is proportional to the best approximation error in the $\mcal{V}$-norm.
In this sense, the approximation $P_{\mcal{V}_d}^{\boldsymbol{x}} u$ is quasi-optimal in the $\mcal{V}$-norm.
Moreover, since the quasi-optimality factor depends mainly on the (computable) constant $\mu(\boldsymbol{x})$, it is natural to minimise this constant.
This is discussed in section~\ref{sec:error_control}.

To obtain a quasi-optimal complexity, we have to find a point set  $\boldsymbol{x}\in\mcal{X}^n$ satisfying $\mu(\boldsymbol{x})\le\mu_{\star}$ with $n\le Cd$.
Following~\cite{belhadji2020kernel}, we aim to do this via random sampling and provide sample size bounds in the subsequent version of Corollary~\ref{cor:ayoub}.

\begin{theorem*}
    Let $\Sigma$ be the integral operator associated to the kernel  $k$ of $\mathcal{V}$.
    Suppose that $\Sigma$ is compact and denote by $(\lambda_m)_{m\in\mathbb{N}}$ the sequence of eigenvalues of $\Sigma$.
    Then, if $\mathcal{V}_d$ is spanned by the first $d$ eigenfunctions of $\Sigma$, and $\boldsymbol{x}$ is drawn by continuous volume sampling, we can ensure $\mu(\boldsymbol{x}) \le \mu_\star$ if 
    \begin{align}
        n \gtrsim d^{1 + 1/(2s)}
        \text{\ \ if\ \ } \lambda_n \asymp n^{-2s}
        \qquad\text{and}\qquad
        n \gtrsim d 
        \text{\ \ if\ \ } \lambda_n \asymp \alpha^n
    \end{align}
    for $s\ge \tfrac{1}{2}$ (e.g., Sobolev spaces of finite smoothness) and $\alpha\in(0,1)$ (e.g., the Gaussian kernel).
\end{theorem*}

In addition to the sample size bounds not always being linear, this result only works for special approximation spaces $\mathcal{V}_d$.
To tackle these shortcomings, we propose another sampling method in section~\ref{sec:sampling}.
Although the analysis of this method turns out to be difficult, we provide several properties as well as sample size bounds in Theorems~\ref{thm:iid_bounds} and~\ref{thm:expectation}.
These probability bounds, and the fact that $\mu$ is computable, allow us to condition on the event $\mu(\boldsymbol{x}) \le \mu_\star$, as proposed in~\cite{haberstich2022boosted}.
This strategy is described in pseudocode in Algorithm~\ref{alg:det}.

Finally, to further reduce the point set size of a random selection, we investigate greedy subsampling schemes, culminating in Algorithm~\ref{alg:greedy}.
Theoretical bounds for this algorithm are provided in Proposition~\ref{prop:mu_properties}, Theorem~\ref{thm:submodular_bound} and Proposition~\ref{prop:approx_submodular} and summarised in the subsequent theorem.

\begin{minipage}{\textwidth}
\begin{theorem*}
    Let $\boldsymbol{x}\in\mathcal{X}^n$ be given.
    Then there exists an efficiently computable function $\eta : 2^{\boldsymbol{x}} \to [0, d\ \!]$ satisfying
    \begin{itemize}
        \item $(\eta(\boldsymbol{y})/d)^{-1/2} \le \mu(\boldsymbol{y})$ for all subsets $\boldsymbol{y}\subseteq\boldsymbol{x}$ and
        \item $\mu(\boldsymbol{y}) \le (\eta(\boldsymbol{y}) - (d-1))^{-1/2}$ for any subset $\boldsymbol{y}\subseteq\boldsymbol{x}$ satisfying $\eta(\boldsymbol{y}) \ge d-1$.
    \end{itemize}
    Define the greedy sequence of subsets
    $$
        \boldsymbol{y}_0 := \emptyset,
        \qquad
        \boldsymbol{y}_{k+1} = \boldsymbol{y}_k \oplus y_{k+1}, \quad y_{k+1} \in   \argmax_{y\in\boldsymbol{x}} \eta(\boldsymbol{y}_k \oplus y) .
    $$
    For any $C\in[0,\eta(\boldsymbol{x})]$, define
    the smallest set size $k(C)$ required to reach $\eta(\boldsymbol{y}_{k(C)}) \ge C$ with this greedy method as
    $
        k(C) := \min\{k\in\mathbb{N} : \eta(\boldsymbol{y}_k)\ge C\}
    $
    and define the optimal set size as
    $
        k_\star(C) := \min\{|\boldsymbol{y}| : \boldsymbol{y}\subseteq \boldsymbol{x} \text{\ and\ } \eta(\boldsymbol{y})\ge C\}
    $.
    Then for any $\varepsilon\in(0,1)$
    $$
        k(C) \lesssim \log(\varepsilon^{-1}) k_\star(C(1-\varepsilon)^{-1}) \,.
    $$
\end{theorem*}
\end{minipage}

Although the theoretical guarantees are not satisfactory, yet, initial numerical experiments indicate the potential of the proposed approach.


\subsection{Structure}

The remainder of this work is organised as follows.
After section~\ref{sec:notation} concisely introduces some required notations,
section~\ref{sec:noiseless} defines the projection operator $P_{\mcal{V}_d}^{\boldsymbol{x}}$ and proves the main Theorem~\ref{thm:error_bound_noiseless}.
Section~\ref{sec:error_control} starts by deriving basic properties of the quasi-optimality constants $\mu$ and $\tau$.
Subsequently, it discusses that optimising $\mu$ is NP-hard and proposes to use a probabilistic approach to generate the point set $\boldsymbol{x}\in\mcal{X}^n$.
This idea is justified by demonstrating that drawing $\boldsymbol{x}\in\mcal{X}^n$ by continuous volume sampling~\cite{belhadji2020kernel} already yields a (relatively) small value of $\mu(\boldsymbol{x})$ with non-zero probability.
Section~\ref{sec:sampling} introduces a novel sampling method that is better adapted to the problem of minimising $\mu$, and section~\ref{sec:subsampling} discusses a greedy sample selection strategy that can be used to reduce the sample size further.
Finally, section~\ref{sec:noisy} extends the error bounds from Theorem~\ref{thm:error_bound_noiseless} to perturbed observations and section~\ref{sec:experiments} provides experimental evidence for the proposed methods.
Section~\ref{sec:discussion} concludes the paper by discussing some limitations of the presented method.

\subsection{Related work}

\paragraph{Least squares approximation.}

The proposed framework agrees with least squares approximation in the fact that both use point-evaluations.
This is advantageous since evaluations of this form are often given.
However, the proposed methodology has three major advantages compared to the least squares framework
\begin{enumerate}
    \item quasi-optimality: $\|u - P_{\mathcal{V}_d}^{\boldsymbol{x}}u\| \lesssim \|u - P_{\mathcal{V}_d}u\|$ with the same norm on both sides,
    \item computable constants: $\mu$ and $\tau$ can be computed from the sample for every choice of basis of $\mcal{V}_d$,
    \item stronger error bounds: the error is measured in an RKHS norm, not the $L^2$ norm.
\end{enumerate}%
Controlling the quasi-optimality constant through $\mu(\boldsymbol{x})$ is not a new idea and has already been considered in the context of least squares approximation.
In this context, we require a   measure $\nu$ on $\mcal{X}$ and suppose that $\mcal{V}_d\subseteq L^2(\nu)$ 
(Note that the existence of a measure $\nu$ is not required for the kernel-based regression). 
Then the $L^2(\nu)$-orthogonal projection onto $\mcal{V}_d$ is well-defined and we can denote it by $Q_{\mcal{V}_d}$.
Moreover, for a point set $\boldsymbol{x}\in\mcal{X}^n$, we can define the empirical projection
\begin{equation}
\label{eq:wls_projection}
    Q_{\mcal{V}_d}^{\boldsymbol{x}} u
    := \argmin_{v\in\mcal{V}_d}\, \seminorm{u - v}_{\boldsymbol{x}}^2
    \qquad\text{with}\qquad
    \seminorm{u-v}_{\boldsymbol{x}}^2
    := \frac1n \sum_{i=1}^n w(x_i) \abs{u(x_i) - v(x_i)}^2 .
\end{equation}
Given a weight function $w : \mcal{X}\to(0,\infty)$ satisfying $\int w^{-1} \dx[\nu] = 1$, the measure $w^{-1}\nu$ is a probability measure.
Drawing $\boldsymbol{x}\sim (w^{-1}\nu)^{\otimes n}$, one can prove~\cite{cohen_2017_optimal} that
\begin{equation}
\label{eq:def:mu_L2}
    \norm{u-Q_{\mcal{V}_d}^{\boldsymbol{x}} u}_{L^2(\nu)}^2 \le \norm{u-Q_{\mcal{V}_d} u}_{L^2(\nu)}^2 + \mu_{L^2}(\boldsymbol{x})\, \seminorm{\mkern-.5mu u-Q_{\mcal{V}_d} u}_{\boldsymbol{x}\mkern.5mu}^2
    \qquad\text{with}\qquad
    \mu_{L^2}(\boldsymbol{x}) = 
    \sup_{v\in\mcal{V}_d} \frac{\norm{v}_{L^2(\nu)}^2}{\seminorm{v}_{\boldsymbol{x}}^2} .
\end{equation}
The aim of many sampling methods (cf.~\cite{cohen_2017_optimal,haberstich2022boosted,Derezinski2022Jan,nouy2024weighted}) is to bound the quasi-optimality factor $\mu_{L^2}(\boldsymbol{x}) \le \mu_\star$ with high probability $p_\star$. With an optimal choice of the density  $w^{-1}$ based on concentration of measure arguments~\cite{Tropp2011}, we obtain that $n \gtrsim d \log(d)$ i.i.d. sample points suffice to achieve this.
In~\cite{chkifa2024randomizedleastsquaresminimaloversampling}, the authors propose a dependent sampling strategy that reduces the sampling complexity of least squares projection also to $\mathcal{O}(d)$.
However, it is not clear if one can sample efficiently from this distribution, especially in high dimension. 

Conditioned on the event $\mu_{L^2}(\boldsymbol{x}) \le \mu_\star$, it holds that
$$
    \mbb{E}\bracs*{\norm{u-Q_{\mcal{V}_d}^{\boldsymbol{x}} u}_{L^2(\nu)}^{2}}
    \le (1+ \tfrac{\mu_\star}{p_\star})\norm{u-Q_{\mcal{V}_d} u}_{L^2(\nu)}^2 .
$$
This error bound is similar to the bound of Theorem~\ref{thm:error_bound_noiseless}.
However, compared to the least squares approximation $Q_{\mcal{V}_d}^{\boldsymbol{x}}u$, our kernel-based approximation $P_{\mcal{V}_d}^{\boldsymbol{x}} u$ makes additional use of the RKHS structure to obtain several theoretical advantages:
\begin{itemize}
    \item in our setting, quasi-optimality holds with respect to stronger norms,
    \item the error bound of Theorem~\ref{thm:error_bound_noiseless} holds for every $\boldsymbol{x}\in\mathcal{X}^n$ and not only in expectation.
    This gives confidence to practitioners who want their approximation to be correct in all cases and not just in expectation.
    Moreover, these ``almost sure'' error bounds make it easy to reuse sample points while error bounds that hold in expectation must navigate the difficulties of statistical dependence,
    \item a critical practical difference is that the constant $\mu(\boldsymbol{x})$ is always computable while computing $\mu_{L^2}(\boldsymbol{x})$ requires access to an $L^2$-orthonormal basis.
    From a theoretical point of view, $\mu(\boldsymbol{x})$ has the delightful property that it always decreases when adding additional data points, while this is not guaranteed for $\mu_{L^2}(\boldsymbol{x})$.
\end{itemize}
There also exist results that do not  work in expectation or with high probability, but almost surely, just like ours.
However, these results are not quasi-optimal projections since the error on the right-hand side is measured in a stronger norm than $L^2(\nu)$.

The i.i.d.\ sampling strategy from our theory yields a suboptimal sample complexity when $\mathcal{V}_d$ is not spanned by the eigenfunctions associated with the kernel.
The well-known works~\cite{cohen_2017_optimal,haberstich2022boosted} achieve almost optimal sample complexities of $\mathcal{O}(d \log(d))$ with i.i.d.\ samples while requiring no more assumption than $\mcal{V}_d\subseteq L^2(\nu)$.
This contrasts starkly with the bounds derived in our work, which 
use a dependent sample and requires strong assumptions on both $\mcal{V}$ and $\mcal{V}_d$.

Error bounds for general point-evaluation based approximation algorithm in RKHS are analysed in~\cite{Krieg_2020}.
However, these bounds only hold for the space spanned by the eigenfunctions, and they are not quasi-optimal.
Similarly, the results of~\cite{belhadji2020kernel,belhadji2024signalreconstructionusingdeterminantal} only work for the space spanned by the eigenfunctions, limiting applicability.

A balance between the regularities of $L^2(\nu)$ and $\mathcal{V}$ can be provided by the space of uniformly bounded functions $\mathcal{L}^\infty(\mathcal{X})$ with norm $\|u\|_{\mathcal{L}^\infty(\mathcal{X})} = \sup_{x\in\mathcal{X}} |u(x)|$.
Fekete points are a choice of $n=d$ points satisfying $\|Q_{\mathcal{V}_d}^{\boldsymbol{x}} u\|_{\mathcal{L}^\infty(\mathcal{X})} \le \|u\|_{\mathcal{L}^\infty(\mathcal{X})}$ and ensure that the least squares projection satisfies the stability bound~\cite[Proposition~1.2.5]{Novak1988}
$$
    \|u - Q_{\mathcal{V}_d}^{\boldsymbol{x}} u\|_{\mathcal{L}^\infty(\mathcal{X})}
    \le (n+1) \inf_{v\in\mathcal{V}_d} \|u - v\|_{\mathcal{L}^\infty(\mathcal{X})}
    .
$$
However, computing Fekete points for general domains $\mathcal{X}$ and spaces $\mathcal{V}_d$ can be intractable.
Leja sequences offer a greedy alternative (with larger yet still polynomial in $d$ quasi-optimality factors) for polynomial spaces $\mathcal{V}_d$ over compact domains $\mathcal{X} \subseteq \mathbb{R}$ or $\mathcal{X} \subseteq\mathbb{C}$~\cite{Andrievskii2022,Chkifa2013,Cohen2015}.

\paragraph{Kernel interpolation.}
When approximating the function $u$ living in a RKHS $\mcal{V}$ using point evaluations at $\boldsymbol{x}\in\mcal{X}^n$, it is natural to consider the kernel interpolation $P_{\mcal{V}_{\boldsymbol{x}}} u$.
The advantage of the proposed method over kernel interpolation lies in 
the fact that we can introduce further knowledge about the function $u$
to reduce the error.
Suppose that $\mcal{X}\subseteq\mbb{R}^D$ is a bounded Lipschitz domain and that $\mcal{V} = H^s(\mcal{X})$ for some $s>\tfrac{D}{2}$.
Then classical convergence rates for kernel interpolation (cf.~\cite{Wendland_2004} or~\cite[Corollary~2.3.11]{kempf_thesis}) are given by
$$
    \norm{u - P_{\mcal{V}_{\boldsymbol{x}}} u}_{H^t(\mcal{X})} \lesssim n^{-(s-t)/D},
    \quad 0\le t\le s,
$$
for all $u\in H^s(\mcal{X})$.
While this rate is optimal for approximation in a $n$-dimensional subspace of $\mathcal{V}$ (due to the Kolmogorov widths of Sobolev spaces), this rate is always algebraic and suffers from the curse of dimensionality.
A method to determine a set of points $\boldsymbol{x}$ that reach the above rate 
is addressed in~\cite{belhadji2020kernel} by proposing a random strategy to choose $\boldsymbol{x}$ (see Theorem~\ref{thm:ayoub} below).

Often, however, additional information is available.
To illustrate this, suppose that the best approximations $P_{\mcal{V}_d} u$ of $u$ in a sequence of space $(\mcal{V}_d)_{d\in \mathbb{N}}$ admit a convergence rate of
$$
    \norm{u-P_{\mcal{V}_d} u}_{\mcal{V}} \lesssim d^{-s}
    \qquad\text{or even}\qquad
    \norm{u-P_{\mcal{V}_d} u}_{\mcal{V}} \lesssim e^{-rd}
$$
for some $r>0$.
Since kernel interpolation is restricted to approximation in the spaces $\mathcal{V}_{\boldsymbol{x}}$, it can not directly benefit from this additional information.

Under suitable assumptions, and when $\mathcal{V}_d$ is spanned by eigenfunctions of the integral operator associated to $k$,~\cite{belhadji2024signalreconstructionusingdeterminantal} construct a sample-based approximation $\hat{u}\in\mathcal{V}_d$ satisfying
$$
    \mathbb{E}\big[\|u - \hat{u}\|_{\mathcal{V}}^2\big] \le (1 + d) \|u - P_{\mathcal{V}_d}u\|_{\mathcal{V}}^2 .
$$

%
In contrast,
Theorem~\ref{thm:error_bound_noiseless} below ensures that every linear space $\mcal{V}_d$ admits a suboptimality constant $\mu_d(\boldsymbol{x})$ such that
\begin{equation}
\label{eq:quasi-optimality}
    \norm{u - P_{\mcal{V}_d}^{\boldsymbol{x}} u}_{\mcal{V}}^2 
    \le (1 + \mu_d(\boldsymbol{x})^2) \norm{u - P_{\mcal{V}_d} u}_{\mcal{V}}^2 ,
\end{equation}
providing additional flexibility.
We argue that we can choose $\boldsymbol{x}$   to make the quasi-optimality constant as small as desired. 
This discussion presumes that the approximation space $\mathcal{V}_d$ is fixed (e.g.\ given by approximation theoretic results) and that the point set $\boldsymbol{x}$ can be chosen freely.
If $\boldsymbol{x}$ is fixed, as is often the case in applications, it is conceivable to use the bounds presented in this paper to select a suitable subspace $\mathcal{V}_d$ of a larger space of candidate functions $\mathcal{V}_D$ by means of an eigenvalue decomposition of an empirical Gramian matrix (see Section~\ref{sec:discussion}).



\paragraph{Parameterised background data weak (PBDW) method.}

\begin{figure}
    \centering
    \begin{subfigure}[b]{0.32\textwidth}
        \centering
        \hspace*{-2.6em}
        \includegraphics[width=1.2\textwidth]{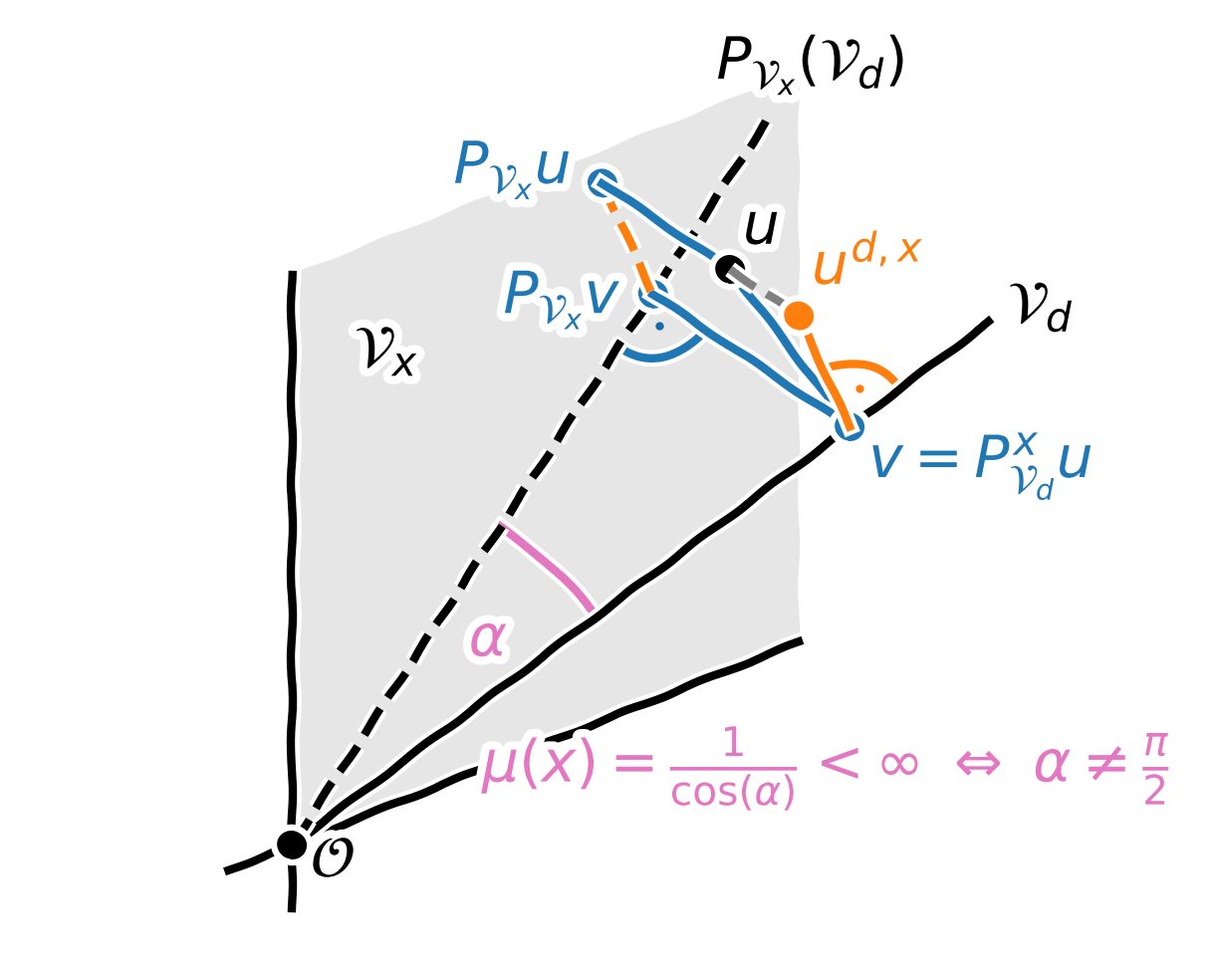}
        \caption{Linear $\mathcal{V}_d$ with bounded $\mu(x)$.}
        \label{fig:space_visualisation:linear}
    \end{subfigure}
    \hfill
    \begin{subfigure}[b]{0.32\textwidth}
        \centering
        \hspace*{-2em}
        \includegraphics[width=1.2\textwidth]{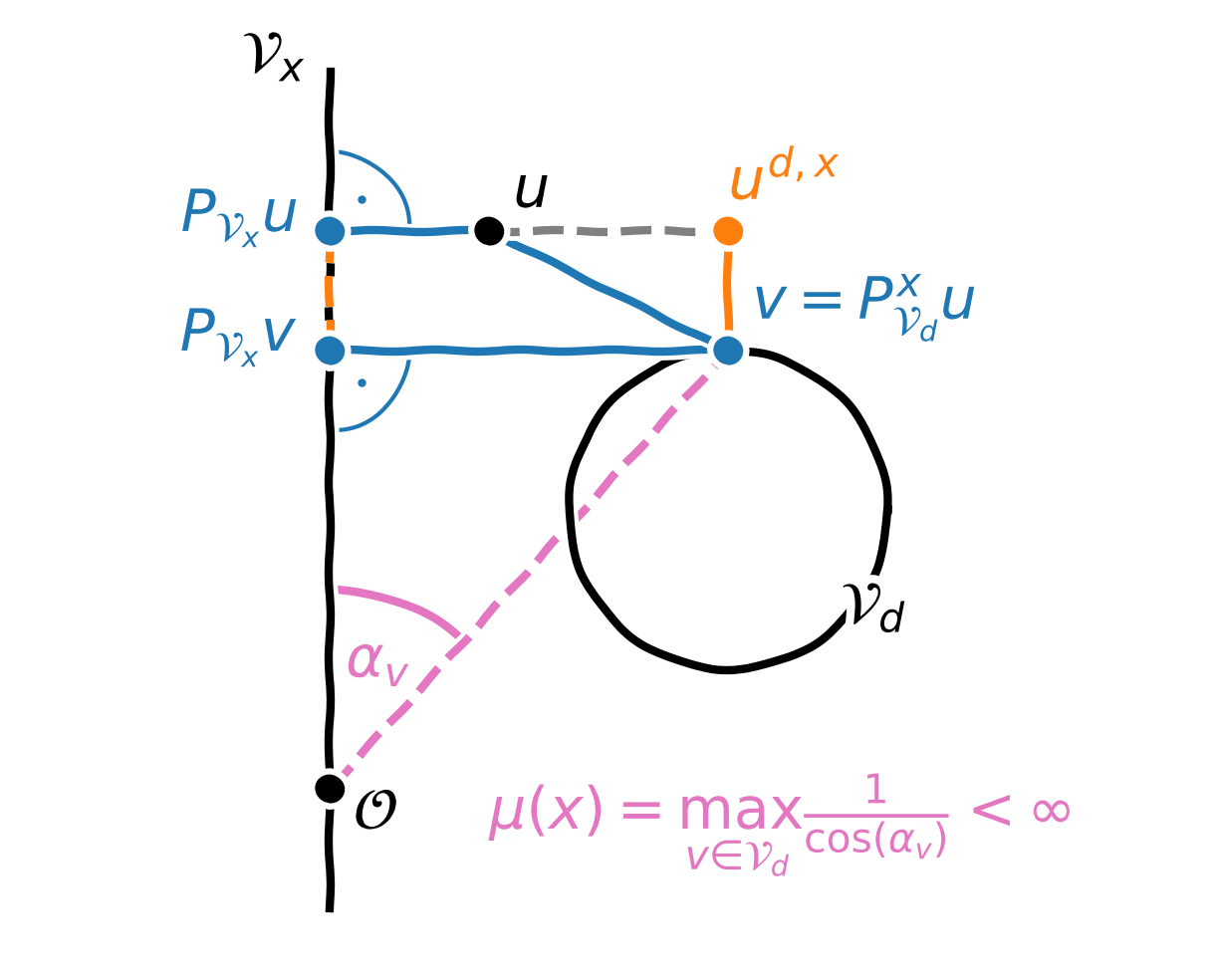}
        \caption{Nonlinear $\mathcal{V}_d$ with bounded $\mu(x)$.}
        \label{fig:space_visualisation:circle}
    \end{subfigure}
    \hfill
    \begin{subfigure}[b]{0.32\textwidth}
        \centering
        \hspace*{-2em}
        \includegraphics[width=1.2\textwidth]{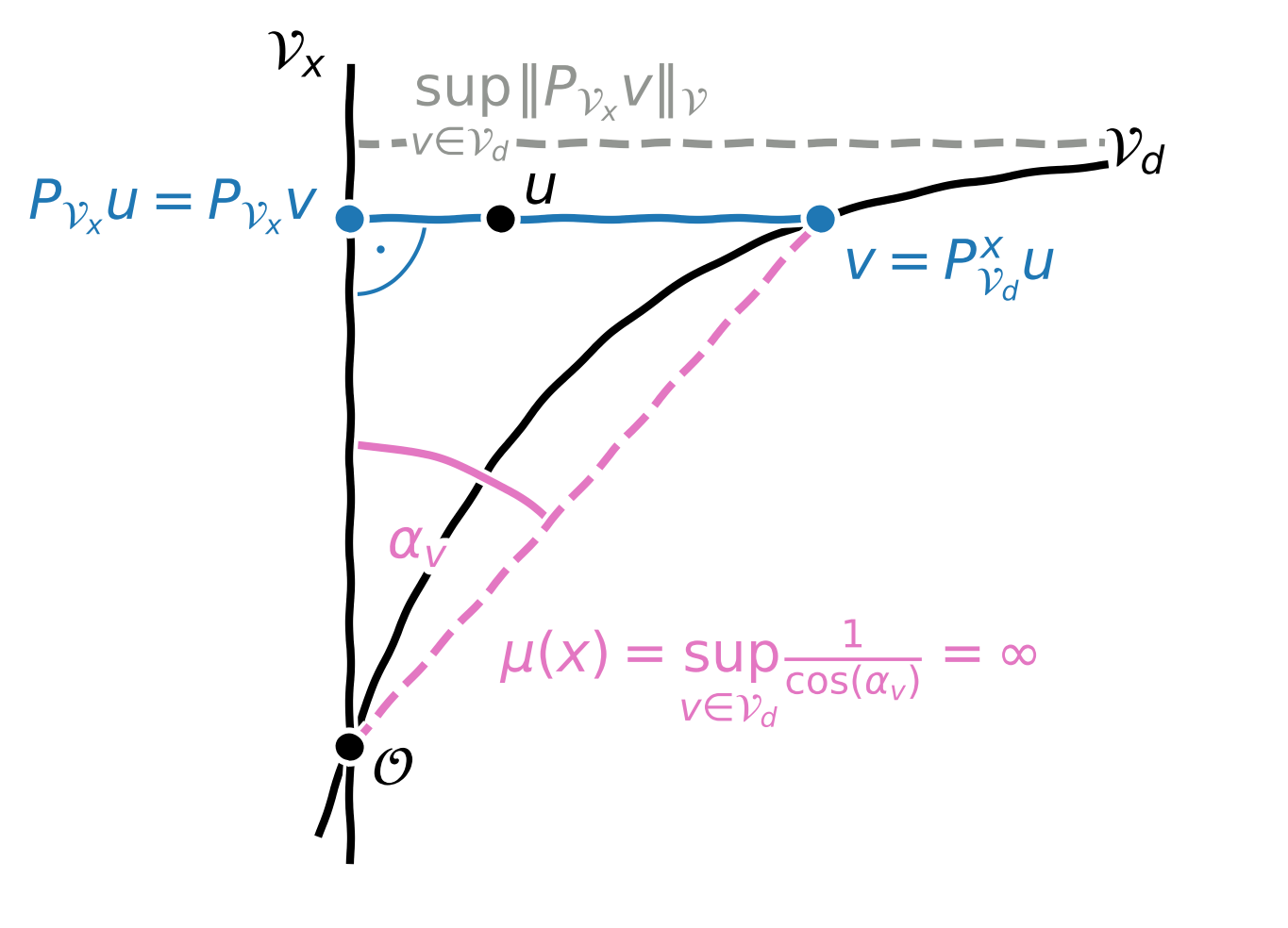}
        \caption{Nonlinar $\mathcal{V}_d$ with unbounded $\mu(x)$.}
        \label{fig:space_visualisation:diverging}
    \end{subfigure}
    \caption{
    Visualisation of projections involved in the PBDW method and relation of $\mu(x)$ to the angle between $\mathcal{V}_d$ and $\mathcal{V}_x$ for (\ref{fig:space_visualisation:linear}) linear $\mathcal{V}_d$, (\ref{fig:space_visualisation:circle}) nonlinear $\mathcal{V}_d$ with bounded $\mu(x)$ and (\ref{fig:space_visualisation:diverging}) nonlinear $\mathcal{V}_d$ with unbounded $\mu(x)$.
    Here $u^{d,\boldsymbol{x}}$ is the interpolant defined in~\eqref{eqn:def_u_dx}.
    }
    \label{fig:space_visualisation}
\end{figure}

The projector $P_{\mathcal{V}_d}^{\boldsymbol{x}}$ is a special case of the best-fit estimator introduced in~\cite{cohen2022nonlinear}.
In particular, Theorem~\ref{thm:error_bound_noiseless} below is a refined version of Theorem~2.3 in~\cite{cohen2022nonlinear}.
(Note that the general PBDW method works with nonlinear approximation spaces $\mathcal{V}_d$.)
The novelty in our approach is that we consider $P_{\mathcal{V}_d}^{\boldsymbol{x}}$ primarily as a tool for function approximation from point evaluations.
The norms and stability constants involved in our setting are explicitly computable, 
which is not possible in the general PBDW setting and might be very laborious and technical in special cases.

Since we have access to the projection $P_{\mathcal{V}_{\boldsymbol{x}}}$, it is always possible to obtain an interpolation $w = P_{\mathcal{V}_{\boldsymbol{x}}} u$, satisfying $\|P_{\mathcal{V}_{\boldsymbol{x}}}(u - w)\|_{\mathcal{V}}=0$.
Combining the approximation $P_{\mcal{V}_d}^{\boldsymbol{x}}u$ with a kernel interpolation of the residual $P_{\mcal{V}_{\boldsymbol{x}}} (I - P_{\mcal{V}_d}^{\boldsymbol{x}}) u$ gives rise to the PBDW method~\cite{Maday2015May,cohen2022nonlinear},
which defines the interpolant
\begin{align}
    \label{eqn:def_u_dx}
    u^{d,\boldsymbol{x}} := P_{\mcal{V}_d}^{\boldsymbol{x}} u + P_{\mcal{V}_{\boldsymbol{x}}} (I - P_{\mcal{V}_d}^{\boldsymbol{x}}) u .
\end{align}
Figure~\ref{fig:space_visualisation} illustrates the different projections $P_{\mathcal{V}_d}^{\boldsymbol{x}}u$, $P_{\mathcal{V}_{\boldsymbol{x}}}u$ and $u^{d,\boldsymbol{x}}$ as well as the connection between the stability constant $\mu(x)$ and the angle between approximation space $\mathcal{V}_d$ and kernel interpolation space $\mathcal{V}_x$.
Theorem~\ref{thm:error_bound_noiseless} and the $\mcal{V}$-orthogonality of the kernel interpolation operator $P_{\mcal{V}_{\boldsymbol{x}}}$ trivially yield the PBDW error bound
$$
    \norm{u - u^{d,\boldsymbol{x}}}_{\mcal{V}}
    \le \norm{u - P_{\mcal{V}_d}^{\boldsymbol{x}} u}_{\mcal{V}}
    + \norm{P_{\mcal{V}_{\boldsymbol{x}}} (I - P_{\mcal{V}_d}^{\boldsymbol{x}}) u}_{\mcal{V}}
    \le 2 \norm{u - P_{\mcal{V}_d}^{\boldsymbol{x}} u}_{\mcal{V}}
    \le 2 \sqrt{1 + \mu(\boldsymbol{x})^2\tau(\boldsymbol{x})^2} \norm{u - P_{\mcal{V}_d} u}_{\mcal{V}}
    .
$$
Note that this error bound is not optimal in general, since $u^{d,\boldsymbol{x}} \in (\mcal{V}_d + \mcal{V}_{\boldsymbol{x}})$ is compared to $P_{\mcal{V}_d}u \in \mcal{V}_d$.
Using intricate properties of the projectors, we can obtain the tighter bound~\cite{binev2015data}
\begin{align}
	\norm{u - u^{d,\boldsymbol{x}}}_{\mcal{V}}
	\le \mu(\boldsymbol{x}) \norm{u - P_{\mcal{V}_d\oplus(\mcal{V}_{\boldsymbol{x}}\cap \mathcal{V}_d^\perp)} u}_{\mcal{V}} .
\label{eq:PBDW_bound}
\end{align}
For the sake of completeness, we provide a simplified proof of this bound in the linear setting in Appendix~\ref{app:PBDW_bound}.

\paragraph{Optimal measurements selection.}%
The greedy Algorithm~\ref{alg:greedy} presented in section~\ref{sec:subsampling} is not the first attempt to optimise the constant $\mu(\boldsymbol{x})$.
In the more general context of approximating a function from bounded linear measurements,~\cite{Binev2018} introduced a greedy optimisation strategy for selecting optimal measurements.
This strategy, however, requires optimising over an infinite set of measurements and computing exact dual pairings, which is often infeasible in practice.

\section{Notations}
\label{sec:notation}

\begin{itemize}
    \item $\mcal{V}$ is a RKHS of functions on the set $\mcal{X}$.
    \item The inner product of $\mcal{V}$ is denoted by $(\bullet,\bullet)_{\mcal{V}}$ and the norm of $\mcal{V}$ is denoted by $\norm{\bullet}_{\mcal{V}}$.
    \item The reproducing kernel of $\mcal{V}$ is denoted by $k:\mcal{X}\times \mcal{X} \to \mbb{R}$.
    \item $\mcal{V}_d$ is a fixed, $d$-dimensional subspace of $\mcal{V}$.

    \item For any $\mcal{W}\subseteq\mcal{V}$ we let $P_{\mcal{W}}$ denote the $\mcal{V}$-orthogonal projection onto $\mcal{W}$.

    \item We identify a finite sequence of functions $b_1,\ldots,b_m : \mcal{X}\to\mcal{Y}$ with the vector-valued function $b : \mcal{X}\to\mcal{Y}^m$.

    \item It will be convenient to assign an arbitrary ordering to the point sets and view them as elements of $\mcal{X}^n$. The ordering has no influence on the theoretical arguments and is only used to simplify the notation.
    \item Given two point sets $\boldsymbol{x}\in\mcal{X}^n$ and $\boldsymbol{y}\in\mcal{X}^m$, we write $\boldsymbol{x}\oplus \boldsymbol{y}\in\mcal{X}^{n+m}$ as the concatenation of both sets.
    \item Given $\boldsymbol{x}\in\mcal{X}^n$ and $\boldsymbol{y}\in\mcal{X}^m$, we write $\boldsymbol{x} \subseteq \boldsymbol{y}$ if $n\le m$ and $\forall\,i\in\{1,\ldots n\}$, $\exists\,j\in\{1,\ldots,m\}$ such that $ \boldsymbol{y}_j = \boldsymbol{x}_i$.

    \item We define for any function $f : \mcal{X} \to \mcal{Y}$ and $\boldsymbol{x}\in\mcal{X}^n$ the vector of evaluations $f(\boldsymbol{x})\in\mcal{Y}^{n}$ by $f(\boldsymbol{x})_{k} := f(x_k)$.
    \item One important instance of this notation is the partial evaluation of the kernel $k$.
    Interpreting the kernel as a function $k : \mcal{X} \to \mbb{R}^{\mcal{X}}$, we can write $k(\boldsymbol{x}, \bullet) : \mcal{X} \to \mbb{R}^n$ for any $\boldsymbol{x} \in \mcal{X}^n$.

    \item Another important instance is the matrix of evaluations of a vector-valued function $f : \mcal{X} \to \mbb{R}^m$ on points $\boldsymbol{x}\in\mcal{X}^n$.
    Using the introduced notation we can interpret $f(\boldsymbol{x})\in(\mbb{R}^m)^n\simeq\mbb{R}^{m\times n}$ as $f(\boldsymbol{x})_{jk} := f(x_k)_j$. 
    The indices are ordered mnemonically. ``$f$'' comes before ``$\boldsymbol{x}$'' in ``$f(\boldsymbol{x})$'' and the index $j$ comes before $k$.

    \item   $\operatorname{span}(b) = \operatorname{span}\{b_1,\hdots , b_m\}$ denotes the linear space generated by $b : \mcal{X}\to\mbb{R}^m$. 
    \item $k(\boldsymbol{x}, \bullet)$ forms a generating system of the linear space
    $$
        \mcal{V}_{\boldsymbol{x}} := \operatorname{span}(k(\boldsymbol{x}, \bullet)) =  \operatorname{span}\{k(x_1,\bullet) , \hdots , k(x_n,\bullet)\}.
    $$
    \item Due to its significance, we define the special notation
    $$
        K(\boldsymbol{x}) := k(\boldsymbol{x}, \boldsymbol{x}) \in \mbb{R}^{n\times n},
    $$
    which is not only the evaluation of $k(\boldsymbol{x}, \bullet)$ at the point set $\boldsymbol{x}$ but also the Gramian matrix (or kernel matrix)
    $$
        K(\boldsymbol{x})_{jk}
        = (k(x_j, \bullet), k(x_k, \bullet))_{\mcal{V}} .
    $$
    \item Let $b : \mcal{X}\to\mbb{R}^d$ be a $\mcal{V}$-orthonormal basis of $\mcal{V}_d$.
    Then the reproducing kernel for $\mcal{V}_d$ is given by
    $$
        k_d(x, y)
        := ((P_{\mcal{V}_d}\otimes P_{\mcal{V}_d}) k)(x,y)
        = b(x)^\intercal b(y) .
    $$
    We denote the corresponding kernel matrix by $K_d(\boldsymbol{x})$.
    \item For all matrices $M\in\mathbb{R}^{m\times m}$ we denote by $M^+$ the Moore--Penrose pseudo-inverse of $M$.
    \item To simplify notation, we call any finite measure $\mu$ an unnormalised probability measure and write $x\sim\mu$ to denote a sample from the corresponding probability measure.
    \item For any two sequences $a,b$ we write $a \lesssim b$ if there is a constant $c > 0$ such that, element-wise, $a \le c b$.
    \item We write $a \asymp b$ if $b \lesssim a\lesssim b$.
\end{itemize}
\section{Exact evaluations}
\label{sec:noiseless}

Given a point set $\boldsymbol{x} := (x_1, \ldots, x_n)\in\mathcal{X}^n$, we let $P_{\mcal{V}_{\boldsymbol{x}}}$ denote the $\mcal{V}$-orthogonal projection onto $\mcal{V}_{\boldsymbol{x}} = \operatorname{span}(k(\boldsymbol{x},\bullet))$, and  
we  define the (semi-)inner product and its induced (semi-)norm by
$$
    \pars{\bullet, \bullet}_{\boldsymbol{x}} := \pars{P_{\mcal{V}_{\boldsymbol{x}}}\bullet, \bullet}_{\mcal{V}}
    \qquad\text{and}\qquad
    \|\bullet\|_{\boldsymbol{x}}
    := \norm{P_{\mcal{V}_{\boldsymbol{x}}} \bullet}_{\mcal{V}} .
$$
We let $P_{\mcal{V}_d}^{\boldsymbol{x}}$ denote the orthogonal projection onto $\mcal{V}_d\subseteq\mcal{V}$ with respect to the (semi-)inner-product $\pars{\bullet, \bullet}_{\boldsymbol{x}}$, defined for 
$u\in\mcal{V}$ by
\begin{equation}
\label{eq:def:PVdx}
    P_{\mcal{V}_d}^{\boldsymbol{x}} u := \argmin_{v\in\mcal{V}_d} \; \norm{u - v}_{\boldsymbol{x}} .
\end{equation}
The minimiser is uniquely defined when the constant
\begin{equation}
\label{eq:def:mu}
    \mu(\boldsymbol{x})
    := \max_{v\in\mathcal{V}_d} \frac{\|v\|_{\mathcal{V}}}{\|v\|_{\boldsymbol{x}}}
\end{equation}
is finite.
The subsequent three lemmas shows that the projection and the constant $\mu(\boldsymbol{x})$ are easily computable.

\begin{lemma}
\label{lem:x_inner_matrix}
    Let $v, w\in\mcal{V}$ and $\boldsymbol{x}\in\mcal{X}^n$.
    Then $\pars{v, w}_{\boldsymbol{x}} = v\pars{\boldsymbol{x}}^\intercal K(\boldsymbol{x})^{+} w\pars{\boldsymbol{x}}$.
\end{lemma}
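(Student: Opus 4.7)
The plan is to compute $P_{\mcal{V}_{\boldsymbol{x}}} v$ explicitly in the generating system $k(x_1, \bullet), \ldots, k(x_n, \bullet)$ of $\mcal{V}_{\boldsymbol{x}}$ and then use the reproducing property $(k(x_j, \bullet), w)_{\mcal{V}} = w(x_j)$ to evaluate the inner product.

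First, I would characterise $P_{\mcal{V}_{\boldsymbol{x}}} v$ via the normal equations. Writing $P_{\mcal{V}_{\boldsymbol{x}}} v = \sum_{i=1}^n c_i k(x_i, \bullet)$ for some coefficient vector $c \in \mbb{R}^n$, orthogonality of $v - P_{\mcal{V}_{\boldsymbol{x}}}v$ to each generator $k(x_j,\bullet)$ combined with the reproducing property yields, for every $j$,
\begin{equation*}
    \sum_{i=1}^n K(\boldsymbol{x})_{ji}\, c_i \;=\; (P_{\mcal{V}_{\boldsymbol{x}}} v, k(x_j,\bullet))_{\mcal{V}} \;=\; v(x_j),
\end{equation*}
so that $c$ must satisfy the linear system $K(\boldsymbol{x})\, c = v(\boldsymbol{x})$.

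The key obstacle is that $K(\boldsymbol{x})$ may be singular when the $k(x_i,\bullet)$ are linearly dependent, so this system need not have a unique solution, and we have to justify the use of the Moore--Penrose pseudo-inverse. The central observation is that the system is nonetheless always consistent: if $K(\boldsymbol{x}) z = 0$, then $\|\sum_i z_i k(x_i,\bullet)\|_{\mcal{V}}^2 = z^\intercal K(\boldsymbol{x}) z = 0$, whence $\sum_i z_i k(x_i,\bullet) = 0$ and therefore $z^\intercal v(\boldsymbol{x}) = \sum_i z_i v(x_i) = (\sum_i z_i k(x_i,\bullet), v)_{\mcal{V}} = 0$. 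This shows $v(\boldsymbol{x}) \in \ker(K(\boldsymbol{x}))^\perp = \operatorname{range}(K(\boldsymbol{x}))$, so $c := K(\boldsymbol{x})^+ v(\boldsymbol{x})$ solves the normal equations (even though different solutions $c$ yield the same element $P_{\mcal{V}_{\boldsymbol{x}}} v \in \mcal{V}_{\boldsymbol{x}}$).

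Finally, substituting this representation and using the reproducing property once more,
\begin{equation*}
    (v,w)_{\boldsymbol{x}} \;=\; (P_{\mcal{V}_{\boldsymbol{x}}} v, w)_{\mcal{V}} \;=\; \sum_{i=1}^n c_i\, (k(x_i,\bullet), w)_{\mcal{V}} \;=\; c^\intercal w(\boldsymbol{x}) \;=\; v(\boldsymbol{x})^\intercal (K(\boldsymbol{x})^+)^\intercal w(\boldsymbol{x}).
\end{equation*}
Since $K(\boldsymbol{x})$ is symmetric, so is $K(\boldsymbol{x})^+$, and the identity $(v,w)_{\boldsymbol{x}} = v(\boldsymbol{x})^\intercal K(\boldsymbol{x})^+ w(\boldsymbol{x})$ follows. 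The independence of the right-hand side from the particular choice of $c$ (only $K(\boldsymbol{x})^+ v(\boldsymbol{x})$ enters) is consistent with the fact that the image $P_{\mcal{V}_{\boldsymbol{x}}} v$, and hence $(v,w)_{\boldsymbol{x}}$, is uniquely determined.
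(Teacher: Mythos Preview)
Your proof is correct and follows essentially the same route as the paper: express $P_{\mcal{V}_{\boldsymbol{x}}} v$ in the generating system $k(\boldsymbol{x},\bullet)$ with coefficient vector $K(\boldsymbol{x})^{+}v(\boldsymbol{x})$, then evaluate the inner product via the reproducing property. Your version is in fact slightly tidier: you justify consistency of the normal equations in the singular case (which the paper takes for granted), and by using $(v,w)_{\boldsymbol{x}} = (P_{\mcal{V}_{\boldsymbol{x}}}v, w)_{\mcal{V}}$ directly you avoid the paper's detour through $(P_{\mcal{V}_{\boldsymbol{x}}}v, P_{\mcal{V}_{\boldsymbol{x}}}w)_{\mcal{V}}$ and the subsequent simplification $K^{+}KK^{+}=K^{+}$.
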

\begin{proof}
    Since $P_{\mcal{V}_{\boldsymbol{x}}} v = v(\boldsymbol{x})^\intercal K(\boldsymbol{x})^{+} k(\boldsymbol{x}, \bullet) = \sum_{j=1}^n \pars*{K(\boldsymbol{x})^{+} v(\boldsymbol{x}) }_j k(x_j, \bullet)$, it follows that
    \begin{align}
        (v, w)_{\boldsymbol{x}}
        &= (P_{\mcal{V}_{\boldsymbol{x}}} v, P_{\mcal{V}_{\boldsymbol{x}}} w)_{\mcal{V}} \\
        &= \sum_{j,k=1}^n  (K(\boldsymbol{x})^{+} v(\boldsymbol{x}))_j  (k(x_j, \bullet), k(x_k, \bullet))_{\mcal{V}} (K(\boldsymbol{x})^{+} w(\boldsymbol{x}))_k \\
        &= v(\boldsymbol{x})^\intercal K(\boldsymbol{x})^{+} K(\boldsymbol{x}) K(\boldsymbol{x})^{+} w(\boldsymbol{x}) \\
        &= v(\boldsymbol{x})^\intercal K(\boldsymbol{x})^{+} w(\boldsymbol{x}). \qedhere
    \end{align}
\end{proof}

\begin{lemma}
\label{lem:mu_lmin}
    Let $\boldsymbol{x}\in\mcal{X}^n$ and define the Gramian matrix
    \begin{equation}
        \label{eq:def:Gramian_x}
        G^{\boldsymbol{x}}_{jk} 
        := (b_j, b_k)_{\boldsymbol{x}} 
        = (b(\boldsymbol{x})K(\boldsymbol{x})^+ b(\boldsymbol{x})^\intercal)_{jk}.
    \end{equation}
    It holds that $\mu(\boldsymbol{x}) = \lambda_{\mathrm{min}}(G^{\boldsymbol{x}})^{-1/2}$.
\end{lemma}
\begin{proof}
    Since every $v\in\mcal{V}_d$ can be written as $v(x) = c^\intercal b(x)$, Lemma~\ref{lem:x_inner_matrix} implies
    \begin{align}
        \mu(\boldsymbol{x})^2
        &= \max_{v\in\mathcal{V}_d} \frac{\|v\|_{\mathcal{V}}^2}{\|v\|_{\boldsymbol{x}}^2}
        = \pars*{\min_{c\in\mathbb{R}^d} \frac{\|c^\intercal b\|_{\boldsymbol{x}}^2}{\|c^\intercal b\|_{\mathcal{V}}^2}}^{-1}
        = \pars*{
        \min_{c\in\mathbb{R}^d} \frac{c^\intercal b(\boldsymbol{x}) K(\boldsymbol{x})^+ b(\boldsymbol{x})^\intercal c}{\|c\|_2^2}
        }^{-1}
        = \lambda_{\mathrm{min}}(G^{\boldsymbol{x}})^{-1} .
        \qedhere
    \end{align}
\end{proof}

\begin{lemma}
    Suppose that $\mu(\boldsymbol{x}) < \infty$.
    Then the projection $P_{\mathcal{V}_d}^{\boldsymbol{x}}$ is well-defined and satisfies
    $$
        P_{\mathcal{V}_d}^{\boldsymbol{x}}u = b^\intercal (G^{\boldsymbol{x}})^{-1} b(\boldsymbol{x}) K(\boldsymbol{x})^+ u(\boldsymbol{x}) \,.
    $$
\end{lemma}
\begin{proof}
    Any $v\in\mathcal{V}_d$ can be written as $v = b^\intercal \boldsymbol{v}$ for a unique $\boldsymbol{v}\in\mathbb{R}^d$.
    Lemma~\ref{lem:x_inner_matrix} thus implies for every $u\in\mathcal{V}$ that
    \begin{align}
        \|u - v\|_{\boldsymbol{x}}^2
        = \|u\|_{\boldsymbol{x}}^2
        -2u(\boldsymbol{x})^\intercal K(\boldsymbol{x})^+ b(\boldsymbol{x})^\intercal \boldsymbol{v}
        + \boldsymbol{v}^\intercal b(\boldsymbol{x}) K(\boldsymbol{x})^+ b(\boldsymbol{x})^\intercal \boldsymbol{v} \,.
    \end{align}
    Using the definition of the Gramian~\eqref{eq:def:Gramian_x}, the first order optimality condition for $\boldsymbol{v}$ can be written as
    $$
        G^{\boldsymbol{x}} \boldsymbol{v}
        = b(\boldsymbol{x}) K(\boldsymbol{x})^+ u(\boldsymbol{x})
        \,.
    $$
    Since $\mu(\boldsymbol{x}) < \infty$, Lemma~\ref{lem:mu_lmin} ensures that $G^{\boldsymbol{x}}$ is invertible.
    The claim follows by solving for $\boldsymbol{v}$ and using $v = b^\intercal \boldsymbol{v}$.
\end{proof}

The following theorem provides error bounds for the projection~\eqref{eq:def:PVdx} in terms of the constant $\mu(\boldsymbol{x})$ from~\eqref{eq:def:mu}.
It tightens the error bound for this type of approximation, which was first presented in Theorem~2.3 of~\cite{cohen2022nonlinear}.

In contrast to similar error bounds for least squares projections (cf.~\eqref{eq:def:mu_L2}), the key ingredient of the subsequent error bound is the property $\|\bullet\|_{\boldsymbol{x}}\le \|\bullet\|_{\mathcal{V}}$, which allows us to obtain true quasi-optimality.
The norm $\|\bullet\|_{\boldsymbol{x}}$ can also be seen as an instance of the measurement by random projectors, which is studied in~\cite{adcock2023unified,gruhlke2024optimal,eigel_convergence_2020}.

\begin{minipage}{\textwidth}
\begin{theorem}
\label{thm:error_bound_noiseless}
    Let $u\in\mcal{V}$, $\boldsymbol{x}\in\mcal{X}^n$ and suppose that $\mu(\boldsymbol{x}) < \infty$.
    Then
    $$
        \norm{(I - P_{\mcal{V}_d}^{\boldsymbol{x}}) u}_{\mcal{V}}^2 
        \le (1 + \mu(\boldsymbol{x})^2 \tau(\boldsymbol{x})^2) \norm{(I - P_{\mcal{V}_d}) u}_{\mcal{V}}^2 ,
    $$
    with the constant $\tau(\boldsymbol{x}) := \min\braces{\norm{I - G^{\boldsymbol{x}}}_{\mathrm{Fro}} + \norm{I - G^{\boldsymbol{x}}}_{2}, 1}$.
\end{theorem}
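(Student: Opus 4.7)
The plan for the identity $\mu(\boldsymbol{x}) = \lambda_{\min}(G^{\boldsymbol{x}})^{-1/2}$ is a one-line expansion in the $\mcal{V}$-orthonormal basis $b$: any $v \in \mcal{V}_d$ reads $v = b^\intercal c$, giving $\|v\|_{\mcal{V}}^2 = c^\intercal c$ and, by Lemma~\ref{lem:x_inner_matrix} applied with $w = v$, $\|v\|_{\boldsymbol{x}}^2 = c^\intercal G^{\boldsymbol{x}} c$. The Rayleigh quotient characterisation then yields $\mu(\boldsymbol{x})^2 = \sup_{c\ne 0} c^\intercal c / c^\intercal G^{\boldsymbol{x}} c = \lambda_{\min}(G^{\boldsymbol{x}})^{-1}$.

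For the error bound, the first step is to decouple the best-approximation error $e := u - P_{\mcal{V}_d} u \in \mcal{V}_d^\perp$ from the projection error via Pythagoras in $\mcal{V}$. Writing $u_d := P_{\mcal{V}_d} u$ and $u_d^{\boldsymbol{x}} := P_{\mcal{V}_d}^{\boldsymbol{x}} u$, the inclusions $u_d - u_d^{\boldsymbol{x}} \in \mcal{V}_d$ and $e \in \mcal{V}_d^\perp$ give $\|(I - P_{\mcal{V}_d}^{\boldsymbol{x}}) u\|_{\mcal{V}}^2 = \|e\|_{\mcal{V}}^2 + \|u_d - u_d^{\boldsymbol{x}}\|_{\mcal{V}}^2$, reducing the target to $\|u_d - u_d^{\boldsymbol{x}}\|_{\mcal{V}} \le \mu(\boldsymbol{x}) \tau(\boldsymbol{x}) \|e\|_{\mcal{V}}$. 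Because $u_d - u_d^{\boldsymbol{x}} \in \mcal{V}_d$, the definition of $\mu$ immediately yields $\|u_d - u_d^{\boldsymbol{x}}\|_{\mcal{V}} \le \mu(\boldsymbol{x}) \|u_d - u_d^{\boldsymbol{x}}\|_{\boldsymbol{x}}$, and since $P_{\mcal{V}_d}^{\boldsymbol{x}} u_d = u_d$ we have the useful identity $u_d - u_d^{\boldsymbol{x}} = -P_{\mcal{V}_d}^{\boldsymbol{x}} e$. The trivial bound $\tau \le 1$ then follows from contractivity of the $\boldsymbol{x}$-orthogonal projection together with $\|e\|_{\boldsymbol{x}} = \|P_{\mcal{V}_{\boldsymbol{x}}} e\|_{\mcal{V}} \le \|e\|_{\mcal{V}}$.

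For the finer bound involving $\|I - G^{\boldsymbol{x}}\|_{\mathrm{Fro}} + \|I - G^{\boldsymbol{x}}\|_2$, the plan is to exploit the $\mcal{V}$-orthogonality $e \perp \mcal{V}_d$ so that, for any $v \in \mcal{V}_d$, the vanishing of $(e,v)_{\mcal{V}}$ gives $(e, v)_{\boldsymbol{x}} = (e, P_{\mcal{V}_{\boldsymbol{x}}} v)_{\mcal{V}} = (e, (I - P_{\mcal{V}_d}) P_{\mcal{V}_{\boldsymbol{x}}} v)_{\mcal{V}}$. Setting $\psi_j := (I - P_{\mcal{V}_d}) P_{\mcal{V}_{\boldsymbol{x}}} b_j \in \mcal{V}_d^\perp$ and $\alpha_j := (\psi_j, e)_{\mcal{V}}$, solving the $\boldsymbol{x}$-normal equations for $u_d^{\boldsymbol{x}}$ in coordinates produces $u_d - u_d^{\boldsymbol{x}} = -b^\intercal (G^{\boldsymbol{x}})^{-1} \alpha$, hence $\|u_d - u_d^{\boldsymbol{x}}\|_{\boldsymbol{x}}^2 = \alpha^\intercal (G^{\boldsymbol{x}})^{-1} \alpha$. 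A direct calculation using $(P_{\mcal{V}_d} P_{\mcal{V}_{\boldsymbol{x}}} b_j, P_{\mcal{V}_d} P_{\mcal{V}_{\boldsymbol{x}}} b_k)_{\mcal{V}} = (G^{\boldsymbol{x}})^2_{jk}$ and $(P_{\mcal{V}_{\boldsymbol{x}}} b_j, P_{\mcal{V}_{\boldsymbol{x}}} b_k)_{\mcal{V}} = G^{\boldsymbol{x}}_{jk}$ shows that the Gramian of $\{\psi_j\}$ equals $G^{\boldsymbol{x}}(I - G^{\boldsymbol{x}})$, which immediately yields $\|\alpha\|^2 \le \|G^{\boldsymbol{x}}(I - G^{\boldsymbol{x}})\|_2 \|e\|_{\mcal{V}}^2 \le \|I - G^{\boldsymbol{x}}\|_2 \|e\|_{\mcal{V}}^2$.

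The hard part, I expect, is combining these identities with a careful splitting of $(G^{\boldsymbol{x}})^{-1} = I + ((G^{\boldsymbol{x}})^{-1} - I)$ so that the operator and Frobenius norms of $I - G^{\boldsymbol{x}}$ appear additively in $\tau$. A naive estimate $\alpha^\intercal (G^{\boldsymbol{x}})^{-1}\alpha \le \|(G^{\boldsymbol{x}})^{-1}\|_2 \|\alpha\|^2$ gives only a spectral-norm contribution and discards the finer information contained in the off-diagonal entries of $I - G^{\boldsymbol{x}}$. The factorisation $(G^{\boldsymbol{x}})^{-1} - I = (G^{\boldsymbol{x}})^{-1}(I - G^{\boldsymbol{x}})$ together with the Frobenius-norm inequality $\|Ax\| \le \|A\|_{\mathrm{Fro}} \|x\|$ applied to the correction term, combined with the spectral bound $\|\alpha\| \le \|I - G^{\boldsymbol{x}}\|_2^{1/2} \|e\|_{\mcal{V}}$ for the leading term, should deliver the desired sum after triangle inequality.
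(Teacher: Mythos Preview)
Everything through the bound $\tau(\boldsymbol{x})\le 1$ matches the paper's argument verbatim: the same Rayleigh-quotient identity for $\mu$, the same Pythagoras split, the same use of $P_{\mcal{V}_d}^{\boldsymbol{x}}u_d=u_d$, and the same contractivity estimate $\|P_{\mcal{V}_d}^{\boldsymbol{x}}e\|_{\boldsymbol{x}}\le\|e\|_{\boldsymbol{x}}\le\|e\|_{\mcal{V}}$.

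For the refined bound your coordinate route via $\psi_j=(I-P_{\mcal{V}_d})P_{\mcal{V}_{\boldsymbol{x}}}b_j$ and the $\boldsymbol{x}$-normal equations is genuinely different from the paper's. The paper (Lemma~\ref{lem:tau_bound}) never writes normal equations; it splits the operator $A:=P_{\mcal{V}_{\boldsymbol{x}}}P_{\mcal{V}_d}^{\boldsymbol{x}}(I-P_{\mcal{V}_d})$ as $A_1+A_2$ with $A_1=P_{\mcal{V}_{\boldsymbol{x}}}(P_{\mcal{V}_d}^{\boldsymbol{x}}-P_{\mcal{V}_d})P_{\mcal{V}_{\boldsymbol{x}}}$ and $A_2=P_{\mcal{V}_{\boldsymbol{x}}}P_{\mcal{V}_d}(P_{\mcal{V}_{\boldsymbol{x}}}-I)$, then reduces $\|A_1\|$ to a trace computation over $\mcal{V}_{\boldsymbol{x}}$ (producing $\|I-G^{\boldsymbol{x}}\|_{\mathrm{Fro}}$) and $\|A_2\|$ to an eigenvalue computation over $\mcal{V}_d$ (producing $\|I-G^{\boldsymbol{x}}\|_2$). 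The additive structure comes from the triangle inequality at the operator level, \emph{before} any supremum over $e$ is taken.

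Your last paragraph is the gap. The splitting $(G^{\boldsymbol{x}})^{-1}=I+(G^{\boldsymbol{x}})^{-1}(I-G^{\boldsymbol{x}})$ applied to $\alpha^\intercal(G^{\boldsymbol{x}})^{-1}\alpha$ cannot deliver the additive form: the leading term is $\|\alpha\|^2\le\|I-G^{\boldsymbol{x}}\|_2\,\|e\|^2$, contributing $\|I-G^{\boldsymbol{x}}\|_2^{1/2}$ with a square root you cannot shed, and the correction still carries the same $\|\alpha\|$ factor rather than $\|e\|$. In fact your identities already give the \emph{exact} value of the quantity you are bounding. Writing $\alpha=Te$ with $TT^*=G^{\boldsymbol{x}}(I-G^{\boldsymbol{x}})$ and using that $G^{\boldsymbol{x}}$ commutes with $I-G^{\boldsymbol{x}}$,
\[
\sup_{e\in\mcal{V}_d^\perp}\frac{\alpha^\intercal(G^{\boldsymbol{x}})^{-1}\alpha}{\|e\|_{\mcal{V}}^2}
=\big\|(G^{\boldsymbol{x}})^{-1/2}TT^*(G^{\boldsymbol{x}})^{-1/2}\big\|_2
=\|I-G^{\boldsymbol{x}}\|_2,
\]
so $\|A\|_{\mcal{V}\to\mcal{V}}=\|I-G^{\boldsymbol{x}}\|_2^{1/2}$ exactly. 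This is not the paper's $\|I-G^{\boldsymbol{x}}\|_{\mathrm{Fro}}+\|I-G^{\boldsymbol{x}}\|_2$, and the two are incomparable (for $d=1$ with $G^{\boldsymbol{x}}=g$ one has $\|A\|=\sqrt{1-g}$, which exceeds $2(1-g)$ whenever $g>3/4$). Consequently no manipulation of the scalar $\alpha^\intercal(G^{\boldsymbol{x}})^{-1}\alpha$ alone will reproduce the paper's additive bound; to obtain that specific form you would have to split at the operator level, as the paper does, rather than after passing to coordinates.
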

\end{minipage}

\begin{proof}
    Since $P^{\boldsymbol{x}}_{\mcal{V}_d}$ is a projection, it holds that
    \begin{align}
        \norm{(I - P_{\mcal{V}_d}^{\boldsymbol{x}}) u}_{\mcal{V}}^2
        &= \norm{(I - P_{\mcal{V}_d}) u}_{\mcal{V}}^2 + \norm{(P_{\mcal{V}_d} - P_{\mcal{V}_d}^{\boldsymbol{x}}) u}_{\mcal{V}}^2 \\
        &\le \norm{(I - P_{\mcal{V}_d}) u}_{\mcal{V}}^2 + \mu(\boldsymbol{x})^2\norm{(P_{\mcal{V}_d}^{\boldsymbol{x}} - P_{\mcal{V}_d}) u}_{\boldsymbol{x}}^2 \\
        &= \norm{(I - P_{\mcal{V}_d}) u}_{\mcal{V}}^2 + \mu(\boldsymbol{x})^2\norm{P_{\mcal{V}_{\boldsymbol{x}}} P_{\mcal{V}_d}^{\boldsymbol{x}} (I - P_{\mcal{V}_d}) u}_{\mcal{V}}^2 \\
        &\le \norm{(I - P_{\mcal{V}_d}) u}_{\mcal{V}}^2 + \mu(\boldsymbol{x})^2\norm{P_{\mcal{V}_{\boldsymbol{x}}} P_{\mcal{V}_d}^{\boldsymbol{x}} (I - P_{\mcal{V}_d})}_{\mcal{V}\to\mcal{V}}^2 \norm{(I - P_{\mcal{V}_d}) u}_{\mcal{V}}^2
        .
    \end{align}
    Defining the operator $A = P_{\mcal{V}_{\boldsymbol{x}}} P_{\mcal{V}_d}^{\boldsymbol{x}} (I - P_{\mcal{V}_d})$, it remains to bound $\norm{A}_{\mcal{V}\to\mcal{V}} \le \tau(\boldsymbol{x})$.
    Since $P_{\mcal{V}_{d}}^{\boldsymbol{x}}$ is an $(\bullet,\bullet)_{\boldsymbol{x}}$-orthogonal projection and $P_{\mcal{V}_{d}}$ is a $\mcal{V}$-orthogonal projection, it holds for every $v\in\mcal{V}$ that
    $$
        \norm{P_{\mcal{V}_{\boldsymbol{x}}} P_{\mcal{V}_d}^{\boldsymbol{x}} (I - P_{\mcal{V}_d}) v}_{\mcal{V}}
        = \norm{P_{\mcal{V}_d}^{\boldsymbol{x}} (I - P_{\mcal{V}_d}) v}_{\boldsymbol{x}}
        \le \norm{(I - P_{\mcal{V}_d}) v}_{\boldsymbol{x}}
        \le \norm{(I - P_{\mcal{V}_d}) v}_{\mcal{V}}
        \le \norm{v}_{\mcal{V}}
        .
    $$
    This implies $\norm{A}_{\mcal{V}\to\mcal{V}} \le 1$.
    Deriving the bound $\norm{A}_{\mcal{V}\to\mcal{V}} \le \norm{I - G^{\boldsymbol{x}}}_{\mathrm{Fro}} + \norm{I - G^{\boldsymbol{x}}}_{2}$ is more involved and therefore deferred to the Lemma~\ref{lem:tau_bound} in Appendix~\ref{app:tau_bound}.
\end{proof}

\begin{minipage}{\textwidth}
\begin{remark}
\label{rmk:cross-gramian}
    It was already noted in~\cite[Remark~2.13]{binev2015data} that $\mu(\boldsymbol{x})$ can be computed as the inverse of the $d$\textsuperscript{th} largest singular value of the cross-Gramian matrix $H^{\boldsymbol{x}}_{jk} := (b_j, \omega_k)_{\mcal{V}}$, where $b_1,\ldots,b_d$ is a $\mcal{V}$-orthonormal basis of $\mcal{V}_d$ and $\omega_1, \ldots, \omega_n$ is a $\mcal{V}$-orthonormal basis of $\mcal{V}_{\boldsymbol{x}}$.
    This is equivalent to Lemma~\ref{lem:mu_lmin}.
    When $K(\boldsymbol{x}) = U\Lambda U^\intercal$ is the rank-revealing spectral decomposition with $U\in\mathbb{R}^{n\times r}$ and $\Lambda\in\mathbb{R}^{r\times r}$, the basis $\omega_1,\ldots,\omega_n$ can be defined by $\omega = \Lambda^{-1/2} U^\intercal k(\boldsymbol{x}, \bullet)$.
    The smallest singular value of $H^{\boldsymbol{x}}$ is given by the square root of the smallest eigenvalue of $H^{\boldsymbol{x}} (H^{\boldsymbol{x}})^\intercal = G^{\boldsymbol{x}}$.
    Given that $n\ge d$, we can w.l.o.g.\ assume that 
    $$
        (b_j, \omega_k)_{\mathcal{V}} = \beta_j \delta_{jk}
    $$
    with $1 \ge \beta_1 \ge \ldots\ge \beta_d \ge 0$ and $\beta_{d+1} = \beta_{d+2} = \ldots = 0$.
    This shows that $\lambda_{\mathrm{min}}(G^{\boldsymbol{x}}) = \beta_d^2$ is maximised when $\mathcal{V}_d$ can be well approximated by $\mathcal{V}_{\boldsymbol{x}}$.
\end{remark}
\end{minipage}

\section{Controlling the error}
\label{sec:error_control}

Theorem~\ref{thm:error_bound_noiseless} implies that to control the error of the projection $P_{\mcal{V}_d}^{\boldsymbol{x}} u$, it is sufficient to control the constant $\mu(\boldsymbol{x})$.
It is, therefore, natural to ask whether we can minimise this constant.
Qualitative approximation guarantees can be derived with the help of the subsequent lemma.

\begin{lemma}
    \label{lem:gramian_ordering}
    Let $\boldsymbol{x}\subseteq \boldsymbol{y}$.
    Then $0\preceq G^{\boldsymbol{x}} \preceq G^{\boldsymbol{y}} \preceq I$ in Loewner ordering.
    In particular, $\lambda_{\mathrm{max}}(G^{\boldsymbol{x}}) \le 1$ for all $\boldsymbol{x}\in\mathcal{X}^n$.
\end{lemma}
\begin{proof}
    Let $\boldsymbol{v}\in\mathbb{R}^d$ be the coefficient vector of the function $v := \sum_{j=1}^d \boldsymbol{v}_j b_j$ in $\mathcal{V}_d$.
    Then,
    $$
        \boldsymbol{v}^\intercal G^{\boldsymbol{x}} \boldsymbol{v}
        = \sum_{j,k=1}^d \boldsymbol{v}_j \boldsymbol{v}_k \big(P_{\mathcal{V}_{\boldsymbol{x}}}b_j, P_{\mathcal{V}_{\boldsymbol{x}}}b_k\big)_{\mathcal{V}}
        = \Big(P_{\mathcal{V}_{\boldsymbol{x}}} \big({\textstyle \sum_{j=1}^d \boldsymbol{v}_j b_j}\big), P_{\mathcal{V}_{\boldsymbol{x}}} \big({\textstyle \sum_{k=1}^d \boldsymbol{v}_k b_k}\big)\Big)_{\mathcal{V}}
        = \|P_{\mathcal{V}_{\boldsymbol{x}}} v\|_{\mathcal{V}}^2 .
    $$
    Moreover, since $\{b_j\}_{j=1,\ldots,d}$ is a $\mathcal{V}$-orthonormal basis, it holds that $\boldsymbol{v}^\intercal\boldsymbol{v} = \|v\|_{\mathcal{V}}^2$. 
    Now, recall that
    \begin{align}
        0\preceq G^{\boldsymbol{x}} \preceq G^{\boldsymbol{y}} \preceq I
        \quad&\Leftrightarrow\quad
        \forall \boldsymbol{v}\in\mathbb{R}^d: 0 \le \boldsymbol{v}^\intercal G^{\boldsymbol{x}} \boldsymbol{v} \le \boldsymbol{v}^\intercal G^{\boldsymbol{y}} \boldsymbol{v} \le \boldsymbol{v}^\intercal \boldsymbol{v} \\
        \quad&\Leftrightarrow\quad
        \forall v\in\mathcal{V}_d: 0 \le \|P_{\mathcal{V}_{\boldsymbol{x}}} v\|_{\mathcal{V}}^2 \le \|P_{\mathcal{V}_{\boldsymbol{y}}} v\|_{\mathcal{V}}^2 \le \| v\|_{\mathcal{V}}^2
        \ ,
    \end{align}
    which is true, since $\mathcal{V}_{\boldsymbol{x}} \subseteq \mathcal{V}_{\boldsymbol{y}} \subseteq\mathcal{V}$.
\end{proof}

The above lemma will be useful in multiple places and can be used directly to derive the subsequent bounds on $\tau(\boldsymbol{x})$.

\begin{lemma}
    \label{lem:mu_is_gramian_error}
    It holds that $\mu(\boldsymbol{x})
    = \big(1 - \|I-G^{\boldsymbol{x}}\|_2\big)^{-1/2}
    $
    and
    $$
        \min\braces{2(1 - \mu(\boldsymbol{x})^{-2}), 1}
        \le \tau(\boldsymbol{x})
        \le \min\braces{(1 + \sqrt{d})(1 - \mu(\boldsymbol{x})^{-2}), 1} .
    $$
\end{lemma}
\begin{proof}
    Lemma~\ref{lem:gramian_ordering} implies
    $0 \preceq I-G^{\boldsymbol{x}} \preceq I$ and thus
    $
        \norm{I - G^{\boldsymbol{x}}}_2
        = \lambda_{\mathrm{max}}(I - G^{\boldsymbol{x}})
        = 1 - \lambda_{\mathrm{min}}(G^{\boldsymbol{x}})
        = 1 - \mu(\boldsymbol{x})^{-2}
    $.
    The second claim follows from the norm equivalence $\norm{I - G^{\boldsymbol{x}}}_2 \le \norm{I - G^{\boldsymbol{x}}}_{\mathrm{Fro}} \le \sqrt{d} \norm{I - G^{\boldsymbol{x}}}_2$.
\end{proof}

As a consequence, for a fixed space $\mcal{V}_d$, minimising the factor $\tau(\boldsymbol{x})$ in Theorem~\ref{thm:error_bound_noiseless} is equivalent to minimising $\mu(\boldsymbol{x})$.
Another consequence of Lemma~\ref{lem:gramian_ordering} is the subsequent qualitative approximation guarantees.

\begin{minipage}{\textwidth}
\begin{proposition}
\label{prop:mu_properties}
    For any $\boldsymbol{x}\in\mcal{X}^n$, the following properties are satisfied.
    \begin{enumerate}[label=(\roman*)]
        \item\label{prop:mu_properties:i}
            $\mu(\boldsymbol{x}) \ge 1$.
        \item\label{prop:mu_properties:ii}
            $\mu(\boldsymbol{x}) = \infty$ when $n < d$.
        \item\label{prop:mu_properties:iii}
            $\mu(\boldsymbol{x}) \le \big(1 - \sum_{j=1}^d \norm{(I-P_{\mcal{V}_{\boldsymbol{x}}}) b_j}_{\mcal{V}}^2\big)^{-1/2} = \big(1 - \operatorname{tr}(I - G^{\boldsymbol{x}})\big)^{-1/2}$ when $\operatorname{tr}(G^{\boldsymbol{x}}) \ge d-1$.
        \item\label{prop:mu_properties:iv}
            For every $\mu_\star > 1$ there exists an $n\in\mbb{N}$ and a sample $\boldsymbol{x}\in\mcal{X}^n$ such that $ \mu(\boldsymbol{x}) \le \mu_\star$.
    \end{enumerate}
\end{proposition}
\end{minipage}
\begin{proof}
    Property~\ref{prop:mu_properties:i} follows from Lemma~\ref{lem:mu_lmin} and Lemma~\ref{lem:gramian_ordering}, since $\mu(\boldsymbol{x}) = \lambda_{\mathrm{min}}(G^{\boldsymbol{x}})^{-1/2} \ge \lambda_{\mathrm{max}}(G^{\boldsymbol{x}})^{-1/2} \ge 1$.
    Claim~\ref{prop:mu_properties:ii} follows from Lemma~\ref{lem:mu_lmin}, since $b(\boldsymbol{x})\in\mbb{R}^{d\times n}$ and therefore $G^{\boldsymbol{x}}$ is at most of rank $n<d$.
    To show~\ref{prop:mu_properties:iii}, let $S(\mcal{V}_d) := \braces{v\in\mcal{V}_d : \norm{v}_{\mcal{V}} = 1}$ and recall that
    $$
        \mu(\boldsymbol{x})^{-2}
        = \min_{v\in S(\mathcal{V}_d)} \|P_{\mcal{V}_{\boldsymbol{x}}}v\|_{\mathcal{V}}^2 
        = 1 - \max_{v\in S(\mathcal{V}_d)} \|(I - P_{\mcal{V}_{\boldsymbol{x}}}) v\|_{\mathcal{V}}^2
        = 1 - \max_{v\in S(\mathcal{V})} \|(I - P_{\mcal{V}_{\boldsymbol{x}}}) P_{\mcal{V}_d} v\|_{\mathcal{V}}^2
        = 1 - \|(I - P_{\mcal{V}_{\boldsymbol{x}}})P_{\mcal{V}_d}\|_{\mathcal{V} \to \mathcal{V} }^2 .
    $$
    Bounding the operator norm with the Hilbert--Schmidt norm yields
    $$
        \mu(\boldsymbol{x})^{-2}
        \ge 1 - \norm{(I-P_{\mcal{V}_{\boldsymbol{x}}}) P_{\mcal{V}_d}}_{\mathrm{HS}}^2
        = 1 - \sum_{j=1}^d \norm{(I-P_{\mcal{V}_{\boldsymbol{x}}}) b_j}_{\mcal{V}}^2
        = \sum_{j=1}^d \norm{P_{\mcal{V}_{\boldsymbol{x}}} b_j}_{\mcal{V}}^2 - d + 1
        = \operatorname{tr}(G^{\boldsymbol{x}} - I) + 1 .
    $$
    To prove~\ref{prop:mu_properties:iv}, note that we can write~\eqref{eq:def:mu} as
    $$
        \mu(\boldsymbol{x})
        = \max_{v\in S(\mcal{V}_d)} \frac{1}{\sqrt{1 - \norm{v - P_{\mcal{V}_{\boldsymbol{x}}} v}_{\mcal{V}}^2}}.
    $$
    Consequently it suffices to show that for every $\varepsilon > 0$ there exists a sample $\boldsymbol{x}$ such that $\norm{v - P_{\mcal{V}_{\boldsymbol{x}}} v}_{\mcal{V}} \le \varepsilon$ for every $v\in S(\mcal{V}_d)$.
    Once this is proven, the claim follows by choosing $\varepsilon = (1 - \mu_\star^{-2})^{1/2}$.
    Since $\mcal{V}_d$ is finite-dimensional, the unit sphere $S(\mcal{V}_d)$ is compact, and there exists an $\frac\varepsilon2$-covering with centres $\braces{c_1, \ldots, c_n}$.
    This means that for every $v\in S(\mcal{V}_d)$ there exists a centre $c_{i(v)}$ such that
    $$
        \norm{v - c_{i(v)}} \le \tfrac{\varepsilon}{2} .
    $$
    By the Moore--Aronszajn theorem we know that $\mcal{V} = \overline{\operatorname{span}\braces{k(x, \bullet) : x\in\mcal{X}}}^{\norm{\bullet}_{\mcal{V}}}$.
    This implies that every $c_i$ can be approximated by
    $$
        \tilde{c}_i := \sum_{j=1}^{m_i} c_{i,j} k(x_{i,j}, \bullet)
    $$
    for some choice of $x_{i,j}\in\mcal{X}$ and $m_i$, 
    with an error of $\norm{c_i - \tilde{c}_i}_{\mcal{V}} \le \tfrac{\varepsilon}2$.
    We now define the sample $\boldsymbol{x} := (x_{i,j})_{{1\le i \le n , 1\le j\le m_i}}$ and observe that $\tilde{c}_i\in\mcal{V}_{\boldsymbol{x}}$ for every $i=1,\ldots,n$.
    Therefore
    $$
        \norm{v - P_{\mcal{V}_{\boldsymbol{x}}} v}
        \le \norm{v - \tilde{c}_{i(v)}}
        \le \norm{v - c_{i(v)}} + \norm{c_{i(v)} - \tilde{c}_{i(v)}}
        \le \varepsilon .
    $$
    This concludes the proof.
\end{proof}

The preceding proposition guarantees that we can always find a point set such that the suboptimality factor $\mu(\boldsymbol{x})$ deceeds a prespecified threshold $\mu_\star$.
However, it does not tell us how large this point set needs to be, nor does it provide a strategy to generate it.
This question is addressed by the subsequent theorem.

To prove this result, we assume that $\mcal{V}$ is compactly embedded in $L^2(\nu)$  for some probability measure $\nu$ on $\mcal{X}$.
Then, the integral operator
$$
    \Sigma : v \mapsto \int_{\mcal{X}} k(\bullet, y) v(y) \dx[\nu](y),
$$
is compact from $L^2(\nu)$ to $L^2(\nu)$ and admits a spectral decomposition
$$
    \Sigma = \sum_{m\in \mathbb{N}} \lambda_m \phi_m ( \phi_m , \bullet)_{L^2(\nu)},
$$
where $\{\phi_l\}_{l\in \mathbb{N}}$ is an $L^2(\nu)$-orthonormal and $\mcal{V}$-orthogonal system.
This corresponds to a decomposition of the kernel $k(x,y) = \sum_{m \in \mathbb{N}} \lambda_m \phi_m(x) \phi_m(y),$ which converges pointwise.
Moreover, we assume that $\Sigma$ is a trace-class (or nuclear) operator, i.e.\ that $\int K(x) \dx[\nu(x)] = \sum_{m\in \mathbb{N}} \lambda_m <\infty$.
Then it can be shown~\cite{belhadji2020kernel} that 
\begin{equation}
    \det(K(\boldsymbol{x})) \dx[\nu^{\otimes n}](\boldsymbol{x})
\end{equation}
is a finite measure, which can be used as an (unnormalised) sampling measure for $\boldsymbol{x}$.

\begin{minipage}{\textwidth}
\begin{theorem}[Proposition~5 and Theorem~6 in~\cite{belhadji2020kernel}]
\label{thm:ayoub}
    Assume that $\mcal{V}$ is compactly embedded in $L^2(\nu)$ for some probability measure $\nu$ on $\mcal{X}$ and that $\int K(x) \dx[\nu(x)] <\infty$.
    Moreover, suppose that $\lambda = (\lambda_m)_{m\in \mathbb{N}}$ satisfies either
    \begin{itemize}
        \item $\lambda_m = \lambda_m^{\mathrm{alg}} \asymp m^{-2s}$ for some $s > 1/2$ (e.g. Sobolev spaces of finite smoothness) or
        \item $\lambda_m = \lambda_m^{\mathrm{exp}} \asymp \alpha^m$ for some $\alpha\in(0,1)$ (e.g.\ the Gaussian kernel),
    \end{itemize}
    and define $B := B^{\mathrm{alg}}(s) \asymp (1+ \frac{1}{2s-1})^{2s}$ if $\lambda = \lambda^{\mathrm{alg}}$ or $B := B^{\mathrm{exp}}(\alpha) \asymp \frac{\alpha}{1-\alpha}$ if $\lambda = \lambda^{\mathrm{exp}}$. \\
    Let $\boldsymbol{x}\in \mcal{X}^n$ be drawn from the (unnormalised) \emph{continuous volume sampling} distribution
    \begin{equation}
        \det(K(\boldsymbol{x})) \dx[\nu^{\otimes n}](\boldsymbol{x}) . \label{eq:cvs}
    \end{equation}
    Then for any $v \in \Sigma^{1/2}\mcal{V}$, it holds that
    $$
        \mbb{E}\bracs*{\norm{v - P_{\mcal{V}_{\boldsymbol{x}}} v}_{\mcal{V}}^2}
        \le (2 + B) \lambda_n \Vert \Sigma^{-1/2} v \Vert_{\mcal{V}}^2 .
    $$
\end{theorem}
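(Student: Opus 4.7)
The plan is to adapt the analysis of continuous volume sampling from~\cite{belhadji2020kernel}, which exploits the determinantal structure via Mercer's theorem and the Cauchy--Binet formula.

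First, I would decompose the kernel as $k(x,y) = \sum_m \lambda_m \phi_m(x)\phi_m(y)$, with $\{\phi_m\}$ orthonormal in $L^2(\nu)$ and $\{\sqrt{\lambda_m}\phi_m\}$ orthonormal in $\mcal{V}$. Expanding $v = \sum_m a_m \phi_m$, the hypothesis $v\in\Sigma^{1/2}\mcal{V}$ translates to $\|\Sigma^{-1/2}v\|_{\mcal{V}}^2 = \sum_m a_m^2/\lambda_m^2 < \infty$, whereas $\|v\|_{\mcal{V}}^2 = \sum_m a_m^2/\lambda_m$. With $\Phi(\boldsymbol{x})_{mi} := \phi_m(x_i)$ and $\Lambda := \operatorname{diag}(\lambda_m)$, the factorisation $K(\boldsymbol{x}) = \Phi(\boldsymbol{x})^\intercal \Lambda\Phi(\boldsymbol{x})$ combined with Cauchy--Binet yields
$$\det K(\boldsymbol{x}) = \sum_{T\subset\mbb{N},\,\abs{T}=n}\Bigl(\prod_{m\in T}\lambda_m\Bigr)\det(\Phi_T(\boldsymbol{x}))^2.$$
This exhibits continuous volume sampling as a two-stage procedure: first draw an index set $T\subset\mbb{N}$ of cardinality $n$ with probability proportional to $\prod_{m\in T}\lambda_m$, then draw $\boldsymbol{x}$ from the projection DPP on $L^2(\nu)$ whose correlation kernel is $K_T(x,y) = \sum_{m\in T}\phi_m(x)\phi_m(y)$.

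Second, for each fixed $T$, I would bound the conditional expected residual $\mbb{E}_T[\|v - P_{\mcal{V}_{\boldsymbol{x}}}v\|_{\mcal{V}}^2]$. Under the projection DPP indexed by $T$, the matrix $\Phi_T(\boldsymbol{x})$ is almost surely invertible, so $\boldsymbol{x}$ is poised for interpolation in the $n$-dimensional space $E_T := \operatorname{span}\{\phi_m : m\in T\}$. A direct computation using the reproducing property shows that the kernel-interpolation residual is governed by the coefficients of $v$ outside $T$, yielding an estimate of the form $\mbb{E}_T[\|v - P_{\mcal{V}_{\boldsymbol{x}}}v\|_{\mcal{V}}^2] \lesssim \sum_{m\notin T} a_m^2/\lambda_m$. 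Marginalising over $T$ then gives
$$\mbb{E}\bigl[\|v - P_{\mcal{V}_{\boldsymbol{x}}}v\|_{\mcal{V}}^2\bigr] \lesssim \sum_m \frac{a_m^2}{\lambda_m}\,\mbb{P}[m\notin T].$$

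Finally, the probability $\mbb{P}[m\notin T]$ is a ratio of elementary symmetric polynomials in $(\lambda_\ell)_{\ell\in\mbb{N}}$. Under algebraic decay $\lambda_m = m^{-2s}$, a tail-sum comparison yields $\mbb{P}[m\notin T] \le B^{\mathrm{alg}}(s)\,\lambda_n/\lambda_m$; under exponential decay $\lambda_m = \alpha^m$, a geometric-series argument yields $\mbb{P}[m\notin T] \le B^{\mathrm{exp}}(\alpha)\,\lambda_n/\lambda_m$. Substituting produces $\mbb{E}[\|v - P_{\mcal{V}_{\boldsymbol{x}}}v\|_{\mcal{V}}^2] \le (2+B)\,\lambda_n\sum_m a_m^2/\lambda_m^2 = (2+B)\,\lambda_n\,\|\Sigma^{-1/2}v\|_{\mcal{V}}^2$, with the additive constant $2$ absorbing the contributions from the $n$ leading indices (for which $m\in T$ with overwhelming probability). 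The main obstacle is precisely this last step: the elementary-symmetric-polynomial ratios admit no closed form and must be controlled by decay-specific tail comparisons, which is what pins down the exact form of the constants $B^{\mathrm{alg}}$ and $B^{\mathrm{exp}}$.
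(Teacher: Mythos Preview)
The paper does not supply its own proof of this statement: Theorem~\ref{thm:ayoub} is quoted verbatim as ``Proposition~5 and Theorem~6 in~\cite{belhadji2020kernel}'' and is used as a black box to derive Corollary~\ref{cor:ayoub}. There is therefore nothing in the paper to compare your proposal against.

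That said, your sketch is broadly faithful to the argument in the cited reference: the Mercer decomposition, the Cauchy--Binet identity exhibiting continuous volume sampling as a mixture of projection DPPs indexed by $T\subset\mbb{N}$, and the final step controlling ratios of elementary symmetric polynomials under the two decay regimes are all correct ingredients. One point deserves care. Your ``conditional bound'' step asserts that, given $T$, the residual is essentially $\sum_{m\notin T}a_m^2/\lambda_m$ because $\Phi_T(\boldsymbol{x})$ is invertible. This is not how~\cite{belhadji2020kernel} proceeds, and the heuristic is misleading: invertibility of $\Phi_T(\boldsymbol{x})$ says the points are unisolvent for $E_T$, but $\mcal{V}_{\boldsymbol{x}}=\operatorname{span}\{k(x_i,\bullet)\}$ is \emph{not} $E_T$, so there is no direct interpolation argument in $E_T$. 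The actual proof derives an \emph{exact} formula for $\mbb{E}[\|v-P_{\mcal{V}_{\boldsymbol{x}}}v\|_{\mcal{V}}^2]$ by writing the residual norm as a Schur complement of an augmented kernel matrix, expanding the resulting determinant via Cauchy--Binet, and integrating against $\nu^{\otimes n}$; this yields the coefficient $e_n(\lambda_{-m})/e_n(\lambda)$ in front of each mode $m$ directly, without passing through a conditional bound. Your final display and the identification of the constants $B^{\mathrm{alg}}$, $B^{\mathrm{exp}}$ are correct.
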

\end{minipage}

\begin{corollary}\label{cor:ayoub}
    Under the assumptions of Theorem~\ref{thm:ayoub}, for $\mcal{V}_d \subseteq \Sigma^{1/2}\mcal{V}$, and for any $\varepsilon\in(0,1)$, it holds that
    $$
        \mbb{P}\bracs*{\mu(\boldsymbol{x}) \ge \frac{1}{\sqrt{1-\varepsilon^2}}}
        \le \frac{(2 + B) dC_d^2 \lambda_n}{\varepsilon^2} ,
    $$ 
    where $C_d := \max_{v\in\mcal{V}_d} \frac{\norm{\Sigma^{-1/2}v}_{\mcal{V}}}{\norm{v}_{\mcal{V}}}$.
    Moreover, if $\mcal{V}_d = \operatorname{span}\braces{\phi_1,\ldots,\phi_d}$, then $C_d^2 = \lambda_d^{-1}$.
\end{corollary}
\begin{proof}
    From Proposition~\ref{prop:mu_properties} it follows that
    $\mu(\boldsymbol{x}) \ge \frac{1}{\sqrt{1-\varepsilon^2}}$ implies $1 - \sum_{j=1}^d  \norm{(I-P_{\mathcal{V}_{\boldsymbol{x}}}) b_j}_{\mathcal{V}}^2 \le 1 - \varepsilon^2$, which is equivalent to $\varepsilon^2\le \sum_{j=1}^d \norm{(I-P_{\mathcal{V}_{\boldsymbol{x}}}) b_j}_{\mathcal{V}}^2$.
    Markov's inequality thus yields
    $$
        \mathbb{P}\Big[
            \mu(\boldsymbol{x}) \ge \tfrac{1}{\sqrt{1-\varepsilon^2}}
        \Big]
        \le \mathbb{P}\Big[
            {\textstyle \sum_{j=1}^d} \norm{(I-P_{\mathcal{V}_{\boldsymbol{x}}}) b_j}_{\mathcal{V}}^2
            \ge \varepsilon^2
        \Big]
        \le \frac{\sum_{j=1}^d \mathbb{E}\big[\norm{(I-P_{\mathcal{V}_{\boldsymbol{x}}}) b_j}_{\mathcal{V}}^2\big]}{\varepsilon^2} .
    $$
    Using Theorem~\ref{thm:ayoub}, it follows that
    $$
        \mathbb{P}\Big[
            \mu(\boldsymbol{x}) \ge \tfrac{1}{\sqrt{1-\varepsilon^2}}
        \Big]
        \le \frac{(2 + B) d C_d^2 \lambda_n}{\varepsilon^2} .
    $$
    To prove the second claim, recall that $\Sigma^{-1/2} v = \sum_{l\in\mbb{N}} \lambda_l^{-1/2}(v, \phi_l)_{L^2}\phi_l$ and therefore, 
    $$
        \norm{v}_{\mcal{V}}^2 = \sum_{l\in\mbb{N}} \frac{(v, \phi_l)_{L^2}^2}{\lambda_l}
        \qquad\text{and}\qquad
        \norm{\Sigma^{-1/2} v}_{\mcal{V}}^2 = \sum_{l\in\mbb{N}} \frac{(v, \phi_l)_{L^2}^2}{\lambda_l^2} .
    $$
    Defining the matrix $A_d := \operatorname{diag}(\lambda_1, \ldots, \lambda_d)$ and expanding $v\in\mcal{V}_d$ in terms of $\phi_{l}$, it thus holds that
    \begin{align}
        C_d^2 &= \max_{c\in\mbb{R}^d} \frac{c^\intercal A_d^{-2} c}{c^\intercal A_d^{-1} c} = \lambda_{\mathrm{max}}(A_d^{-1}) = \lambda_{\mathrm{min}}(A_d)^{-1} = \lambda_d^{-1} .
        \qedhere
    \end{align}
\end{proof}

Corollary~\ref{cor:ayoub} guarantees that
$$
    \mbb{P}\bracs*{\mu(\boldsymbol{x}) \le \mu_\star}
    \ge 1 - (2 + B) \lambda_n 
    \tfrac{\mu_\star^2}{\mu_\star^2 - 1} d C_d^2 . 
$$
For any $n$ for which this lower bound is positive, there exists a point set $\boldsymbol{x}$ that satisfies $\mu(\boldsymbol{x}) \le \mu_\star$.
In particular, when $d > 1$ and $\mcal{V}_d = \operatorname{span}\{\phi_1 , \ldots, \phi_d\}$ is spanned by the first $d$ spectral basis functions of the RKHS $\mathcal{V}$, we know that
\begin{equation}
\label{eq:sample_size_bound}
    \mathbb{P}\bracs*{\mu(\boldsymbol{x}) \le \mu_\star}
    > 0
    \quad\Leftarrow\quad
    \begin{cases}
        n \ge d^{1 + 1/(2s)} \left((2 + B^{\mathrm{alg}}(s))\tfrac{\mu_\star^2}{\mu_\star^2 - 1}\right)^{1/(2s)}
            &\text{and }  \lambda_n = n^{-2s}, \text{ or} \\
        n \ge d + \tfrac{\ln(d)}{\ln(1 / \alpha)} + \tfrac{1}{\ln(1/\alpha)}\ln\left((2 + B^{\mathrm{exp}}(\alpha))\tfrac{\mu_\star^2}{\mu_\star^2 - 1}\right)
            &\text{and } \lambda_n = \alpha^n .
    \end{cases}
\end{equation}
This not only guarantees the existence of a sample $\boldsymbol{x}$ of size (almost) $n\propto d$ satisfying $\mu(\boldsymbol{x}) \le \mu_\star$ but also provides a strategy for generating such a sample by drawing $\boldsymbol{x}$ according to~\eqref{eq:cvs}.
However, this bound only holds under strong assumptions on $\mathcal{V}_d$ as the growth of $C_d$ must be controlled.
As an example suppose that $\mathcal{V} = H^k([0,1])$.
Then $\lambda_m \asymp m^{-2k}$ (cf.~\cite{sarazin_2023}) and
$$
    \norm{\Sigma^{-1/2} v}_{\mathcal{V}}^2
    = \norm{\Sigma^{-1/2} v}_{H^k([0,1])}^2
    = \sum_{m\in\mathbb{N}} \frac{(v, \phi_m)_{L^2}^2}{\lambda_m^2}
    \asymp \sum_{m\in\mathbb{N}} \frac{(v, \phi_m)_{L^2}^2}{m^{-4k}}
    \asymp \norm{v}_{H^{2k}([0,1])}^2 \;.
$$
This implies
$$
    C_d \asymp \max_{v\in\mathcal{V}_d} \frac{\|v\|_{H^{2k}([0,1])}}{\|v\|_{H^{k}([0,1])}} \;,
$$
which could grow extremely fast.
Numerical examples for illustration are provided in sections~\ref{sec:experiments:h1} and~\ref{sec:experiments:h10}.

\begin{remark}
    The problem of minimising $\mu$ has also been considered in the context of optimal sensor placement~\cite{Binev2018}.
    We can compare the sample size bound from equation~\eqref{eq:sample_size_bound} to the one provided in~\cite{Binev2018}.
    In the setting when $\mcal{V} = H^1_0(0,1)$ and when $\mcal{V}_d$ is the span of the first $d$ Fourier basis elements, \Cite[Theorem~2.3]{Binev2018}  guarantees the existence of a sample $\boldsymbol{x}\in\mcal{X}^n$ with $\mu(\boldsymbol{x})\le\mu_\star$ and $n \propto d$.
    Theorem~\ref{thm:ayoub} extends this bound to more general RKHS (such as $H^s(\mcal{X})$ for $\mcal{X}\subseteq \mbb{R}^d$ and $s>\frac{d}2$) under suitable regularity assumptions and for any choice of subspace $\mcal{V}_d$ satisfying assumptions of Corollary \ref{cor:ayoub}.
\end{remark}

\section{Sampling the points}
\label{sec:sampling}

Minimising $\mu(\boldsymbol{x}) = \lambda_{\mathrm{min}}(G^{\boldsymbol{x}})^{-1/2}$ over $\boldsymbol{x}\in\mcal{X}^n$ for a fixed sample size $n$ is a non-trivial, non-convex problem and potentially NP-hard.\footnote{In case $\mcal{X}$ has finite cardinality and $K(\boldsymbol{x}) = I$, minimising $\mu(\boldsymbol{x})$, i.e.\ maximising $\lambda_{\mathrm{min}}(G^{\boldsymbol{x}})$, is equivalent to an E-optimal design problem, which is known to be NP-hard~\cite{civril2009}.}
However, the Theorem~\ref{thm:ayoub} from the preceding section shows that exact optimisation is not necessary and that good point sets $\boldsymbol{x}\in\mcal{X}^n$ can already be obtained by sampling.

Building on this insight, we propose to replace the minimisation of $\mu$ with a related sampling problem.
We suppose that there exists some measure $\rho$ for which the integral $Z := \int\det(G^{\boldsymbol{x}}) \dx[\rho^{\otimes n}(\boldsymbol{x})]$ is finite.
Then
$$
    Z^{-1} \det(G^{\boldsymbol{x}}) \dx[\rho^{\otimes n}(\boldsymbol{x})]
$$
constitutes a probability measure 
and we can draw $\boldsymbol{x}$ according to this measure.

Compared to classical continuous volume sampling, where $\boldsymbol{x}$ is drawn from $\det(K(\boldsymbol{x})) \dx[\nu^{\otimes n}(\boldsymbol{x})]$, sampling from $\det(G^{\boldsymbol{x}}) \dx[\rho^{\otimes n}(\boldsymbol{x})]$ has two major advantages.
Firstly, the theory of continuous volume sampling (see Corollary~\ref{cor:ayoub}) requires a strong integrability condition $\int K(x) \dx[\nu(x)] < \infty$.
Secondly, the theory of continuous volume sampling depends on the embedding constant $C_d$, which is hard to compute and potentially very large.

To see why sampling from $\det(G^{\boldsymbol{x}}) \dx[\rho^{\otimes n}(\boldsymbol{x})]$ is a sensible idea, we first argue that $\boldsymbol{x}\mapsto\det(G^{\boldsymbol{x}})$ is a good surrogate for minimising $\mu(\boldsymbol{x}) = \lambda_{\mathrm{min}}(G^{\boldsymbol{x}})^{-1/2}$.
Recall that Lemma~\ref{lem:gramian_ordering} ensures that $0 \le \lambda_{\mathrm{min}}(G^{\boldsymbol{x}}) \le \lambda_{\mathrm{max}}(G^{\boldsymbol{x}})\le 1$ and therefore
$$
    \det(G^{\boldsymbol{x}})
    \le \lambda_{\mathrm{min}}(G^{\boldsymbol{x}})
    \le \det(G^{\boldsymbol{x}})^{1/d} .
$$
Hence, the value of the determinant provides a bound on the smallest eigenvalue, which in turn bounds $\mu$.
Replacing the maximisation of the surrogate $\boldsymbol{x}\mapsto\det(G^{\boldsymbol{x}})$ with a sample from $\det(G^{\boldsymbol{x}}) \dx[\rho^{\otimes n}(\boldsymbol{x})]$
is a classical idea in global optimisation and, in principle, justified by the Paley--Zygmund inequality, which provides lower bounds for the probability that a sample lies in a certain superlevel set of the objective function.
The reason for using the surrogate in the first place, even though it does not reduce the complexity of the optimisation problem,\footnote{In case $\mcal{X}$ has finite cardinality and $K(\boldsymbol{x}) = I$, maximising $\det(G^{\boldsymbol{x}})$ is equivalent to a D-optimal design problem, which is known to be NP-hard~\cite{Welch1982}.} is that it simplifies the sampling.

It remains to find a measure $\rho$ for which the normalisation constant $Z = \int \det(G^{\boldsymbol{x}}) \dx[\rho^{\otimes n}(\boldsymbol{x})]$ is finite.
This question is addressed in the subsequent lemma.


\begin{lemma}
\label{lem:det_is_pdf}
   Let $\rho$ be a finite measure on $\mcal{X}$ and $G^{\boldsymbol{x}}$ given by~\eqref{eq:def:Gramian_x}.
   The function $\mcal{X}^n\ni\boldsymbol{x} \mapsto \det(G^{\boldsymbol{x}})$ is non-negative and $\rho^{\otimes n}$-integrable.
\end{lemma}
\begin{proof}
    To show non-negativity, recall that $K(\boldsymbol{x})$ is a kernel matrix, and thus positive semidefinite by definition.
    This implies that $K(\boldsymbol{x})^+$ and therefore $ G^{\boldsymbol{x}} = b(\boldsymbol{x}) K(\boldsymbol{x})^+ b(\boldsymbol{x})^\intercal$ are positive semidefinite matrices. 
    This proves that $\det(G^{\boldsymbol{x}})$ is non-negative.
    To show integrability, recall that Lemma~\ref{lem:gramian_ordering} ensures $0 \le \lambda_{\mathrm{min}}(G^{\boldsymbol{x}}) \le \lambda_{\mathrm{max}}(G^{\boldsymbol{x}})\le 1$ and therefore
    $
        \det(G^{\boldsymbol{x}}) \le 1
    $.
    This implies
    \begin{align}
        \int\det(G^{\boldsymbol{x}})\,\mathrm{d}\rho^{\otimes n}(\boldsymbol{x})
        &\le \rho(\mcal{X})^n
        < \infty .
        \qedhere
    \end{align}
\end{proof}

\begin{minipage}{\textwidth}
\begin{remark}
    \label{rmk:choice}
    Note that the definition of the sampling density $\det(G^{\boldsymbol{x}})\dx[\rho^{\otimes n}(\boldsymbol{x})]$ depends on the arbitrary measure $\rho$, which does not appear in the definition of $\mu(\boldsymbol{x})$.
    This introduces an artificial preference of some $\boldsymbol{x}$ over others, which does not result from maximising $\det(G^{\boldsymbol{x}})$.
    There are indeed uncountably many choices for $\rho$, and the optimal choice would be as close as possible to the discrete measures $\rho = \delta_{\boldsymbol{x}^\star}$ where $\boldsymbol{x}^\star$ is a minimiser of $\mu(\boldsymbol{x}^\star)$.
    (Optimal in the sense that any sample from $\det(G^{\boldsymbol{x}})\dx[\rho]$ would minimise $\mu$.)
\end{remark}
\end{minipage}

In the following, we consider a generalisation of the above sampling distribution of the form
\begin{equation}
\label{eq:our-distribution}
    \det(G^{\boldsymbol{x}})\dx[(\rho_1 \otimes \dots \otimes \rho_n)](\boldsymbol{x})
\end{equation}
where $\{\rho_k\}_{k \in \mathbb{N}}$ is a sequence of possibly distinct measures.

\subsection{Drawing a sample of size \texorpdfstring{$\boldsymbol{d}$}{d}}
 

Assuming $\det(K(\boldsymbol{x})) \ne 0$, we can factorise the determinant as
$$
    \det(G^{\boldsymbol{x}}) = \det(b(\boldsymbol{x})K(\boldsymbol{x})^+ b(\boldsymbol{x})^\intercal)
    = \frac{\det(b(\boldsymbol{x})^\intercal b(\boldsymbol{x}))}{\det(K(\boldsymbol{x}))}
    = \frac{\det(K_d(\boldsymbol{x}))}{\det(K(\boldsymbol{x}))} .
$$
The density is a ratio of densities of determinantal point processes, and the resulting distribution appears as a competition between a repulsive point process associated with the kernel   $k_d$ and an attractive point process associated with the kernel $k$.
We now decompose $\boldsymbol{x}\in\mcal{X}^d$ as $\boldsymbol{x} = \boldsymbol{x}_{<d} \oplus x_d = \boldsymbol{x}_{<d-1} \oplus x_{d-1} \oplus x_d = \ldots$ and write
$$
    K(\boldsymbol{x}) = \begin{pmatrix}
        K(\boldsymbol{x}_{<d}) & k(\boldsymbol{x}_{<d}, x_d) \\
        k(x_d, \boldsymbol{x}_{<d}) & K(x_d)
    \end{pmatrix} .
$$
Since $K(x_d) > 0$, it follows that $\operatorname{ker}(K(\boldsymbol{x}_{<d})) = 0$, and we can use Schur's formula to compute
$$
    \det(K(\boldsymbol x))
    = \det(K(\boldsymbol{x}_{<d})) (K(x_d) - k(x_d, \boldsymbol{x}_{<d}) K(\boldsymbol{x}_{<d})^{-1} k(\boldsymbol{x}_{<d}, x_d)) .
$$
Recursive application of this relation to both kernels yields the factorisation
\begin{equation}
\label{eq:product_formula}
    \det(G^{\boldsymbol{x}})
    = \frac{\det(K_d(\boldsymbol{x}))}{\det(K(\boldsymbol{x}))}
    = \prod_{i=1}^{d} q(x_i \mid \boldsymbol{x}_{<i})
    \qquad\text{with}\qquad
    q(x \mid \boldsymbol{x})
    := \frac{
        K_d(x)
        - k_d(x, \boldsymbol{x}) K_d(\boldsymbol{x})^{-1} k_d(\boldsymbol{x}, x)
    }{
        K(x)
        - k(x, \boldsymbol{x}) K(\boldsymbol{x})^{-1} k(\boldsymbol{x}, x)
    } .
\end{equation}


Motivated by the chain rule for projection determinantal point processes~\cite{Hough2006}, it seems natural to expect the decomposition~\eqref{eq:product_formula} to yield the conditional marginals of $\det(G^{\boldsymbol{x}})$.
Note, however, that we can factorise any function $f(\boldsymbol{x}) = f_1(x_1) f_2(x_2 \mid x_1) \cdots f_d(x_d\mid x_1,\ldots,x_{d-1})$, such that all factors but the last one have arbitrary (positive) values.
In particular, the factors $f_i(x_i \mid \boldsymbol{x}_{<i})$ do not have to be (conditional) marginals.
The subsequent proposition quantifies how close the decomposition~\eqref{eq:product_formula} is to a product of (conditional) marginals.

\begin{minipage}{\textwidth}
\begin{proposition}
\label{prop:marginal}
    Suppose that $0 < c_i\le C_i <\infty$ for $i=1,\ldots,d$ and $\rho_1,\ldots,\rho_d$ are such that the functions
    $$
        Z_i : \mcal{X}^{i-1} \to \mbb{R}
        \qquad
        Z_i : \boldsymbol{x} \mapsto \int q(x\mid\boldsymbol{x})\dx[\rho_i(x)]
    $$
    are bounded almost surely by $Z_i \in [c_i, C_i]$ for all $i=1,\ldots,d$.
    Then, the probability density
    $$
        p(\boldsymbol{x})
        := \tfrac{1}{Z} \det(G^{\boldsymbol{x}})
        \qquad\text{with}\qquad
        Z := \int \det(G^{\boldsymbol{x}}) \dx[(\rho_1\otimes\cdots\otimes\rho_d)(\boldsymbol{x})]
    $$
    can be written in terms of the conditional probabilities $p(\boldsymbol{x}) = p(x_1) p(x_{2}\mid \boldsymbol{x}_{<2}) \cdots p(x_d \mid \boldsymbol{x}_{<d})$ with
    $$
        p(x_i \mid \boldsymbol{x}_{<i}) \in \tfrac1{Z_i(\boldsymbol{x}_{<i})} q(x_i \mid \boldsymbol{x}_{<i}) \cdot \Big[\big({\textstyle\prod_{j=i}^d \tfrac{c_j}{C_j}}\big), \big({\textstyle\prod_{j=i}^d \tfrac{C_j}{c_j}}\big)\Big] .
    $$
    Moreover, if $\rho_1=\rho_2=\cdots=\rho_d = \rho$, and $p_i$ is the marginal density of any component $x_i$ of $\boldsymbol{x} \sim \det(G^{\boldsymbol{x}}) \dx[(\rho_1 \otimes \dots \otimes \rho_n)](\boldsymbol{x})$, 
    then $p_i(x_i) \asymp \frac{K_d(x_i)}{K(x_i)}$.
\end{proposition}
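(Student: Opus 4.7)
The plan is to combine the chain-rule factorisation~\eqref{eq:product_formula}, $\det(G^{\boldsymbol{x}}) = \prod_{i=1}^d q(x_i \mid \boldsymbol{x}_{<i})$, with repeated application of Fubini. Starting from the innermost variable $x_d$, the constancy of $Z_d(\boldsymbol{x}_{<d})$ lets me integrate out $x_d$ and pull the resulting scalar $Z_d$ through the outer integrals. After this step, the remaining factors $q(x_1\mid\emptyset), \ldots, q(x_{d-1}\mid\boldsymbol{x}_{<d-1})$ have exactly the same structure, so the argument iterates. This telescopes to $Z = \prod_{i=1}^d Z_i$, and substituting back gives
\begin{equation*}
    p(\boldsymbol{x})
    = \frac{1}{Z}\prod_{i=1}^d q(x_i \mid \boldsymbol{x}_{<i})
    = \prod_{i=1}^d \frac{q(x_i \mid \boldsymbol{x}_{<i})}{Z_i}
    = \prod_{i=1}^d p(x_i \mid \boldsymbol{x}_{<i}),
\end{equation*}
with each factor being a genuine probability density in $x_i$ against $\rho_i$ by the very definition of $Z_i$.

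For the marginal claim when $\rho_1=\cdots=\rho_d=\rho$, I would first observe that $G^{\boldsymbol{x}} = b(\boldsymbol{x}) K(\boldsymbol{x})^+ b(\boldsymbol{x})^\intercal$ is invariant under permutations of the entries of $\boldsymbol{x}$: a permutation matrix $P_\sigma$ acts as $b(\boldsymbol{x})\mapsto b(\boldsymbol{x})P_\sigma$ and $K(\boldsymbol{x})\mapsto P_\sigma^\intercal K(\boldsymbol{x})P_\sigma$, hence $K(\boldsymbol{x})^+ \mapsto P_\sigma^\intercal K(\boldsymbol{x})^+ P_\sigma$, and the factors $P_\sigma P_\sigma^\intercal = I$ cancel inside $G^{\boldsymbol{x}}$. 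Combined with the symmetry of $\rho^{\otimes d}$, this makes the joint law of $\boldsymbol{x}$ exchangeable, so all one-dimensional marginals coincide, and it is enough to compute the marginal of $x_1$. Integrating $x_2, \ldots, x_d$ out of the factorised density and using that each conditional integrates to $1$ against $\rho$ leaves $p(x_1) = Z_1^{-1} q(x_1 \mid \emptyset)$. Evaluating $q$ on the empty history collapses both Schur complements in~\eqref{eq:product_formula}, giving $q(x_1\mid\emptyset) = K_d(x_1)/K(x_1)$, which is the asserted proportionality.

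The only technical subtlety is that the derivation of~\eqref{eq:product_formula} assumed $\det(K(\boldsymbol{x}))\neq 0$. This causes no real problem: Lemma~\ref{lem:det_is_pdf} gives $\det(G^{\boldsymbol{x}})\le 1$ on all of $\mcal{X}^d$, so Fubini applies unconditionally, and the factorisation on the complement of the degenerate locus is enough to identify the densities. I do not expect any deeper obstacle; once the product formula~\eqref{eq:product_formula} is in hand, the statement reduces to careful bookkeeping of the conditional integrals together with the symmetry argument for the marginal.
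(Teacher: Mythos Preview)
Your proposal is correct and follows essentially the same route as the paper: use the product formula~\eqref{eq:product_formula}, integrate out variables one at a time using the constancy of the $Z_i$ to obtain $Z=\prod_i Z_i$ and hence the conditional factorisation, then invoke permutation invariance of $\det(G^{\boldsymbol{x}})\,\dx[\rho^{\otimes d}]$ to reduce the marginal claim to $q(x_1\mid\emptyset)=K_d(x_1)/K(x_1)$. Your explicit handling of the permutation invariance of $G^{\boldsymbol{x}}$ and the Fubini justification on the degenerate locus are slightly more detailed than the paper's version, but the argument is the same.
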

\end{minipage}

\begin{proof}
    To compute the conditional probabilities, recall that
    $$
        p(x_i\mid \boldsymbol{x}_{<i})
        = \frac{p(\boldsymbol{x}_{\le i})}{\int p(\boldsymbol{x}_{\le i}) \dx[\rho_i(x_i)]}
        = \frac{
            \int p(\boldsymbol{x}) \dx[(\rho_{i+1}\otimes\cdots\otimes\rho_{d})(\boldsymbol{x}_{>i})]
        }{
            \int p(\boldsymbol{x}) \dx[(\rho_{i}\otimes\cdots\otimes\rho_{d})(\boldsymbol{x}_{\ge i})]
        }
        .
    $$
    Now recall that by~\eqref{eq:def:Gramian_x}, $\det(G^{\boldsymbol{x}}) = \det(b(\boldsymbol{x})K(\boldsymbol{x})^+ b(\boldsymbol{x})^\intercal)$.
    Since $\det(K(\boldsymbol{x})) = 0$ implies $\det\pars{K(\boldsymbol{x})^+} = 0$ and thus $\det\pars{G^{\boldsymbol{x}}} = 0$, sampling from $\det(G^{\boldsymbol{x}})$ guarantees that $\det(K(\boldsymbol{x})) \ne 0$ almost surely.
    We can thus use the factorisation of the determinantal density from equation~\eqref{eq:product_formula} and write
    $$
        \int p(\boldsymbol{x}) \dx[(\rho_{i}\otimes\cdots\otimes\rho_{d})(\boldsymbol{x}_{\ge i})]
        = \frac1Z \prod_{j=1}^{i-1} q(x_j \mid \boldsymbol{x}_{<j})
        \int \prod_{j=i}^{d} q(x_j \mid \boldsymbol{x}_{<j}) \dx[(\rho_{i}\otimes\cdots\otimes\rho_{d})(\boldsymbol{x}_{\ge i})]
        .
    $$
    Using Fubini's theorem and the assumption that the $Z_i$-functions are bounded, we can bound the right-hand side by
    \begin{align}
        \int \prod_{j=i}^{d} q(x_j \mid \boldsymbol{x}_{<j}) \dx[(\rho_{i}\otimes\cdots\otimes\rho_{d})(\boldsymbol{x}_{\ge i})]
        &\in \bigg[Z_{i}(\boldsymbol{x}_{<i})\prod_{j=i+1}^d c_j, Z_{i}(\boldsymbol{x}_{<i}) \prod_{j=i+1}^d C_j\bigg]
        \subseteq \bigg[\prod_{j=i}^d c_j, \prod_{j=i}^d C_j\bigg]
        .
    \end{align}
    Inserting these bounds into the definition of the conditional probability yields the lower bound
    $$
        p(x_i\mid \boldsymbol{x}_{<i})
        \ge \frac{
            \frac1Z \prod_{j=1}^i q(x_j\mid\boldsymbol{x}_{<j}) \prod_{j=i+1}^d c_{j}
        }{
            \frac1Z \prod_{j=1}^{i-1} q(x_j\mid\boldsymbol{x}_{<j}) Z_i(\boldsymbol{x}_{<i}) \prod_{j=i+1}^d C_{j}
        }
        = \pars*{\prod_{j=i+1}^d \frac{c_j}{C_j}} \frac{q(x_i\mid\boldsymbol{x}_{<i})}{Z_i(\boldsymbol{x}_{<i})}
    $$
    as well as the upper bound
    $$
        p(x_i\mid \boldsymbol{x}_{<i})
        \le \frac{
            \frac1Z \prod_{j=1}^i q(x_j\mid\boldsymbol{x}_{<j}) \prod_{j=i+1}^d C_{j}
        }{
            \frac1Z \prod_{j=1}^{i-1} q(x_j\mid\boldsymbol{x}_{<j}) Z_i(\boldsymbol{x}_{<i}) \prod_{j=i+1}^d c_{j}
        }
        = \pars*{\prod_{j=i+1}^d \frac{C_j}{c_j}} \frac{q(x_i\mid\boldsymbol{x}_{<i})}{Z_i(\boldsymbol{x}_{<i})}
        .
    $$
    Now suppose that $\rho := \rho_1 = \ldots = \rho_d$ and observe that the factorisation of $p$ into conditional probabilities yields $p_1(x_1) \asymp \frac{K_d(x_1)}{K(x_1)}\dx[\rho(x_1)]$.
    Since the probability $\det(G^{\boldsymbol x})\dx[\rho^{\otimes d}(\boldsymbol{x})]$ is invariant under permutation of the points, the same bound for the marginal probability density holds for all $i=1,\ldots,d$.
\end{proof}

Proposition~\ref{prop:marginal} provides a way to approximately draw $\boldsymbol{x}\in\mcal{X}^d$ from $\det(G^{\boldsymbol{x}})\dx[(\rho_1\otimes\cdots\otimes\rho_d)(\boldsymbol{x})]$ by sequential conditional sampling, first drawing $x_1$, then $x_2$ given $x_1$, then $x_3$ given $x_1$ and $x_2$, and so on.

\subsection{Drawing a sample of size \texorpdfstring{$\boldsymbol{n>d}$}{n>d}}

For any $\boldsymbol{x}\in\mcal{X}^{n-1}$ and $y\in\mcal{X}$, we can decompose
$$
    P_{\mcal{V}_{\boldsymbol{x} \oplus y}} v = P_{\mcal{V}_{\boldsymbol{x}}} v + (v, \omega_y)_{\mcal{V}} \omega_y ,
    \qquad v\in\mcal{V} ,
$$
with $\omega_y := \frac{\tilde\omega_y}{\norm{\tilde\omega_y}_{\mcal{V}}}$ and $\tilde\omega_y := (I - P_{\mcal{V}_{\boldsymbol{x}}}) k(y, \bullet)$.
The Gramian matrix can thus be written in terms of the rank-$1$ update
\begin{equation}
\label{eq:G_rk1_update}
    G^{\boldsymbol{x}\oplus y}_{jk}
    = (P_{\mcal{V}_{\boldsymbol{x} \oplus y}} b_j, P_{\mcal{V}_{\boldsymbol{x} \oplus y}} b_k)_{\mcal{V}}
    = G^{\boldsymbol{x}}_{jk} + (g_{y})_j (g_{y})_k ,
\end{equation}
with $(g_{y})_j := (\omega_y, b_j)_{\mcal{V}}$. 
Assuming $\det(G^{\boldsymbol{x}})\ne 0$, we can factorise the determinant as
\begin{equation}
\label{eq:r_conditional_pdf}
    \det(G^{\boldsymbol{x}\oplus y})=\det(G^{\boldsymbol{x}} + g_yg_y^\intercal) = (1 + r(y\mid\boldsymbol{x})) \det(G^{\boldsymbol{x}})
    \qquad\text{with}\qquad
    r(y\mid\boldsymbol{x}) := g_y^\intercal (G^{\boldsymbol{x}})^{-1} g_y
\end{equation}
via the matrix determinant lemma.
To compute this explicitly, note that $\tilde\omega_y = k(y, \bullet) - k(y, \boldsymbol{x}) K(\boldsymbol{x})^{-1} k(\boldsymbol{x}, \bullet)$, and therefore $(\tilde \omega_y, b_j)_{\mcal{V}} = b(y) - b(\boldsymbol{x}) K(\boldsymbol{x})^{-1} k(\boldsymbol{x}, y)$ and $\norm{\tilde\omega_{y}}_{\mcal{V}}^2 = K(y) - k(y, \boldsymbol{x}) K(\boldsymbol{x})^{-1} k(\boldsymbol{x}, y)$.

\begin{proposition}
\label{prop:conditional_marginal}
    Let $n>d$ and $\rho_1,\ldots,\rho_n$ be a sequence of probability measures.
    Suppose that $\boldsymbol{x}\in\mcal{X}^{n-1}$ is distributed according to $\boldsymbol{x} \sim \det(G^{\boldsymbol{x}}) \dx[(\rho_1\otimes\cdots\otimes\rho_{n-1})(\boldsymbol{x})]$ and $y\in\mcal{X}$ is distributed according to $y\sim (1+r(y\mid\boldsymbol{x})) \dx[\rho_n(y)]$.
    Then
    $$
        \boldsymbol{x}\oplus y \sim \det(G^{\boldsymbol{x}\oplus y}) \dx[(\rho_1\otimes\cdots\otimes\rho_n)(\boldsymbol{x}\oplus y)] .
    $$
\end{proposition}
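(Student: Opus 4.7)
The plan is to form the joint density of $(\boldsymbol{x}, y)$ by combining the chain rule of conditional probabilities with the multiplicative identity~\eqref{eq:r_conditional_pdf}.

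First I would express the joint density of $(\boldsymbol{x}, y)$ with respect to $\rho_1\otimes\cdots\otimes\rho_n$ as the product of the marginal density of $\boldsymbol{x}$ and the conditional density of $y$ given $\boldsymbol{x}$; by the two hypotheses, these densities are proportional to $\det(G^{\boldsymbol{x}})$ and $1+r(y\mid\boldsymbol{x})$ respectively. Hence the joint density is proportional to
\[
    \det(G^{\boldsymbol{x}})\,(1+r(y\mid\boldsymbol{x})).
\]
Next, I would invoke~\eqref{eq:r_conditional_pdf}, which was obtained by applying the matrix determinant lemma to the rank-one update~\eqref{eq:G_rk1_update} of $G^{\boldsymbol{x}\oplus y}$, to recognise this product as $\det(G^{\boldsymbol{x}\oplus y})$. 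This immediately yields the asserted distributional identity
\[
    \boldsymbol{x}\oplus y \sim \det(G^{\boldsymbol{x}\oplus y})\dx[(\rho_1\otimes\cdots\otimes\rho_n)(\boldsymbol{x}\oplus y)].
\]

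I do not anticipate any real obstacle: the entire content of the proposition is essentially a one-line consequence of~\eqref{eq:r_conditional_pdf} packaged in probabilistic language via the conditional factorisation. The only point worth flagging is that the conditional normalisation $\int(1+r(y\mid\boldsymbol{x}))\dx[\rho_n(y)]$ may depend on $\boldsymbol{x}$ in general, so one should understand the proportionalities at the level of unnormalised densities, which is consistent with the notational convention used throughout the hypotheses and the conclusion. This mirrors the role played by the constancy hypothesis on the $Z_i$ in Proposition~\ref{prop:marginal}; here it is not needed, since the proposition only asserts a relation between unnormalised densities that is valid pointwise.
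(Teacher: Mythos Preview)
Your argument is essentially the paper's: both rely on the chain rule together with the factorisation~\eqref{eq:r_conditional_pdf}. The paper phrases it in the reverse direction---it computes the conditional density of the target joint $\det(G^{\boldsymbol{x}\oplus y})$ and identifies it as $(1+r(y\mid\boldsymbol{x}))/Z(\boldsymbol{x})$---but the content is identical. One small point the paper makes explicit and you leave implicit: since $\boldsymbol{x}\sim\det(G^{\boldsymbol{x}})$, one has $\det(G^{\boldsymbol{x}})\ne 0$ almost surely, which is precisely the hypothesis under which~\eqref{eq:r_conditional_pdf} was derived.

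Your final paragraph, however, resolves the $Z(\boldsymbol{x})$ issue incorrectly. Working ``at the level of unnormalised densities'' does not help: the joint density you obtain is $\det(G^{\boldsymbol{x}\oplus y})/\bigl(Z_1\,Z(\boldsymbol{x})\bigr)$, and if $Z(\boldsymbol{x})=\int(1+r(y\mid\boldsymbol{x}))\dx[\rho_n(y)]$ genuinely varies with $\boldsymbol{x}$, this is \emph{not} proportional to $\det(G^{\boldsymbol{x}\oplus y})$ by a global constant, so the distributional conclusion fails. The constancy of the normaliser is therefore needed here just as much as in Proposition~\ref{prop:marginal}, contrary to what you write. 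The paper's proof shares this gap---it verifies only that the conditional of the target joint matches the hypothesised conditional, without checking that the $\boldsymbol{x}$-marginal of $\det(G^{\boldsymbol{x}\oplus y})$ is proportional to $\det(G^{\boldsymbol{x}})$---and indeed the paper subsequently describes Algorithm~\ref{alg:det} as a \emph{heuristic} returning a sample ``approximately distributed'' according to $\det(G^{\boldsymbol{x}})$.
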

\begin{proof}
    Since $\boldsymbol{x} \sim \det(G^{\boldsymbol{x}})$, it holds that $\det(G^{\boldsymbol{x}}) \ne 0$ almost surely.
    We can, therefore, factorise the determinant as described in equation~\eqref{eq:r_conditional_pdf} and compute
    $$
        \int \det(G^{\boldsymbol{x}\oplus y}) \dx[\rho_n(y)]
        = \underbrace{\int (1+r(y\mid \boldsymbol{x}) \dx[\rho_n(y)]}_{=Z(\boldsymbol{x})} \det(G^{\boldsymbol{x}}) .
    $$
    Consequently, the conditional probability density of $p(\boldsymbol{x}\oplus y) \propto \det(G^{\boldsymbol{x}\oplus y})$ given $\boldsymbol{x}$ is given by
    \begin{align}
        p(y\mid \boldsymbol{x})
        &= \frac{p(\boldsymbol{x}\oplus y)}{\int p(\boldsymbol{x}\oplus y) \dx[\rho_n(y)]}
        = \frac{1+r(y\mid\boldsymbol{x})}{Z(\boldsymbol{x})}
        . \qedhere
    \end{align}
\end{proof}

We emphasize here the usefulness of Proposition~\ref{prop:conditional_marginal}, even in the case where we aim to obtain a sample of minimal size $n = d$.
While Proposition~\ref{prop:marginal} only yields approximate marginals, Proposition~\ref{prop:conditional_marginal} provides exact ones.
Since our proposed method involves subsampling (see Section~\ref{sec:subsampling}), it is not essential to draw the initial $d$ points optimally.

\begin{remark}
\label{rmk:marginal}
    Proposition~\ref{prop:conditional_marginal} proves that the conditional density of $\det(G^{\boldsymbol{x}})$, given $\boldsymbol{x}_{<n}$, is a mixture of the uniform density and $r(x_n\mid \boldsymbol{x}_{<n})$.
    To examine the implications of this result on the sample $\boldsymbol{x} \sim \det(G^{\boldsymbol{x}})\dx[\rho^{\otimes n}(\boldsymbol{x})]$, recall that
    $
        r(x_n\mid \boldsymbol{x}_{<n})
        = g_{x_n}^\intercal (G^{\boldsymbol{x}_{<n}})^{-1} g_{x_n}
        \le \lambda_{\mathrm{min}}(G^{\boldsymbol{x}_{<n}})^{-1} \norm{g_{x_n}}_2^2
    $
    and
    $$
        \norm{g_{x_n}}_2^2
        = \frac{1}{\norm{\tilde{\omega}_{x_n}}^2_{\mcal{V}}} \sum_{j=1}^d (\tilde{\omega}_{x_n}, b_j)_{\mcal{V}}^2
        = \frac{1}{\norm{\tilde{\omega}_{x_n}}^2_{\mcal{V}}} \Big\|\sum_{j=1}^d (\tilde{\omega}_{x_n}, b_j)_{\mcal{V}} b_j\Big\|_{\mcal{V}}^2
        = \frac{\norm{P_{\mcal{V}_d} \tilde{\omega}_{x_n}}_{\mcal{V}}^2}{\norm{\tilde{\omega}_{x_n}}^2}
        \le 1
        .
    $$
    We can thus conclude from Lemma~\ref{lem:mu_lmin} that $0 \le r({x_n}\mid\boldsymbol{x}_{<n})\le \mu(\boldsymbol{x}_{<n})^2$.
    This indicates that when $\mu(\boldsymbol{x}_{<n})$ is small, the conditional density of $x_n$ contains a non-negligible uniform part.
    I.e., for large sample sizes $n$, many of the points $x_i$ will be i.i.d.\ with respect to $\rho$.
    This once again underscores the importance of the choice of $\rho$ discussed in Remark~\ref{rmk:choice}.
\end{remark}


Algorithm~\ref{alg:det} presents a heuristic sampling algorithm returning a sample $\boldsymbol{x} \in \mcal{X}^n$ which is approximately distributed according to $\det(G^{\boldsymbol{x}}) \dx[(\rho_1\otimes \dots \otimes \rho_n)](\boldsymbol{x})$ and with $n$ such that $\mu(\boldsymbol{x}) \le \mu_\star$.

\begin{algorithm}
    \caption{Heuristic determinantal sampling algorithm}\label{alg:det}
    \KwData{$\mu_\star > 1$ and sequence of probability measures $\braces{\rho_k}_{k\in\mbb{N}}$}
    \KwResult{$\boldsymbol{x}\in\mcal{X}^n$ satisfying $\mu(\boldsymbol{x})\le\mu_\star$}
    Set $\boldsymbol{x} := \emptyset$\\
    \For{$i := 1$ \KwTo $d$}{
        Draw $x_i \sim q(\bullet | \boldsymbol{x}) \rho_i$ with $q$ as in equation~\eqref{eq:product_formula}\\
        Update $\boldsymbol{x} := \boldsymbol{x} \oplus  \{x_i\}$\\
    }
    \While{$\mu(\boldsymbol{x}) > \mu_\star$}{
        Set $i := i+1$\\
        Draw $x_i \sim (1 + r(\bullet\mid\boldsymbol{x}))\rho_i$ with $r$ as in equation~\eqref{eq:r_conditional_pdf}\\
        Update $\boldsymbol{x} := \boldsymbol{x} \oplus \{x_i\}$\\
    }
\end{algorithm}

\subsection{Theoretical guarantees}
\label{sec:th-guarantees}

The preceding sections establish that sampling from the (unnormalised) probability measure $\det(G^{\boldsymbol{x}})\dx[\rho^{\otimes n}(\boldsymbol{x})]$ (or its generalisation $\det(G^{\boldsymbol{x}}) \dx[(\rho_1\otimes \dots \otimes \rho_n)](\boldsymbol{x})$) is well-defined and provides a concrete strategy for sampling from this distribution. 
However, as discussed in Remark~\ref{rmk:choice}, the introduction of $\rho$ is artificial, and a proper choice of this measure is important to ensure $\mu(\boldsymbol{x}) \le \mu^\star$ with high probability.
The subsequent section provides probability bounds for the event $\mu(\boldsymbol{x}) \le \mu^\star$ under different choices of sampling measures.

\begin{theorem}
\label{thm:iid_bounds}
    Let $\nu$ be a measure on $\mcal{X}$ and suppose that the embedding constant
    $$
        \tilde{C}_d := \max_{v\in\mathcal{V}_d} \frac{\|v\|_{\mathcal{V}}}{\|v\|_{L^2(K^{-1}\nu)}}
    $$
    is finite.
    Moreover, define the classical \emph{inverse Christoffel function}
    \begin{equation}
    \label{eq:christoffel}
        \mathfrak{K}(x)
        := \sup_{v\in\mathcal{V}_d} \frac{v(x)^2}{\|v\|_{L^2}^2} \;.
    \end{equation}
    Let $w$ be a weight function satisfying $w\ge 0$ and $\int w^{-1}\dx[\nu] = 1$.
    Then $w^{-1}\nu$ is a probability measure and we can draw $x_i\sim w^{-1}\nu$ i.i.d.\ for $i=1,\ldots, n$.
    Then, for all $\beta> 1$,
    \begin{equation}
        \mathbb{P}\big[
            \mu(\boldsymbol{x})
            \le \|wK\|_{L^\infty(\nu)}^{1/2} \tilde{C}_d \beta
        \big]
        \ge 1 - 2d \exp\bigg(
            -\frac{n}{2} \|w\mathfrak{K}\|_{L^\infty(\nu)}^{-1} (1-\beta^{-2})^2
        \bigg) \;.
    \end{equation}
    Moreover, if $\int K\dx[\nu] < \infty$, the choice $\beta=\sqrt{2}$ and $w = 2\big(\tfrac{\mathfrak{K}}{\|\mathfrak{K}\|_{L^1(\nu)}} + \tfrac{K}{\|K\|_{L^1(\nu)}}\big)^{-1}$ simplifies the bound to
    \begin{align}
        \mathbb{P}\big[
            \mu(\boldsymbol{x})
            \le 2 \|K\|_{L^1(\nu)}^{1/2} \tilde{C}_d
        \big]
        &\ge 1 - 2d \exp\bigg(
            -\frac{n}{16d}
        \bigg) \;.
    \end{align}
\end{theorem}
\begin{proof}
    Define the empirical $L^2(\nu)$ norm
    $$
        \|v\|_{L^2, \boldsymbol{x}}^2 := \frac1n \sum_{i=1}^n w(x_i) v(x_i)^2 .
    $$
    Recall that $\mu(\boldsymbol{x}) = \sup_{v\in\mathcal{V}_d}\frac{\|v\|_{\mathcal{V}}}{\|v\|_{\boldsymbol{x}}}$ and, therefore, for every $\alpha > 0$
    \begin{equation}
    \label{eq:P_mu_le_c}
        \mathbb{P}\big[
            \mu(\boldsymbol{x})
            \le \alpha
        \big]
        = \mathbb{P}\big[
            \forall v\in\mathcal{V}_d:
            \|v\|_{\boldsymbol{x}}^2
            \ge \tfrac{1}{\alpha^2} \|v\|_{\mathcal{V}}^2
        \big] \;.
    \end{equation}
    Now, observe that $\|v\|_{\boldsymbol{x}}^2 = \|P_{\mathcal{V}_{\boldsymbol{x}}} v\|_{\mathcal{V}}^2 \ge \|P_{\mathcal{V}_{x_i}} v\|_{\mathcal{V}}^2 = K(x_i)^{-1} v(x_i)^2$ for all $i=1,\ldots,n$ and, therefore,
    $$
        \|v\|_{\boldsymbol{x}}^2
        \ge \frac1n\sum_{i=1}^n K(x_i)^{-1} v(x_i)^2
        \ge \frac1n\sum_{i=1}^n \underbrace{\frac{w(x_i)K(x_i)}{\|wK\|_{L^\infty(\nu)}}}_{\le 1} K(x_i)^{-1} v(x_i)^2
        = \|wK\|_{L^\infty(\nu)}^{-1} \|v\|_{L^2,\boldsymbol{x}}^2 \;.
    $$
    Plugging this bound into~\eqref{eq:P_mu_le_c}, we obtain
    \begin{align}
        \mathbb{P}\big[
            \mu(\boldsymbol{x})
            \le \alpha
        \big]
        &\ge \mathbb{P}\big[
            \forall v\in\mathcal{V}_d:
            \|wK\|_{L^\infty(\nu)}^{-1} \|v\|_{L^2,\boldsymbol{x}}^2
            \ge \tfrac{1}{\alpha^2} \|v\|_{\mathcal{V}}^2
        \big] \\
        &\ge \mathbb{P}\big[
            \forall v\in\mathcal{V}_d:
            \|v\|_{L^2,\boldsymbol{x}}^2
            \ge \|wK\|_{L^\infty(\nu)} \tfrac{1}{\alpha^2} \tilde{C}_d^2 \|v\|_{L^2(\nu)}^2
        \big] \;.
    \end{align}
    Now, we define $\beta = \|wK\|_{L^\infty(\nu)}^{-1/2} \tilde{C}_d^{-1} \alpha$ and use the classical optimal sampling bound~\cite{cohen_2017_optimal}
    \begin{align}
        \mathbb{P}\big[
            \mu(\boldsymbol{x})
            \le \|wK\|_{L^\infty(\nu)}^{1/2}  \tilde{C}_d \beta
        \big]
        &\ge \mathbb{P}\big[
            \forall v\in\mathcal{V}_d:
            \|v\|_{L^2,\boldsymbol{x}}^2
            \ge \tfrac{1}{\beta^2} \|v\|_{L^2(\nu)}^2
        \big] \\
        &\ge 1 - 2d \exp\bigg(
            -\frac{n}{2} \|w\mathfrak{K}\|_{L^\infty(\nu)}^{-1} (1-\beta^{-2})^2
        \bigg) \;.
    \end{align}
    Let $\{\tilde{b}_{j}\}_{j=1,\ldots,d}$ be an $L^2(\nu)$-orthonormal basis of $\mathcal{V}_d$.
    Then $\mathfrak{K} = \sum_{j=1}^d \tilde{b}_j^2$ implies $\|\mathfrak{K}\|_{L^1(\nu)} = d < \infty$ and, thus, that $w = 2\big(\tfrac{\mathfrak{K}}{\|\mathfrak{K}\|_{L^1(\nu)}} + \tfrac{K}{\|K\|_{L^1(\nu)}}\big)^{-1}$ is a well-defined weight function.
    Using this trick, originally due to~\cite{haberstich2022boosted}, it holds that $\|wK\|_{L^\infty(\nu)} \le 2 \|K\|_{L^1(\nu)}$ and $\|w\mathfrak{K}\|_{L^\infty(\nu)} \le 2 \|\mathfrak{K}\|_{L^1(\nu)} = 2d$.
    Plugging these bounds into the first result yields
    \begin{align}
        \mathbb{P}\big[
            \mu(\boldsymbol{x})
            \le \sqrt{2} \|K\|_{L^1(\nu)}^{1/2} \tilde{C}_d \beta
        \big]
        &\ge 1 - 2d \exp\bigg(
            -\frac{n}{4d} (1-\beta^{-2})^2
        \bigg) \;. \qedhere
    \end{align}
\end{proof}

\begin{minipage}{\textwidth}
\begin{remark}
    In contrast to Corollary~\ref{cor:ayoub}, using continuous volume sampling, the above result is achieved with an i.i.d.\ sample.
\end{remark}
\begin{remark}   
    Although the density $\nu$ is a parameter that can reduce the embedding constant $\tilde{C}_d$, we have that 
    $$
        \tilde{C}_d
        = \max_{v\in\mathcal{V}_d} \frac{\|v\|_{\mathcal{V}}}{\|v\|_{L^2(K^{-1}\nu)}}
        \ge \max_{v\in\mathcal{V}_d} \frac{\|v\|_{\mathcal{V}}}{\|\tfrac{v}{K}\|_{C^0(\mathcal{X})}}
    $$
    for all probability measures $\nu$.
    For many RKHS, like $H^1([-1, 1], \tfrac12\dx)$ and $H^1(\mathbb{R}, \mathcal{N}(0,1))$, we have that $K\gtrsim 1$ (see section~\ref{sec:experiments}).
    Using uniformly bounded bases $\{b_k\}_{k\in\mathbb{N}}$ with $\|b_k\|_{C^0(\mathcal{X})} \asymp 1$ (e.g.\ the Fourier basis), it is easy to see that, similar to the constant $C_d$ from Corollary~\ref{cor:ayoub}, the constant $\tilde{C}_d$ can grow very rapidly.
\end{remark}
\end{minipage}

The experiments in section~\ref{sec:experiments} demonstrate that
\begin{itemize}
    \item the error bounds from the preceding theorem are suboptimal (we see $\mathcal{O}(d\log(d))$ rates), and
    \item the improvement of non-i.i.d.\ sampling is quite substantial (reducing the rate to $\mathcal{O}(d)$).
\end{itemize}
The following Theorem~\ref{thm:expectation} proves that the proposed Algorithm \ref{alg:det} necessarily improves over i.i.d.\ sampling, although it does not quantify by how much.

Under the assumptions of Proposition~\ref{prop:marginal},
the marginals of $\det(G^{\boldsymbol{x}})\dx[\rho^{\otimes n}(\boldsymbol{x})]$ with $\rho \propto f \frac{K}{K_d}\nu$ are approximately $f\nu$ for any probability density function $f$.
Theorem~\ref{thm:iid_bounds} hence suggests to use one of the densities $f = \frac{\mathfrak{K}}{d}$ or $f = \tfrac{1}{2}(\tfrac{\mathfrak{K}}{d} + \tfrac{K}{\|K\|_{L^1(\nu)}})$.
These densities ensure that a sample size of $n\gtrsim d\log(d)$ is sufficient to reach a certain $\mu(\boldsymbol{x})\le\mu^\star$ with high probability.
The subsequent theorem implies that sampling from the non-product density $\det(G^{\boldsymbol{x}})\dx[\rho^{\otimes n}(\boldsymbol{x})]$ improves over the i.i.d.\ sampling with respect to the approximate marginals (in the sense that $\log\det G^{\boldsymbol{x}}$ is larger in expectation).

\begin{theorem}\label{thm:expectation}
    Let $\nu$ be a probability measure on $\mcal{X}$ and $\rho = \tfrac1Z \det(G^{\boldsymbol{x}}) \dx[\nu^{\otimes n}(\boldsymbol{x})]$ with $Z = \int \det(G^{\boldsymbol{x}})\dx[\nu^{\otimes n}(\boldsymbol{x})]$.
    Moreover, let $\tilde\rho$ denote the marginal distribution of $\rho$ with respect to any one variable $x$.
    Suppose that $\boldsymbol{x}\in\mcal{X}^n$ is distributed according to $\rho$ and $\tilde{\boldsymbol{x}}\in\mcal{X}^n$ is distributed according to the product measure $\tilde{\rho}^{\otimes n}$.
    Then,
    $$
        \mbb{E}\bracs*{\log\det G^{\boldsymbol{x}}} 
        \ge \mbb{E}\bracs{\log\det G^{\tilde{\boldsymbol{x}}}} 
        .
    $$
\end{theorem}
\begin{proof}
    Recall that the probability density of $\rho$ with respect to $\nu^{\otimes n}$ is given by $P(x) := \tfrac1Z \det(G^{\boldsymbol{x}})$. Let $p$ denote the density of $\tilde\rho$ with respect to $\nu$ and define $Q = p^{\otimes n}$.
    The claim is then equivalent to
    \begin{align}
        &\mathbb{E}[\log\det G^{\boldsymbol{x}}]
        \ge \mathbb{E}[\log\det G^{\tilde{\boldsymbol{x}}}] \\
        \Leftrightarrow\quad&\mathbb{E}_{\boldsymbol{x}\sim P}\left[\log(ZP(\boldsymbol{x}))\right]
        \ge \mathbb{E}_{\boldsymbol{x}\sim Q}\left[\log(ZP(\boldsymbol{x}))\right] \\
        \Leftrightarrow\quad&\mathbb{E}_{\boldsymbol{x}\sim P}\left[\log(P(\boldsymbol{x}))\right]
        - \mathbb{E}_{\boldsymbol{x}\sim Q}\left[\log(P(\boldsymbol{x}))\right] \ge 0 \label{eq:E-difference}
        .
    \end{align}
    Recall the definitions of the (cross-)entropy and Kullback--Leibler divergence
    $$
        H(Q, P) := - \mathbb{E}_{\boldsymbol{x}\sim Q} \left[\log(P(\boldsymbol{x}))\right],
        \quad
        H(P) := H(P, P)
        \quad\text{and}\quad
        \operatorname{KL}(Q \,\|\, P) := H(Q, P) - H(Q)
    $$
    for two densities $P$ and $Q$.
    Using these definitions, we can reformulate equation~\eqref{eq:E-difference} as
    \begin{align}
        \mathbb{E}_{\boldsymbol{x}\sim P}\left[\log(P(\boldsymbol{x}))\right]
        - \mathbb{E}_{\boldsymbol{x}\sim Q}\left[\log(P(\boldsymbol{x}))\right] \ge 0
        \quad&\Leftrightarrow\quad
        -H(P) + H(Q,P) \ge 0 \\
        \quad&\Leftrightarrow\quad
        H(Q,P) - H(Q) + H(Q) - H(P) \ge 0 \\
        \quad&\Leftrightarrow\quad
        \operatorname{KL}(Q \,\|\, P) + H(Q) - H(P) \ge 0 \label{eq:KL+H-difference}
        .
    \end{align}
    Now, recall that the entropy is subadditive in general and additive for product measures, i.e.\ that
    $$
        H(P) \le n H(p) = H(p^{\otimes n}) = H(Q) .
    $$
    This implies $H(Q) - H(P) \ge 0$, and since also $\operatorname{KL}(Q\,\|\,P)\ge 0$, equation~\eqref{eq:KL+H-difference} holds true.
\end{proof}

\section{Subsampling}
\label{sec:subsampling}

The preceding section proposes drawing samples according to the (unnormalised)  distribution $\det(G^{\boldsymbol{x}})d\rho^{\otimes n}(\boldsymbol{x})$, with the intuition that the samples will concentrate where the determinant is large.
This section explores the option to reduce the size of an already given sample.
For this purpose, we suppose that a finite sample $\mcal{D} := \braces{x_1, \ldots, x_n} \subseteq \mcal{X}$ is given, satisfying $\mu(\mcal{D}) \le \mu_\star$,  and we seek a subset $S^\star\subseteq\mcal{D}$ of minimal size satisfying the condition $\mu(\mcal{D}) \le \mu(S^\star) \le \mu^\star$. 
\begin{remark}
    Subsampling algorithms for the weighted least squares projection~\eqref{eq:wls_projection} have already been proposed in~\cite{haberstich2022boosted,bartel2023}.
    These algorithms exploit the fact that the Gram matrix for weighted least squares is a sum of $n$ rank-one matrices $\sum_{i=1}^n a(x_i) a(x_i)^\intercal$, each matrix depending on a single sample point $x_i$.
    These algorithms can not be applied to the Gram matrix $G^{\boldsymbol{x}}$~\eqref{eq:def:Gramian_x} associated with our kernel-based projection.
\end{remark}

As a direct consequence of the definition, the quasi-optimality factor $\mu(S)$ decreases monotonically with increasing sample size $\abs{S}$.
This means that in contrast to classical least squares approximation, adding new sample points is guaranteed to improve the quasi-optimality constant.
This suggests using an incremental, greedy algorithm for sample selection.
Such greedy algorithms enjoy strong convergence guarantees, provided that the objective function is non-negative, monotone and submodular.
Since a lot of the theory on greedy optimisation is formulated in the setting of maximisation, we will adopt this convention and replace the minimisation of $\mu$ with the equivalent maximisation of $\lambda(S) := \mu(S)^{-2}$.
Monotonicity and submodularity of this function can then be defined as follows.

\begin{definition}[Monotonicity]
    Let $\mcal{D}$ be a set, and let $f : 2^{\mcal{D}} \to [0, \infty)$ be a non-negative set function.
    $f$ is called monotone if $f(S) \le f(T)$ whenever $S \subseteq T$.
\end{definition}

\begin{definition}[Submodularity]
    Let $\mcal{D}$ be a set, and let $f : 2^{\mcal{D}} \to [0, \infty)$ be a non-negative, monotone set function.
    $f$ is called submodular if $f(S \cup \braces{v}) - f(S) \ge f(T \cup \braces{v}) - f(T)$ for all $S \subseteq T$ and for all $v\in \mcal{D}$.
\end{definition}

Now suppose that $\lambda$ were non-negative, monotone, and submodular, and consider the algorithm defining the set sequence
$$
    S_0 := \emptyset,
    \qquad
    S_{k+1} = S_k \cup \braces*{v_{k+1}}, \quad v_{k+1} \in   \argmax_{v\in\mcal{D}} \lambda(S_k\cup\braces{v}) .
$$
By a classical result of~\cite{Nemhauser1978}, this algorithm achieves a quasi-optimal function value
$$
    \lambda(S_k) \ge (1 - \tfrac1e) \lambda(S^\star_k),
$$
when $S^\star_k$ is an optimal set of cardinality $k$.
A related theorem of~\cite{Wolsey1982} ensures a similar bound for the ``complementary'' problem of
finding the smallest set $S$ with $\lambda(S) \ge C$.
Defining $k(C) := \min \braces{k : \lambda(S_k) \ge C}$ and $k^\star(C) := \min \braces{\abs{S} : \lambda(S) \ge C}$ for $C \le
\lambda(\mcal{D})$, it holds that
$$
    k(C) \le \pars*{1 + \log\big(\tfrac{C}{C - \lambda(S_{k(C)-1})}\big)} k^\star(C) .
$$
These bounds would justify approximating the sought set $S^\star$ through efficient, greedy algorithms.
%
%
Unfortunately, however, the subsequent proposition shows that $\lambda$ is not submodular.
The proof of this fact can be found in Appendix~\ref{app:proof:prop:not_submodular}.

\begin{proposition}
\label{prop:not_submodular}
    $\lambda$ is monotone but not submodular.
\end{proposition}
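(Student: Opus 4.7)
The plan is to first establish monotonicity via the variational characterization, and then disprove submodularity by exhibiting an explicit counterexample built on the rank deficiency of $G^S$ for small $S$.

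For monotonicity, I would use the representation
$$
    \lambda(S) = \mu(S)^{-2} = \min_{v\in\mcal{V}_d\setminus\{0\}} \frac{\|v\|_S^2}{\|v\|_{\mcal{V}}^2} = \min_{v\in\mcal{V}_d\setminus\{0\}} \frac{\|P_{\mcal{V}_S}v\|_{\mcal{V}}^2}{\|v\|_{\mcal{V}}^2}.
$$
Whenever $S\subseteq T$, we have $\mcal{V}_S\subseteq\mcal{V}_T$, and by the variational characterisation of orthogonal projections $\|P_{\mcal{V}_S}v\|_{\mcal{V}}\le \|P_{\mcal{V}_T}v\|_{\mcal{V}}$ for every $v\in\mcal{V}$. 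Taking the minimum over $v\in\mcal{V}_d\setminus\{0\}$ then yields $\lambda(S)\le\lambda(T)$, and non-negativity follows since $G^S$ is positive semidefinite. This is essentially a one-line argument once the representation above is recalled from Theorem~\ref{thm:error_bound_noiseless}.

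For non-submodularity, I would exploit the crucial observation that $G^S$ has rank at most $|S|$, so $\lambda(S) = \lambda_{\mathrm{min}}(G^S) = 0$ whenever $|S| < d$. This gives a clean counterexample: pick any $d\ge 2$, let $S := \emptyset$ and choose $T\subseteq\mcal{D}$ with $|T| = d-1$ together with a point $v\in\mcal{D}\setminus T$ such that $G^{T\cup\{v\}}$ is of full rank $d$ (which exists as soon as $\mcal{D}$ is rich enough to separate the elements of $\mcal{V}_d$, as guaranteed by Proposition~\ref{prop:mu_properties}(iv)). Then
$$
    \lambda(S\cup\{v\}) - \lambda(S) = 0 - 0 = 0 < \lambda(T\cup\{v\}) - \lambda(T),
$$
since $|S\cup\{v\}| = 1 < d$ while $|T\cup\{v\}| = d$ yields a strictly positive minimum eigenvalue. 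This contradicts the defining inequality of submodularity.

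The main obstacle is not the argument itself but ensuring the counterexample is non-vacuous: one must verify that, within the settings considered in the paper, a dataset $\mcal{D}$ of size $\ge d$ actually exists with the required non-degeneracy. This is already implicit in the fact that the subsampling problem considered in Section~\ref{sec:subsampling} assumes $\mu(\mcal{D})\le\mu_\star<\infty$, forcing $\lambda(\mcal{D})>0$ and thereby the existence of at least one $d$-subset of $\mcal{D}$ on which $G$ is invertible; restricting attention to such a subset produces the required pair $(T,v)$. The remaining work is bookkeeping, so I would defer the explicit verification to the appendix.
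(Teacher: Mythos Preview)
Your monotonicity argument is essentially identical to the paper's. Your non-submodularity argument is correct but takes a genuinely different route.

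The paper does \emph{not} use the rank deficiency of $G^S$ for $|S|<d$. Instead, it fixes $\boldsymbol{x}'\subseteq\boldsymbol{x}$ and a point $y$, \emph{chooses} $\mcal{V}_d := \mcal{V}_{\boldsymbol{x}'}+\langle v\rangle$ for a unit vector $v\perp\mcal{V}_{\boldsymbol{x}'}$ to be specified, and observes that for any $\boldsymbol{z}\supseteq\boldsymbol{x}'$ this forces $\lambda(\boldsymbol{z})=\|P_{\mcal{V}_{\boldsymbol{z}}}v\|_{\mcal{V}}^2$. Decomposing $k(y,\bullet)=\omega_1+\omega_2+\omega_3$ with $\omega_1\in\mcal{V}_{\boldsymbol{x}'}$, $\omega_2\in\mcal{V}_{\boldsymbol{x}}\ominus\mcal{V}_{\boldsymbol{x}'}$, $\omega_3\in\mcal{V}_{\boldsymbol{x}}^\perp$, the submodularity inequality reduces to
\[
\frac{(\omega_3,v)_{\mcal{V}}^2}{\|\omega_3\|_{\mcal{V}}^2}\le\frac{(\omega_2+\omega_3,v)_{\mcal{V}}^2}{\|\omega_2+\omega_3\|_{\mcal{V}}^2},
\]
which fails for $v=\omega_3$.

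Your argument is shorter and more elementary: it only needs $d\ge 2$, Proposition~\ref{prop:mu_properties}(ii), and the existence of a $d$-subset of $\mcal{D}$ on which $G$ is invertible. (Your justification of this last point is slightly implicit; the cleanest way is to note that $\lambda(\mcal{D})>0$ is equivalent to injectivity of $v\mapsto v|_{\mcal{D}}$ on $\mcal{V}_d$, i.e.\ $b(\mcal{D})$ has rank $d$, so some $d\times d$ minor $b(T')$ is invertible, and this is equivalent to $\mcal{V}_d\cap\mcal{V}_{T'}^\perp=\{0\}$, hence $\lambda(T')>0$.) What the paper's construction buys is that its counterexample lives entirely in the regime where \emph{all} sets involved can have $\lambda>0$; this supports the paper's subsequent claim that $\lambda$ is ``not too far from submodular'' in the interesting regime, whereas your counterexample exploits a degenerate boundary effect that the greedy algorithm never actually encounters once $|S|\ge d$.
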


This result might seem quite surprising initially because one might intuitively assume that the marginal gains of orthogonal projections become smaller with increasing sample size.
One would intuitively assume that $\lambda$ is submodular, and the construction in Appendix~\ref{app:proof:prop:not_submodular} indeed suggests that $\lambda$ is not ``too far'' from being submodular.
Moreover, it seems natural that the optimality guarantees of submodular functions extend to functions that are ``close'' to being submodular.
This intuition can be formalised by defining the submodularity ratio $\gamma$ as a measure of ``approximate submodularity''~\cite{Das2018}.

\begin{definition}[Submodularity Ratio]
    Let $\mcal{D}$ be a set, and let $f : 2^{\mcal{D}} \to [0, \infty)$ be a non-negative, monotone set function.
    The submodularity ratio of $f$ with respect to a set $U\subseteq\mcal{D}$ and a parameter $k\ge1$ is
    $$
        \gamma_{U,k}(f)
        := \min_{L\subseteq U, S : \abs{S}\le k, L\cap S=\emptyset} \frac{\sum_{s\in S} [ f(L\cup\braces{s}) - f(L)]}{f(L\cup S) - f(L)} ,
    $$
    with the convention that $0/0 = 1$.
\end{definition}

The submodularity ratio captures how much more $f$ can increase by adding any subset $S$ of size $k$ to $L$, compared to the combined benefits of adding its individual elements to $L$.
In particular, a function $f$ is submodular precisely when $\gamma_{U,k} \ge 1$ for all $U$ and $k$~\cite[Proposition~3]{Das2018}.
In this way, the submodularity ratio quantifies the submodularity of $f$.
Moreover, the classical results of~\cite{Wolsey1982} on submodular functions extend to approximately submodular functions by means of the subsequent result of~\cite{Das2018}.

\begin{theorem}[Theorem~11~in~\cite{Das2018}]
\label{thm:submodular_bound}
    For any $\varepsilon \in (0,1)$ and $C>0$, let $k := k((1-\varepsilon)C)$ and $k^\star := k^\star(C)$.
    Then
    $$
        k \le \tfrac1{\gamma_{S_{k}, k^\star}(\lambda)} \log(\varepsilon^{-1}) k^\star .
    $$
\end{theorem}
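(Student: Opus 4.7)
The plan is to establish a geometric decay for the optimality gap $C - \lambda(S_i)$ along the greedy trajectory, in the spirit of the classical Wolsey analysis, with the submodularity ratio $\gamma := \gamma_{S_k,k^\star}(\lambda)$ playing the role of the ``$1$'' that one would have in the truly submodular case. The target inequality reads
\begin{equation}
    C - \lambda(S_{i+1}) \;\le\; \bigl(1 - \tfrac{\gamma}{k^\star}\bigr)\bigl(C - \lambda(S_i)\bigr)
    \qquad\text{for all } 0\le i < k,
\end{equation}
and once this is in hand, an unrolling together with $1 - t \le e^{-t}$ yields $C - \lambda(S_k) \le e^{-k\gamma/k^\star}C$, which is $\le \varepsilon C$ as soon as $k \ge \tfrac{k^\star}{\gamma}\log(1/\varepsilon)$, giving the statement.

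To prove the one-step inequality, fix an optimal set $S^\star$ of cardinality $k^\star$ with $\lambda(S^\star)\ge C$. First, by the greedy selection rule the marginal gain dominates the largest gain obtainable by adding any single element of $S^\star\setminus S_i$, and by a trivial averaging argument that largest gain is bounded below by
\begin{equation}
    \tfrac{1}{|S^\star\setminus S_i|}\sum_{s\in S^\star\setminus S_i}\bigl[\lambda(S_i\cup\{s\})-\lambda(S_i)\bigr].
\end{equation}
Second, apply the definition of the submodularity ratio with $L=S_i\subseteq S_k$ and $S=S^\star\setminus S_i$, which has size at most $k^\star$, to convert this sum of single-element gains into the ``joint'' gain $\lambda(S_i\cup S^\star)-\lambda(S_i)$, losing only the factor $\gamma_{S_k,k^\star}(\lambda)$. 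Finally, invoke monotonicity of $\lambda$ to replace $\lambda(S_i\cup S^\star)$ by $\lambda(S^\star)\ge C$, and bound $|S^\star\setminus S_i|\le k^\star$ in the averaging denominator, producing the geometric-decay inequality stated above.

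Two points will require care but are essentially bookkeeping. First, the submodularity ratio must be the \emph{same} constant $\gamma = \gamma_{S_k,k^\star}(\lambda)$ at every iteration $0\le i<k$; this is ensured by the monotonicity of $\gamma_{U,\kappa}(\lambda)$ in both arguments (the defining minimum is taken over a strictly larger family when $U$ or $\kappa$ grows), so that $\gamma_{S_i,|S^\star\setminus S_i|}(\lambda)\ge \gamma_{S_k,k^\star}(\lambda)$ for every $i\le k$, and the bound we derive is valid with the uniform constant $\gamma$. Second, one should handle the edge cases $\lambda(S_i)=C$ (in which case the process has already terminated and $k\le i< k^\star\log\varepsilon^{-1}/\gamma$ is automatic) and $\lambda(S_i\cup S^\star)=\lambda(S_i)$ (where the ratio is interpreted as $1$ by the $0/0=1$ convention, and the induction is trivial).

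The main obstacle, and the only non-mechanical part of the argument, is tracking the submodularity ratio uniformly in $i$: one must verify that the ratio $\gamma_{S_k,k^\star}(\lambda)$ appearing in the theorem — evaluated at the \emph{final} greedy set $S_k$ and the \emph{optimal} cardinality $k^\star$ — really does lower-bound the effective ratio at each earlier step. All other ingredients (monotonicity of $\lambda$, averaging, greedy maximality, and the $\log$-linear unrolling) are standard and require no new idea beyond the definitions already introduced.
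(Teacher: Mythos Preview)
The paper does not prove this statement; it is quoted verbatim as Theorem~11 of~\cite{Das2018} and used as a black box. Your proposal reconstructs precisely the standard Wolsey-type argument from that reference: bound the greedy increment from below via the averaging inequality over $S^\star\setminus S_i$, convert the sum of single-element gains into the joint gain using the submodularity ratio (with $L=S_i\subseteq S_k$ and $|S|\le k^\star$, so that the uniform constant $\gamma_{S_k,k^\star}(\lambda)$ applies at every step), invoke monotonicity to reach $C$, and unroll the resulting geometric decay. This is correct and matches the proof in the cited source, so there is nothing to compare against in the present paper.
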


Recall from Lemma~\ref{lem:mu_is_gramian_error} that
$$
    \lambda(\boldsymbol{x})
    := \mu(\boldsymbol{x})^{-2}
    \ge 1 - \operatorname{tr}(I - G^{\boldsymbol{x}})
    = \operatorname{tr}(G^{\boldsymbol{x}}) - d + 1 .
$$
Instead of maximising $\lambda$, we propose to maximise the surrogate
$$
    \eta(\boldsymbol{x})
    := \operatorname{tr}(G^{\boldsymbol{x}})
    = \sum_{j=1}^d \norm{P_{\mcal{V}_{\boldsymbol{x}}} b_j}_{\mcal{V}}^2 .
$$
The subsequent proposition~\ref{prop:approx_submodular} proves that this function is approximately submodular.
However, replacing the spectral norm with the Hilbert--Schmidt norm, as done in Lemma~\ref{lem:mu_is_gramian_error}, comes at the cost of a slightly suboptimal bound since the Hilbert--Schmidt norm can exceed the spectral norm by a factor of $d^{1/2}$ (cf.~\cite[Remark~3.1]{Binev2018}).

Since our sampling strategy in section~\ref{sec:sampling} is motivated as a surrogate for maximising the function $\boldsymbol{x}\mapsto\log\det(G^{\boldsymbol{x}})$, it may seem more natural to use the function $S \mapsto \log\det(G^{\boldsymbol{x}_S})$ for subsampling.
Indeed, when $H^{\boldsymbol{x}\oplus y}$ can be written as a rank-one update
$H^{\boldsymbol{x} \oplus y} = H^{\boldsymbol{x}} + h_{y}h_{y}^\intercal$, it can be shown~\cite{Cortesi2014} that the functions
$$
    S \mapsto - \operatorname{tr}((H^{\boldsymbol{x}_S})^{-1})
    \qquad\text{and}\qquad
    S \mapsto \log\det(H^{\boldsymbol{x}_S})
$$
are submodular.
However, since the $g_y$-terms in the rank-one update~\eqref{eq:G_rk1_update} of $G^{\boldsymbol{x}\oplus y}$ depend on $\boldsymbol{x}$, the arguments from~\cite{Cortesi2014} do not transfer directly.
Although it may be possible to show (approximate) submodularity for these functions, we focus on the more straightforward case of $\eta$.
Moreover, for the purpose of finding a subsample $\boldsymbol{x}_S$ of close-to-optimal size $\abs{S}\approx d$, it is better to optimise the function $\eta$ (at least for the first $d$ iterations).
The functions $S \mapsto - \operatorname{tr}((H^{\boldsymbol{x}_S})^{-1})$ and $S \mapsto \log\det(H^{\boldsymbol{x}_S})$ are both constant for $\abs{S}<d$.

The greedy maximisation of $\eta$ is a specialised version of the $\mathrm{SDS}_{\mathrm{OMP}}$ algorithm for dictionary selection presented in~\cite[Section~4.2]{Das2018} and comes with the same guarantees on monotonicity and approximate submodularity, as is shown in the subsequent proposition.

\begin{proposition}
\label{prop:approx_submodular}
    Define $C_{i,j} := \frac{k(x_i, x_j)}{\sqrt{K(x_i)K(x_j)}}$ and denote by $C_{S,S}$ the restriction of $C$ onto the indices contained in $S$.
    Moreover, let $\lambda_{\mathrm{min}}(C, k) := \min_{\abs{S} \le k} \lambda_{\mathrm{min}}(C_{S,S})$ denote the \emph{minimal $k$-sparse eigenvalue of $C$}.
    Then, $\eta$ is monotone and approximately submodular with
    $$
        \gamma_{U,k}(\eta)
        \ge \lambda_{\mathrm{min}}(C, \abs{U}+k) \ge \lambda_{\mathrm{min}}(C) .
    $$
\end{proposition}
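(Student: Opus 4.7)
The statement splits naturally into monotonicity of $\eta$ and the submodularity-ratio bound, and I intend to treat them in that order.

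For \emph{monotonicity}, I observe that $S\subseteq T$ implies $\mcal{V}_{\boldsymbol{x}_S}\subseteq\mcal{V}_{\boldsymbol{x}_T}$, so the Pythagorean identity yields $\norm{P_{\mcal{V}_{\boldsymbol{x}_S}} b_j}_{\mcal{V}}^2 \le \norm{P_{\mcal{V}_{\boldsymbol{x}_T}} b_j}_{\mcal{V}}^2$ for every $j=1,\ldots,d$. Summing over $j$ gives $\eta(S)\le \eta(T)$.

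For the \emph{submodularity ratio}, I plan to recast $\eta$ as a sum of R-squared statistics of a normalised dictionary, which brings the problem into the framework of~\cite{Das2018}. Concretely, define the unit vectors $\tilde k_i := k(x_i,\bullet)/\sqrt{K(x_i)}$ in $\mcal{V}$. By construction $(\tilde k_i,\tilde k_j)_{\mcal{V}} = C_{i,j}$, so $\braces{\tilde k_i}_{i\in\mcal{D}}$ is a normalised dictionary whose Gramian is exactly $C$. Since the normalisation is a positive rescaling, $\mcal{V}_{\boldsymbol{x}_S} = \operatorname{span}\braces{\tilde k_i : i\in S}$, and each summand $\norm{P_{\mcal{V}_{\boldsymbol{x}_S}} b_j}_{\mcal{V}}^2$ is the squared multiple-correlation coefficient of regressing the unit-norm target $b_j$ on the sub-dictionary indexed by $S$. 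This is precisely the setting of $\mathrm{SDS}_{\mathrm{OMP}}$ in~\cite[Section~4.2]{Das2018}, and the spectral submodularity-ratio bound stated there implies
$$
    \gamma_{U,k}\pars{S \mapsto \norm{P_{\mcal{V}_{\boldsymbol{x}_S}} b_j}_{\mcal{V}}^2} \ge \lambda_{\mathrm{min}}(C, \abs{U}+k)
$$
for every $j$. It remains to lift this to the sum $\eta = \sum_j \norm{P_{\mcal{V}_{\boldsymbol{x}_\bullet}} b_j}_{\mcal{V}}^2$: if $\gamma$ is a common lower bound for the submodularity ratios of the summands, then for any admissible $L$ and $S$
$$
    \sum_{s\in S}\bracs{f_j(L\cup\braces{s}) - f_j(L)} \ge \gamma\bracs{f_j(L\cup S) - f_j(L)},
$$
and summing in $j$ preserves the inequality, so $\gamma_{U,k}(\eta)\ge \lambda_{\mathrm{min}}(C,\abs{U}+k)$. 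The second inequality $\lambda_{\mathrm{min}}(C,\abs{U}+k)\ge \lambda_{\mathrm{min}}(C)$ is a direct consequence of Cauchy's interlacing theorem applied to principal submatrices of $C$.

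The main obstacle is the faithful transcription of the R-squared submodularity-ratio bound from~\cite{Das2018} into the present RKHS setting: one must verify that normalising the kernel sections $k(x_i,\bullet)$ leaves both the projection $P_{\mcal{V}_{\boldsymbol{x}_S}}$ and the relevant Gram matrix $C$ consistent with the notions used there, and that the orthonormality of the targets $b_j$ matches the $\norm{b_j}_{\mcal{V}} = 1$ hypothesis implicit in the R-squared interpretation. Once this identification is in place, the remainder is essentially bookkeeping.
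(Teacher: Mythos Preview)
Your proposal is correct and follows the same strategy as the paper: normalise the kernel sections to obtain a unit-norm dictionary with Gramian $C$, then bound the submodularity ratio of the projection-norm objective by the minimal sparse eigenvalue of $C$ via the Das--Kempe spectral argument. The only cosmetic difference is packaging: you invoke the single-target $R^2$ bound from~\cite{Das2018} for each $b_j$ and then sum (using that a common lower bound on the summand ratios transfers to the sum), whereas the paper unrolls the computation in a self-contained way by stacking all $d$ targets into block-diagonal matrices, reducing to $\lambda_{\mathrm{min}}(D^{-1}C^L_{S,S}D^{-1})$, and then appealing to $D_{i,i}\le 1$ and the Schur-complement eigenvalue inequality $\lambda_{\mathrm{min}}(C_{S\cup L,S\cup L}/C_{L,L})\ge \lambda_{\mathrm{min}}(C_{S\cup L,S\cup L})$---which is precisely the machinery hidden inside the cited result.
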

\begin{proof}
    Let $\boldsymbol{x} \in \mcal{X}^n$ be given.
    To facilitate this proof, we introduce the notation $\boldsymbol{x}_L$ for the sub-vector of $\boldsymbol{x}$ with indices in $L \subseteq \braces{1,\ldots,n}$ and extend this notion to matrices as well.
    Since $\mcal{V}_{\boldsymbol{x}_L} \subseteq \mcal{V}_{\boldsymbol{x}_S}$ for all $L \subseteq S  \subseteq\braces{1,\ldots,n}$, it follows that $\norm{P_{\mcal{V}_{\boldsymbol{x}_L}} b_j}_{\mcal{V}} \le \norm{P_{\mcal{V}_{\boldsymbol{x}_S}} b_j}_{\mcal{V}}$ for every $j=1,\ldots, d$.
    This proves $\eta(\boldsymbol{x}_L) \le \eta(\boldsymbol{x}_S)$ and thereby monotonicity.

    To compute the submodularity ratio, we define the dictionary of 
    normalised functions $\omega : \mcal{X}\to\mbb{R}^n$ by $\omega_i := K(x_i)^{-1/2}k(x_i, \bullet)$
    and introduce for any $L,S\subseteq\braces{1,\ldots,n}$ the notation $P_{\inner{\omega_L}}$ for the $\mcal{V}$-orthogonal projection onto $ \inner{\omega_L}:= \operatorname{span}\{\omega_l : l\in L\}$ and write $P_{\inner{\omega_L}} \omega_S$ for the component-wise application of the projection operator to $\omega_S$.
    With this, we can succinctly write
    $$
        P_{\inner{\omega_{L\cup S}}} = P_{\inner{\omega_L}} + P_{\inner{(I-P_{\inner{\omega_L}})\omega_S}} .
    $$
    It thus holds that
    \begin{align}
        \gamma_{U,k}(\eta)
        &:= \min_{L\subseteq U, \abs{S}\le k, L\cap S=\emptyset} \frac{\sum_{s\in S} (\eta(\boldsymbol{x}_{L\cup\braces{s}}) - \eta(\boldsymbol{x}_L)}{\eta(\boldsymbol{x}_{L\cup S}) - \eta(\boldsymbol{x}_L)}) \\
        &= \min_{L\subseteq U, \abs{S}\le k, L\cap S=\emptyset} \frac{
            \sum_{s\in S} ( \sum_{j=1}^d
                \norm{P_{\inner{\omega_{L\cup\braces{s}}}} b_j}_{\mcal{V}}^2
                - \norm{P_{\inner{\omega_{L}}} b_j}_{\mcal{V}}^2)
        }{
            \sum_{j=1}^d 
                \norm{P_{\inner{\omega_{L\cup S}}} b_j}_{\mcal{V}}^2
                - \norm{P_{\inner{\omega_L}} b_j}_{\mcal{V}}^2
        } \\
        &= \min_{L\subseteq U, \abs{S}\le k, L\cap S=\emptyset} \frac{
            \sum_{s\in S} \sum_{j=1}^d
                \norm{P_{\inner{(I - P_{\inner{\omega_L}}) \omega_{\braces{s}}}} b_j}_{\mcal{V}}^2
        }{
            \sum_{j=1}^d 
                \norm{P_{\inner{(I - P_{\inner{\omega_L}}) \omega_S}} b_j}_{\mcal{V}}^2
        } .
    \end{align}
    We now define a basis $\omega_s^L$ for the spaces $\inner{(I - P_{\inner{\omega_L}})\omega_{\braces{s}}}$ occurring in the numerator of the preceding expression.
    Using the Gramian matrix $C_{i,j} := (\omega_i, \omega_j)_{\mcal{V}}$, these basis elements and their corresponding Gramian can be expressed as
    $$
        \omega^L_i
        := (I - P_{\inner{\omega_L}}) \omega_i
        = \omega_i - C_{i,L} C_{L,L}^{-1} \omega_L
        \qquad\text{and}\qquad
        C^L_{i,j}
        := (\omega^L_i, \omega^L_j)_{\mcal{V}}
        = C_{i,j} - C_{i,L} C_{L,L}^{-1} C_{L,j}
        .
    $$
    Similarly, we can express the inner products between $\omega^L$ and $b$ through the cross-Gramian matrix $B^L_{i,j} := (\omega^L_i, b_j)_{\mcal{V}}$ and define the diagonal scaling matrix $D := \operatorname{diag}(\norm{\omega^L_1}_{\mcal{V}}, \ldots, \norm{\omega^L_n}_{\mcal{V}})$.
    This allows us to write the submodularity ratio as
    \begin{align}
        \gamma_{U,k}(\eta)
        &= \min_{L\subseteq U, \abs{S}\le k, L\cap S=\emptyset} \frac{
            \sum_{j=1}^d \sum_{s\in S}
                \norm{P_{\inner{(I - P_{\inner{\omega_L}}) \omega_{\braces{s}}}} b_j}_{\mcal{V}}^2
        }{
            \sum_{j=1}^d 
                \norm{P_{\inner{(I - P_{\inner{\omega_L}}) \omega_S}} b_j}_{\mcal{V}}^2
        } 
        = \min_{L\subseteq U, \abs{S}\le k, L\cap S=\emptyset} \frac{
            \sum_{j=1}^d
                (B^L_{S,j})^\intercal D^{-2} (B^L_{S,j})
        }{
            \sum_{j=1}^d 
                (B^L_{S,j})^{\intercal} (C^L_{S,S})^{-1} (B^L_{S,j})
        } .
    \end{align}
    Stacking $d$ copies of $C^L_{S,S}$ and $D$ in the block-diagonal matrices $\bar{C} := \operatorname{diag}(C^L_{S,S}, \ldots, C^L_{S,S})$ and $\bar{D} := \operatorname{diag}(D, \ldots, D)$ and concatenating all $B^L_{S,j}$ into the vector $b_{i + \abs{S}(j-1)} := (B^L_{S,j})_i$
    the fraction can be estimated as
    $$
        \frac{
            \sum_{j=1}^d (B^L_{S,j})^\intercal D^{-2} (B^L_{S,j})
        }{
            \sum_{j=1}^d  (B^L_{S,j})^{\intercal} (C^L_{S,S})^{-1} (B^L_{S,j})
        }
        = \frac{b^\intercal \bar{D}^{-2} b}{b^\intercal \bar{C}^{-1} b}
        \ge \min_{b\in\mbb{R}^{d\abs{S}}} \frac{b^\intercal b}{b^\intercal \bar{D}\bar{C}^{-1}\bar{D} b}
        = \lambda_{\mathrm{min}}(\bar{D}^{-1}\bar{C}\bar{D}^{-1})
        = \lambda_{\mathrm{min}}(D^{-1}C^L_{S,S}D^{-1}) .
    $$
    To estimate this eigenvalue, we proceed in two steps.
    First, we note that
    $$
        D_{i,i}
        = \norm{\omega^L_i}_{\mcal{V}}
        = \norm{(I - P_{\inner{\omega^L}})\omega_i}_{\mcal{V}}
        \le \norm{\omega_i}_{\mcal{V}}
        = 1 ,
    $$
    which implies
    $$
        \lambda_{\mathrm{min}}(D^{-1}C^L_{S,S}D^{-1})
        \ge \lambda_{\mathrm{max}}(D)^{-2} \lambda_{\mathrm{min}}(C^L_{S,S})
        \ge \lambda_{\mathrm{min}}(C^L_{S,S})
        .
    $$
    Next, we note that $C^L_{S,S}$ is precisely the Schur complement $C_{S\cup L, S\cup L}/C_{L,L}$, which implies
    $$
        \lambda_{\mathrm{min}}(C^L_{S,S}) \ge \lambda_{\mathrm{min}}(C_{S\cup L, S\cup L}) .
    $$
    A proof of this fact can be found in Appendix~\ref{app:schur}.
    The submodularity ratio is, therefore, bounded by
    \begin{align}
        \gamma_{U,k}(\eta)
        &\ge \min_{L\subseteq U, \abs{S}\le k, L\cap S=\emptyset} \lambda_{\mathrm{min}}(C_{S\cup L, S\cup L})
        \ge \lambda_{\mathrm{min}}(C, \abs{U}+k) \ge \lambda_{\mathrm{min}}(C) . \qedhere
    \end{align}
\end{proof}

\begin{remark}
    The idea of using submodular optimisation to select subsamples is not new and was studied for the maximisation of the determinant of a kernel matrix in~\cite{NEURIPS2018_dbbf603f}.
\end{remark}

\begin{remark}
    For the set $\mcal{D} = \mcal{X}$, this algorithm was already proposed in~\cite{Binev2018} under the name collective OMP.
    A problem with the approach in~\cite{Binev2018} is that the proofs require dense dictionaries $\mcal{D}$ for which $\mcal{V}_d \subseteq \overline{\mcal{D}}$.
    Hence, we cannot use their algorithms in the setting of section~\ref{sec:subsampling}.
    Their algorithm was originally designed to tackle the original sampling problem in section~\ref{sec:sampling}.
    However, its complexity is unknown, and it could very well be NP-hard to find the minimum.
\end{remark}

\begin{remark}
    It is illustrated in~\cite{Binev2018} that the incremental selection of sample points (when the dimension $d$ of $\mcal{V}_d$ increases) is of roughly the same quality as when the sample points are drawn anew for each $d$.
     This observation is not surprising since, for a given set of basis functions $\mcal{B} := \braces{b_1,\ldots, b_d}$, the corresponding function
    $$
       \eta_{\mcal{B}}(\boldsymbol{x}) := \sum_{b\in\mcal{B}} \norm{P_{\mcal{V}_{\boldsymbol{x}}} b}_{\mcal{V}}^2
    $$
    is modular in $\mcal{B}$, i.e.\ $\eta_{\mcal{B}\cup\braces{v}} = \eta_{\mcal{B}} + \eta_{\braces{v}}$ for all $\mcal{B}$ and $v\not\in\mcal{B}$. 
    When the function $\eta_{\braces{v}}$ is not significantly larger than $\eta_{\mcal{B}}$ (which seems reasonable), then a subset $S\subseteq\mcal{D}$ that is selected to maximise $\eta_{\mcal{B}}$ will also produce a large value for the function $\eta_{\mcal{B}\cup\braces{v}} = \eta_{\mcal{B}} + \eta_{\braces{v}}$.
    This encourages the idea that old sample points that have already been used for the approximation in $\mcal{V}_d$ can be recycled for the approximation in $\mcal{V}_{d+1}$ without drawbacks.
\end{remark}

We conclude this section by listing the proposed sampling strategy with greedy subsampling in pseudo-code.
\begin{algorithm}
\caption{Sampling algorithm with greedy subsampling}\label{alg:greedy}
    \KwData{$\mu_\star > 1$, $\beta\in[1, \infty)$ and sequence of probability measures $\braces{\rho_k}_{k\in\mbb{N}}$}
    \KwResult{$\boldsymbol{x}\in\mcal{X}^n$ satisfying $\mu(\boldsymbol{x})\le\mu_\star$}
    Use Algorithm~\ref{alg:det} to draw a sample set $\boldsymbol{x}\in\mcal{X}^n$ satisfying $\mu(\boldsymbol{x}) \le 1 + \frac{\mu_\star - 1}{\beta}$\\
    Set $\boldsymbol{y} := \emptyset$\\
    \While{$\mu(\boldsymbol{y}) > \mu_\star$}{
        Set $y := \argmax\braces{\eta(\boldsymbol{y}\oplus \{y\}) \,:\, y\in\boldsymbol{x}}$\\
        Update $\boldsymbol{y} := \boldsymbol{y} \oplus  \{y\}$\\
    }
    Set $\boldsymbol{x} :=  \boldsymbol{y}$
\end{algorithm}

\section{Perturbed evaluations}
\label{sec:noisy}

Now assume that the observations are perturbed by deterministic noise,
i.e.\ we only have access to $y_i := u(x_i) + \eta(x_i)$ where $\eta$ is a function in some normed vector space $\mcal{R}\supseteq \mcal{V}$.
We let $k_\mcal{R}$ be a positive semi-definite kernel, which defines for any $\boldsymbol{x}\in \mcal{X}^n$ a semi-norm 
$$
    \Vert v \Vert_{\boldsymbol{x},\mcal{R}}^2
    :=  \Vert v(\boldsymbol{x}) \Vert_{K_{\mcal{R}}(\boldsymbol{x})^+}^2
    := v(\boldsymbol{x})^\intercal K_{\mcal{R}}^+(\boldsymbol{x}) v(\boldsymbol{x}),
$$
with $K_{\mcal{R}}(\boldsymbol{x})$ being the positive semi-definite kernel matrix associated with $k_{\mcal{R}}$.
Furthermore, we assume that 
\begin{equation}
    \Vert v \Vert_{\boldsymbol{x},\mcal{R}}  \le c_n \Vert v \Vert_{\mcal{R}}, \label{eq:semi-norm-R}
\end{equation}
with a constant $c_n$ that may depend on $n$.

\begin{example}
    Consider the case where $\mcal{R}$ is a RKHS with kernel $k_{\mcal{R}}$.
    Then the semi-norm $\Vert v \Vert_{\boldsymbol{x},\mcal{R}} = \Vert P_{\mcal{R}_{\boldsymbol{x}}} v \Vert_{\mcal{R}}$, where $P_{\mcal{R}_{\boldsymbol{x}}} $ is the $\mcal{R}$-orthogonal projection onto $\mcal{R}_{\boldsymbol{x}} = \operatorname{span}(k_{\mcal{R}}(\boldsymbol{x},\bullet))$. This yields property \eqref{eq:semi-norm-R} with constant $c_n = 1$.  
\end{example}

\begin{minipage}{\textwidth}
\begin{example}
    Consider the case where $\mcal{R}$ is the weighted Lebesgue space $L^\infty_{\gamma^{-1/2}}$ for some weight function $\gamma : \mcal{X} \to (0,\infty)$, equipped with the norm $\Vert v \Vert_{L^\infty_{\gamma^{-1/2}}} := \esssup_{x\in \mcal{X}} \gamma(x)^{-1/2} \vert v(x) \vert$, and where $k_{\mcal{R}}(x,y) = \gamma(x) \mathbf{1}_{x=y}$ is the weighted white noise kernel.
    Then $K_{\mcal{R}}(\boldsymbol{x}) = \operatorname{diag}(\gamma(\boldsymbol{x}))$ and 
    $\Vert v \Vert_{\boldsymbol{x},\mcal{R}}^2 =  \sum_{i=1}^n v(x_i)^2 \gamma(x_i)^{-1} \le n \Vert v\Vert_{\mcal{R}}^2.$ Thus \eqref{eq:semi-norm-R} holds with constant $c_n = \sqrt{n}.$
\end{example}
\end{minipage}

For $\boldsymbol{x}\in \mcal{X}^n$, we define the regularised matrix 
$$
    K_{\mcal{S}}(\boldsymbol{x}) := K(\boldsymbol{x}) + c_n K_{\mcal{R}}(\boldsymbol{x})
$$
and define the corresponding norm $\norm{v}_{\boldsymbol{x},\mcal{S}} := \norm{v(\boldsymbol{x})}_{K_{\mcal{S}}(\boldsymbol{x})^+}$.
 
The regularised projection $u^{\boldsymbol{x},\mcal{S}} \in \mcal{V}_d$ is defined by
$$
    u^{\boldsymbol{x},\mcal{S}}
    := \argmin_{v\in\mcal{V}_d} \; \norm{\boldsymbol{y} - v(\boldsymbol{x})}_{K_{\mcal{S}}(\boldsymbol{x})^+}
    = \argmin_{v\in\mcal{V}_d} \; \norm{(u + \eta) - v}_{\boldsymbol{x},\mcal{S}}
    =: P_{\mcal{V}_d}^{\boldsymbol{x},\mcal{S}} (u + \eta) .
$$

\begin{proposition}
\label{prop:error_bound_noisy}
    Let $b$ be a $\mcal{V}$-orthonormal basis of $\mcal{V}_d$ and define $\mu_{\mcal{S}}(\boldsymbol{x}) := \lambda_{\mathrm{min}}(b(\boldsymbol{x}) K_{\mcal{S}}(\boldsymbol{x})^+ b(\boldsymbol{x})^\intercal)^{-1/2}$.
    Then
    $$
        \norm{u - u^{\boldsymbol{x},\mcal{S}}}_{\mcal{V}}
        \le (1 + \mu_{\mcal{S}}(\boldsymbol{x})) \norm{u - P_{\mcal{V}_d}u}_{\mcal{V}}
        + \mu_{\mcal{S}}(\boldsymbol{x}) \norm{\eta}_{\mcal{R}}.
    $$
\end{proposition}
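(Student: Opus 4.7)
My plan is to mirror the structure of the proof of Theorem~\ref{thm:error_bound_noiseless}, handling the noise term via a triangle inequality that is enabled by the regularised semi-norm $\|\bullet\|_{\boldsymbol{x},\mcal{S}}$.

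First, I would establish the variational identity
$$
    \mu_{\mcal{S}}(\boldsymbol{x}) = \max_{v\in\mcal{V}_d\setminus\{0\}} \frac{\|v\|_{\mcal{V}}}{\|v\|_{\boldsymbol{x},\mcal{S}}},
$$
exactly as in the proof of Theorem~\ref{thm:error_bound_noiseless}, simply replacing $K(\boldsymbol{x})^+$ by $K_{\mcal{S}}(\boldsymbol{x})^+$. Then I would set $w := P_{\mcal{V}_d} u$ and split the error
$$
    \|u - u^{\boldsymbol{x},\mcal{S}}\|_{\mcal{V}}
    \le \|u - w\|_{\mcal{V}} + \|w - u^{\boldsymbol{x},\mcal{S}}\|_{\mcal{V}}.
$$
Since $w - u^{\boldsymbol{x},\mcal{S}} \in \mcal{V}_d$, the variational identity gives $\|w - u^{\boldsymbol{x},\mcal{S}}\|_{\mcal{V}} \le \mu_{\mcal{S}}(\boldsymbol{x}) \|w - u^{\boldsymbol{x},\mcal{S}}\|_{\boldsymbol{x},\mcal{S}}$. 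Using that $P_{\mcal{V}_d}^{\boldsymbol{x},\mcal{S}}$ fixes $w$ and is a contraction with respect to $\|\bullet\|_{\boldsymbol{x},\mcal{S}}$, I would rewrite
$$
    w - u^{\boldsymbol{x},\mcal{S}}
    = P_{\mcal{V}_d}^{\boldsymbol{x},\mcal{S}}\bigl(w - (u + \eta)\bigr)
$$
and bound
$$
    \|w - u^{\boldsymbol{x},\mcal{S}}\|_{\boldsymbol{x},\mcal{S}}
    \le \|u - w\|_{\boldsymbol{x},\mcal{S}} + \|\eta\|_{\boldsymbol{x},\mcal{S}}.
$$

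The heart of the argument is then the two auxiliary bounds $\|u - w\|_{\boldsymbol{x},\mcal{S}} \le \|u - w\|_{\mcal{V}}$ and $\|\eta\|_{\boldsymbol{x},\mcal{S}} \le \|\eta\|_{\mcal{R}}$. Both rely on the PSD decomposition $K_{\mcal{S}}(\boldsymbol{x}) = K(\boldsymbol{x}) + c_n K_{\mcal{R}}(\boldsymbol{x})$ together with the variational formula $z^\intercal M^+ z = \max_{\alpha}\{2 z^\intercal \alpha - \alpha^\intercal M \alpha\}$, valid for any $z$ in the range of a PSD matrix $M$. For the first bound, applied to $v = u - w \in \mcal{V}$, dropping the non-negative term $c_n \alpha^\intercal K_{\mcal{R}}(\boldsymbol{x})\alpha$ yields $\|v\|_{\boldsymbol{x},\mcal{S}} \le \|v\|_{\boldsymbol{x}} = \|P_{\mcal{V}_{\boldsymbol{x}}} v\|_{\mcal{V}} \le \|v\|_{\mcal{V}}$. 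For the second bound, applied to $\eta \in \mcal{R}$, dropping the non-negative term $\alpha^\intercal K(\boldsymbol{x}) \alpha$ and computing the resulting quadratic maximum gives $\|\eta\|_{\boldsymbol{x},\mcal{S}}^2 \le \tfrac{1}{c_n}\|\eta\|_{\boldsymbol{x},\mcal{R}}^2$, and combining with hypothesis~\eqref{eq:semi-norm-R} controls this by $\|\eta\|_{\mcal{R}}^2$.

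Assembling the pieces produces
$$
    \|u - u^{\boldsymbol{x},\mcal{S}}\|_{\mcal{V}}
    \le \|u-w\|_{\mcal{V}} + \mu_{\mcal{S}}(\boldsymbol{x})\bigl(\|u - w\|_{\mcal{V}} + \|\eta\|_{\mcal{R}}\bigr)
    = (1+\mu_{\mcal{S}}(\boldsymbol{x}))\|u - P_{\mcal{V}_d} u\|_{\mcal{V}} + \mu_{\mcal{S}}(\boldsymbol{x})\|\eta\|_{\mcal{R}},
$$
as claimed. The main obstacle is the calibration of the noise bound $\|\eta\|_{\boldsymbol{x},\mcal{S}} \le \|\eta\|_{\mcal{R}}$: it is exactly this step that motivates the weighting factor $c_n$ placed in front of $K_{\mcal{R}}(\boldsymbol{x})$ in the definition of $K_{\mcal{S}}(\boldsymbol{x})$, since without it the noise term would pick up a dependence on $n$ through the constant in~\eqref{eq:semi-norm-R}. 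Everything else is a direct RKHS-level adaptation of Theorem~\ref{thm:error_bound_noiseless}, except that one no longer needs the finer bound on the operator $A$ because the triangle inequality with the full best-approximation residual (rather than just its $\mcal{V}_d$-component) already suffices to close the argument.
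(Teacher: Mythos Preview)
Your argument follows exactly the same route as the paper's: split off the best-approximation error, pass from $\|\cdot\|_{\mcal{V}}$ to $\|\cdot\|_{\boldsymbol{x},\mcal{S}}$ on $\mcal{V}_d$ via the variational identity for $\mu_{\mcal{S}}(\boldsymbol{x})$, use that $P_{\mcal{V}_d}^{\boldsymbol{x},\mcal{S}}$ is a $\|\cdot\|_{\boldsymbol{x},\mcal{S}}$-contraction fixing $P_{\mcal{V}_d}u$, and then bound the two residual pieces by exploiting $K_{\mcal{S}}(\boldsymbol{x})\succeq K(\boldsymbol{x})$ and $K_{\mcal{S}}(\boldsymbol{x})\succeq c_n K_{\mcal{R}}(\boldsymbol{x})$. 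The only cosmetic difference is that you justify the monotonicity step through the dual formula $z^\intercal M^+ z=\max_\alpha\{2z^\intercal\alpha-\alpha^\intercal M\alpha\}$, whereas the paper simply asserts it from the PSD ordering.

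There is, however, a scaling slip in your noise bound --- and the paper's own proof makes the same slip. From $K_{\mcal{S}}(\boldsymbol{x})\succeq c_n K_{\mcal{R}}(\boldsymbol{x})$ one obtains $\|\eta\|_{\boldsymbol{x},\mcal{S}}^2\le c_n^{-1}\|\eta\|_{\boldsymbol{x},\mcal{R}}^2$, as you correctly write; but combining this with hypothesis~\eqref{eq:semi-norm-R}, which squares to $\|\eta\|_{\boldsymbol{x},\mcal{R}}^2\le c_n^{2}\|\eta\|_{\mcal{R}}^2$, only yields $\|\eta\|_{\boldsymbol{x},\mcal{S}}^2\le c_n\,\|\eta\|_{\mcal{R}}^2$, not $\|\eta\|_{\mcal{R}}^2$. (The paper instead writes $\|\eta\|_{\boldsymbol{x},\mcal{S}}\le c_n^{-1}\|\eta\|_{\boldsymbol{x},\mcal{R}}$ without the squares, which is off by the same factor.) The clean conclusion $\|\eta\|_{\boldsymbol{x},\mcal{S}}\le\|\eta\|_{\mcal{R}}$ would follow if the regulariser were defined with weight $c_n^2$, i.e.\ $K_{\mcal{S}}(\boldsymbol{x}):=K(\boldsymbol{x})+c_n^{2}K_{\mcal{R}}(\boldsymbol{x})$; with the weight $c_n$ as stated, the bound on the noise term picks up an extra factor $\sqrt{c_n}$.
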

\begin{proof}
    As in the proof of Theorem~\ref{thm:error_bound_noiseless}, we can estimate
    $$
        \norm{u - u^{\boldsymbol{x},\mcal{S}}}_{\mcal{V}}
        \le \norm{u - P_{\mcal{V}_d}u}_{\mcal{V}} + \norm{P_{\mcal{V}_d}u - u^{\boldsymbol{x},\mcal{S}}}_{\mcal{V}}
        \le \norm{u - P_{\mcal{V}_d}u}_{\mcal{V}} + \mu_{\mcal{S}}(\boldsymbol{x}) \norm{P_{\mcal{V}_d} u - u^{\boldsymbol{x},\mcal{S}}}_{\boldsymbol{x},\mcal{S}} .
    $$
    Moreover, it holds that
    \begin{align}
        \norm{P_{\mcal{V}_d} u - u^{\boldsymbol{x},\mcal{S}}}_{\boldsymbol{x},\mcal{S}}
        &= \norm{P_{\mcal{V}_d} u - P_{\mcal{V}_d}^{\boldsymbol{x},\mcal{S}} (u + \eta)}_{\boldsymbol{x},\mcal{S}} \\
        &\le \norm{P_{\mcal{V}_d}^{\boldsymbol{x},\mcal{S}} (u - P_{\mcal{V}_d} u)}_{\boldsymbol{x},\mcal{S}} + \norm{P_{\mcal{V}_d}^{\boldsymbol{x},\mcal{S}} \eta}_{\boldsymbol{x},\mcal{S}} \\
        &\le \norm{u - P_{\mcal{V}_d} u}_{\boldsymbol{x},\mcal{S}} + \norm{\eta}_{\boldsymbol{x},\mcal{S}}
        .
    \end{align}
    Since $K_{\mcal{S}}(\boldsymbol{x}) \succeq K(\boldsymbol{x})$ and $K_{\mcal{S}}(\boldsymbol{x}) \succeq c_n K_{\mcal{R}}(\boldsymbol{x})$, we can estimate
    \begin{align}
        \norm{u - P_{\mcal{V}_d} u}_{\boldsymbol{x},\mcal{S}}
        = \norm{(u - P_{\mcal{V}_d} u)(\boldsymbol{x})}_{K_{\mcal{S}}(\boldsymbol{x})^+}
        \le \norm{(u - P_{\mcal{V}_d} u)(\boldsymbol{x})}_{K(\boldsymbol{x})^+}
        = \norm{u - P_{\mcal{V}_d} u}_{\boldsymbol{x}} \\
    \intertext{and}
        \norm{\eta}_{\boldsymbol{x},\mcal{S}}
        = \norm{\eta(\boldsymbol{x})}_{K_{\mcal{S}}(\boldsymbol{x})^+}
        \le c_n^{-1} \norm{\eta(\boldsymbol{x})}_{K_{\mcal{R}}(\boldsymbol{x})^+}
        = c_n^{-1} \norm{\eta}_{\boldsymbol{x}, \mcal{R}} .
    \end{align}
    Using the bounds $\norm{u - P_{\mcal{V}_d} u}_{\boldsymbol{x}} \le \norm{u - P_{\mcal{V}_d}u}_{\mcal{V}}$ and $\norm{\eta}_{\boldsymbol{x},\mcal{R}} \le c_n\norm{\eta}_{\mcal{R}}$ concludes the proof.
\end{proof}

\section{Experiments}
\label{sec:experiments}

{\color{red}
}

Let $\nu$ be a probability measure on $\mathcal{X}$, specified differently for each experiment and define
$$
    \mathfrak{K}(x)
    := \sup_{v\in\mathcal{V}_d} \frac{v(x)^2}{\|v\|_{L^2(\nu)}^2}
    \,.
$$
We propose to generate $\boldsymbol{x}\in\mcal{X}^n$ from the distribution~\eqref{eq:our-distribution} using Algorithm~\ref{alg:det} with 
$\rho_1=\ldots=\rho_d \propto \mathfrak{K} \frac{K}{K_d}\nu$
and $\rho_n \propto \mathfrak{K}\nu$ for all $n>d$.
These measures are indeed finite if $\mcal{V}$ is compactly embedded in $L^2(\nu)$ and
\begin{equation}
\label{eq:finite_trace}
    \int K(x) \dx[\nu(x)] < \infty .\footnote{
To see this, let $b_1,\ldots, b_d$ be an $L^2(\nu)$-orthogonal and $\mcal{V}$-orthonormal basis of $\mcal{V}_d$.
Then
$$
    \mathfrak{K}
    \le \pars{\max_j \norm{b_j}_{L^2(\nu)}^{-2}} K_d
    \qquad\text{and}\qquad
    K_d \le K,
$$
which implies $\mathfrak{K}\nu \lesssim K_d\nu \le K\nu$ and $\frac{\mathfrak{K}}{K_d}K\nu \lesssim K\nu$.}
\end{equation}
Propositions~\ref{prop:marginal} and~\ref{prop:conditional_marginal} guarantee that the generated samples are (approximately) distributed according to $\det(G^{\boldsymbol{x}}) \dx[(\rho_1 \otimes \cdots \otimes\rho_n)](\boldsymbol{x})$, if the parametric integrals $Z_i$ from Proposition~\ref{prop:marginal} remain bounded.
Figure~\ref{fig:Z_function_distribution} shows that this is the case for the choice $\rho_1=\ldots=\rho_d \propto \mathfrak{K}\frac{K}{K_d}\nu$.
As a consequence of Proposition~\ref{prop:conditional_marginal} and Remark~\ref{rmk:marginal}, the chosen reference measures $\rho_i$ ensure that the marginals of $\det(G^{\boldsymbol{x}}) \dx[(\rho_1 \otimes \cdots \otimes\rho_n)](\boldsymbol{x})$ approximate $\mathfrak{K}\nu$.

For the purpose of this discussion, we denote the proposed method as \emph{subspace-informed volume sampling} (SIVS), since the density is proportional to the volume of the Gramian matrix $G^{\boldsymbol{x}}$~\eqref{eq:def:Gramian_x}, which depends on the subspace $\mcal{V}_d$ as well as the ambient RKHS $\mcal{V}$.
We compare this method against two other sampling methods.
\begin{enumerate}

    
    \item Christoffel sampling ~\cite{cohen_2017_optimal} draws $\boldsymbol{x}\in \mcal{X}^n$ as $n$ i.i.d.\ samples from $\mathfrak{K}\nu$.
    This is motivated by Theorem~\ref{thm:iid_bounds}, and this method is indeed optimal for $L^2$-approximation with point evaluations (cf.~\cite{trunschke2024optimalsamplingsquaresapproximation}).
    
    \item Continuous volume sampling~\cite{belhadji2020kernel} draws $\boldsymbol{x} \in \mcal{X}^n$ from~\eqref{eq:cvs}.
    It's major distinction from the proposed SIVS method is that continuous volume sampling only takes into account the ambient space $\mcal{V}$, while SIVS depends on the subspace $\mcal{V}_d$ as well.
    Sample size bounds are provided in Corollary~\ref{cor:ayoub}.
\end{enumerate}

After comparing these four sampling methods, we illustrate the convergence of the proposed greedy subsampling method (Algorithm~\ref{alg:greedy}).

\begin{figure}
    \centering
    \includegraphics[width=\textwidth]{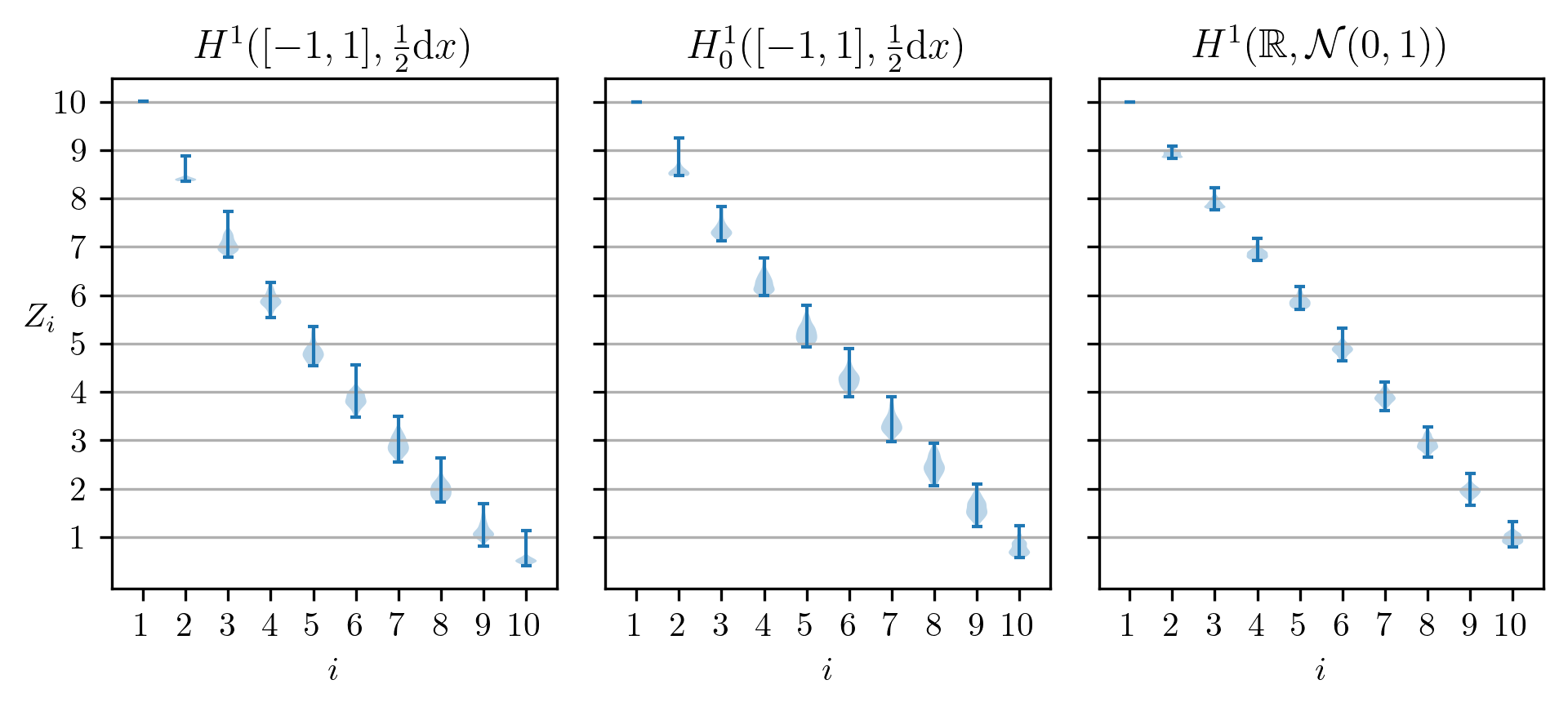}
    \caption{Distribution and bounding intervals of $Z_i(\boldsymbol{x}_{<i})$ for $\boldsymbol{x}\in\mcal{X}^d$ drawn according to $x_i\sim q(\bullet \mid \boldsymbol{x}_{<i}) \rho_i$, with $d=10$ and $\mathcal{V}$ and $\mathcal{V}_d$ as defined in sections~\ref{sec:experiments:h1},~\ref{sec:experiments:h10} and~\ref{sec:experiments:gaussian}.}
    \label{fig:Z_function_distribution}
\end{figure}

\begin{remark}
    An alternative \emph{volume-rescaled sampling distribution} has been proposed in~\cite{Derezinski2022Jan} for least squares approximation in a subspace $\mcal{V}_d$ of $L^2(\nu)$.
    It is equivalent to sampling $d$ points from a projection determinantal point process associated with $\mcal{V}_d$ and the reference measure $\nu$, and $n-d$ additional i.i.d.\ points from the $L^2$-Christoffel sampling distribution $\mfrak{K} \nu$.
    We do not consider this method in our experiments since it has similar properties as i.i.d.\ $L^2$-Christoffel sampling.
\end{remark}

We compare these methods for three prototypical cases.
\begin{itemize}
   \item \textbf{Section~\ref{sec:experiments:h1}:} For $\mcal{V} = H^1([-1,1], \tfrac12\dx)$ the kernel $1 \lesssim K(x) \lesssim 1$ is uniformly bounded from above and below.
   \item \textbf{Section~\ref{sec:experiments:h10}:} For $\mcal{V} = H^1_0([-1,1], \tfrac12\dx)$ the kernel $K(x) \lesssim 1$ is uniformly bounded only from above.
   \item \textbf{Section~\ref{sec:experiments:gaussian}:} For $\mcal{V} = H^1(\mbb{R}, \mcal{N}(0,1))$ the kernel $1 \lesssim K(x)$ is uniformly bounded only from below.\footnote{Note that an unbounded domain is necessary in this case since $K(x)$ has to be finite for every $x\in\mcal{X}$ by the definition of an RKHS.
   This implies that $k(x,x)$ must be bounded from above as soon as it is continuous and $\mcal{X}$ is compact.}
\end{itemize}

The kernels for $H^1([-1,1], \tfrac12\dx)$ and $H^1_0([-1,1], \tfrac12\dx)$ are standard and can be found for example in~\cite{Binev2018,Paulsen2016,Tutaj2019}.
The kernel for $H^1(\mbb{R}, \mcal{N}(0,1))$ can be found in the same manner as those of $H^1([-1,1], \tfrac12\dx)$ and $H^1_0([-1,1], \tfrac12\dx)$, and a derivation is provided, for the sake of completeness, in Appendix~\ref{app:gaussian_rkhs}.
 For the convenience of the reader, we restate the reproducing kernels for the corresponding spaces in the beginning of each section.


\subsection{A polynomial subspace of \texorpdfstring{$\boldsymbol{H^1([-1,1], \tfrac12\dx)}$}{H1 with uniform measure}}
\label{sec:experiments:h1}

Consider the Hilbert space $\mathcal{V} = H^1([-1, 1], \nu)$ with $\nu := \tfrac12\dx$ and the $d$-dimensional polynomial subspace $\mathcal{V}_d = \operatorname{span}\{1, x, \ldots, x^{d-1}\}$.
The reproducing kernel of $\mcal{V}$ is given by
$$
    k(x, y) := \frac{2 \cosh(1 - \max\{x, y\}) \cosh(1 + \min\{x, y\})}{\sinh(2)} .
$$

Phase diagrams for the probability of $\mu(\boldsymbol{x}) \le 2$ are presented in Figure~\ref{fig:mu_h1}.
Several interesting conclusions can be drawn from these observations.
\begin{itemize}
    \item Christoffel sampling follows a sample size bound of $n \gtrsim d\log(d)$ which is consistent with the prediction of Theorem~\ref{thm:iid_bounds}.
    Quite surprisingly, we cannot observe any influence of the embedding constant $\tilde C_d \asymp d$.
    \item Continuous volume sampling follows a sample size bound of $n \gtrsim d^2$.
    This can probably be attributed to the fact that $\mcal{V}_d$ is not spanned by the spectral basis of $\mcal{V}$.
    \item Subspace-informed volume sampling follows the optimal sample size bound $n\gtrsim d$.
    The factor for the rate in the plot is $1.5$, which is close to the optimal factor $1$.
\end{itemize}

The efficiency of the proposed subsampling scheme is illustrated in Figure~\ref{fig:h1_greedy}.
It can be seen that the proposed algorithm produces an acceptable quasi-optimality constant $\mu(\boldsymbol{x})\le 3$ already for the minimal possible sample size of $n=d$ in all $100$ repetitions of the experiment.

\begin{figure}[htb]
    \centering
    \includegraphics[width=\textwidth]{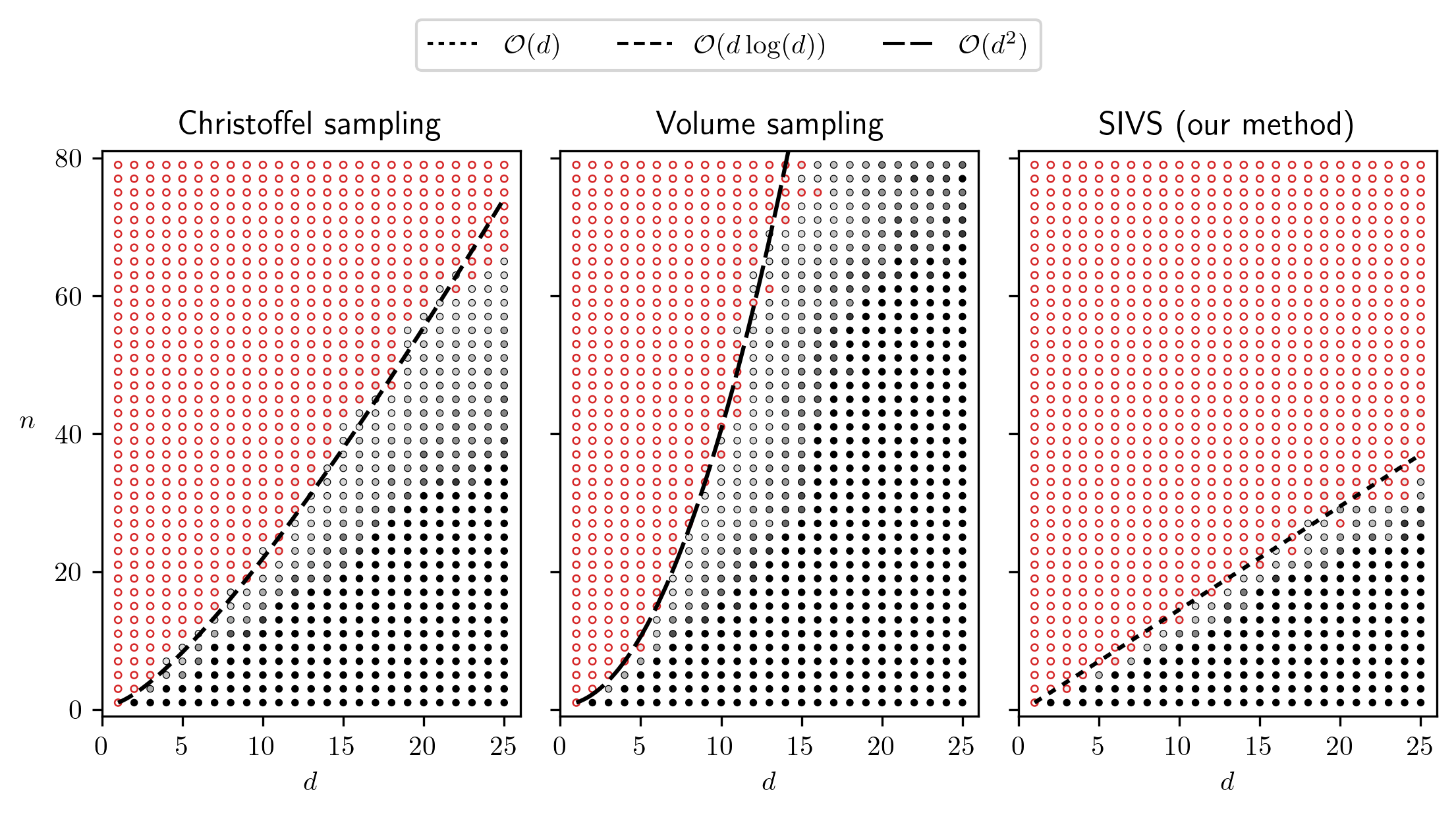}
    \caption{Phase diagrams for the probability $\mbb{P}\bracs*{\mu(\boldsymbol{x}) \le 2}$ with $\mcal{V} = H^1([-1,1], \tfrac12\dx)$ and where $\mcal{V}_d$ is spanned by polynomials.
    The probability is estimated using $200$ independent samples $\boldsymbol{x}\in\mcal{X}^n$ for different dimensions $d$ and sample sizes $n$.
    White marks a probability of $1$, black a probability of $0$.
    Points having $\mbb{P}\bracs*{\mu(\boldsymbol{x}) \le 2} \ge \tfrac12$ are marked with bold, red borders.
    The factor for the linear rate in the phase diagram for subspace-informed volume sampling is $1.5$.
    }
    \label{fig:mu_h1}
\end{figure}

\begin{figure}[htb]
    \centering
    \includegraphics[width=\textwidth]{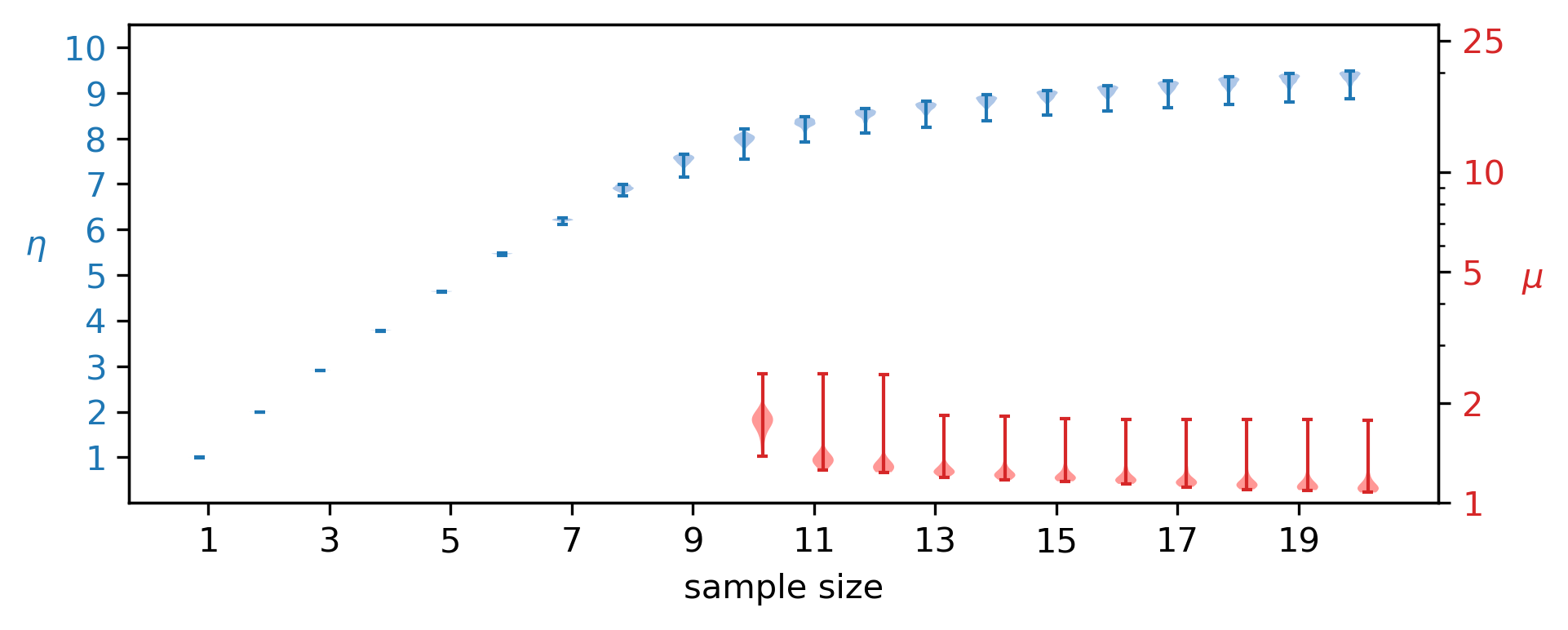}
    \caption{Violin plot of the submodular surrogate $\eta$ and the suboptimality constant $\mu$ for the first $20$ steps of the greedy optimisation procedure.
    The initial sample $\mcal{D}$ is of size $100$ and drawn using the $L^2$-Christoffel sampling method.
    The experiment was repeated $100$ times to compute the violins.
    $\mcal{V} = H^1([-1,1], \tfrac12\dx)$ and $d=10$ with $\mcal{V}_d$ being spanned by polynomials.}
    \label{fig:h1_greedy}
\end{figure}

\subsection{A polynomial subspace of \texorpdfstring{$\boldsymbol{H^1_0([-1,1], \tfrac12\dx)}$}{H10 with uniform measure}}
\label{sec:experiments:h10}

Consider the Hilbert space $\mathcal{V} = H^1_0([-1, 1], \nu)$ with $\nu = \tfrac12\dx$ and the $d$-dimensional polynomial subspace $\mathcal{V}_d$ that is spanned by the monomials $\operatorname{span}\{1, x, \ldots, x^{d+1}\}$ subject to the boundary conditions $v(-1) = v(1) = 0$ for all $v\in\mcal{V}_d$.
The reproducing kernel of $\mcal{V}$ is given by
$$
    k(x, y) := \frac{(\min\braces{x, y} + 1) (1 - \max\braces{x, y})}{4} .
$$

Phase diagrams for the probability of $\mu(\boldsymbol{x}) \le 2$ are presented in Figure~\ref{fig:mu_h10}, and the convergence of the proposed greedy subsampling method is illustrated in Figure~\ref{fig:h10_greedy}.
The qualitative observations remain similar to the $H^1([-1,1], \tfrac12\dx)$-case, and even the factor for linear rate in the subspace-informed volume sampling plot remains the same ($1.5$).
The greedy subsampling algorithm produces an acceptable quasi-optimality constant $\mu(\boldsymbol{x}) \le 3$ for the almost optimal sample size $n = d+2$ in all $100$ repetitions of the experiment.

\begin{figure}
    \centering
    \includegraphics[width=\textwidth]{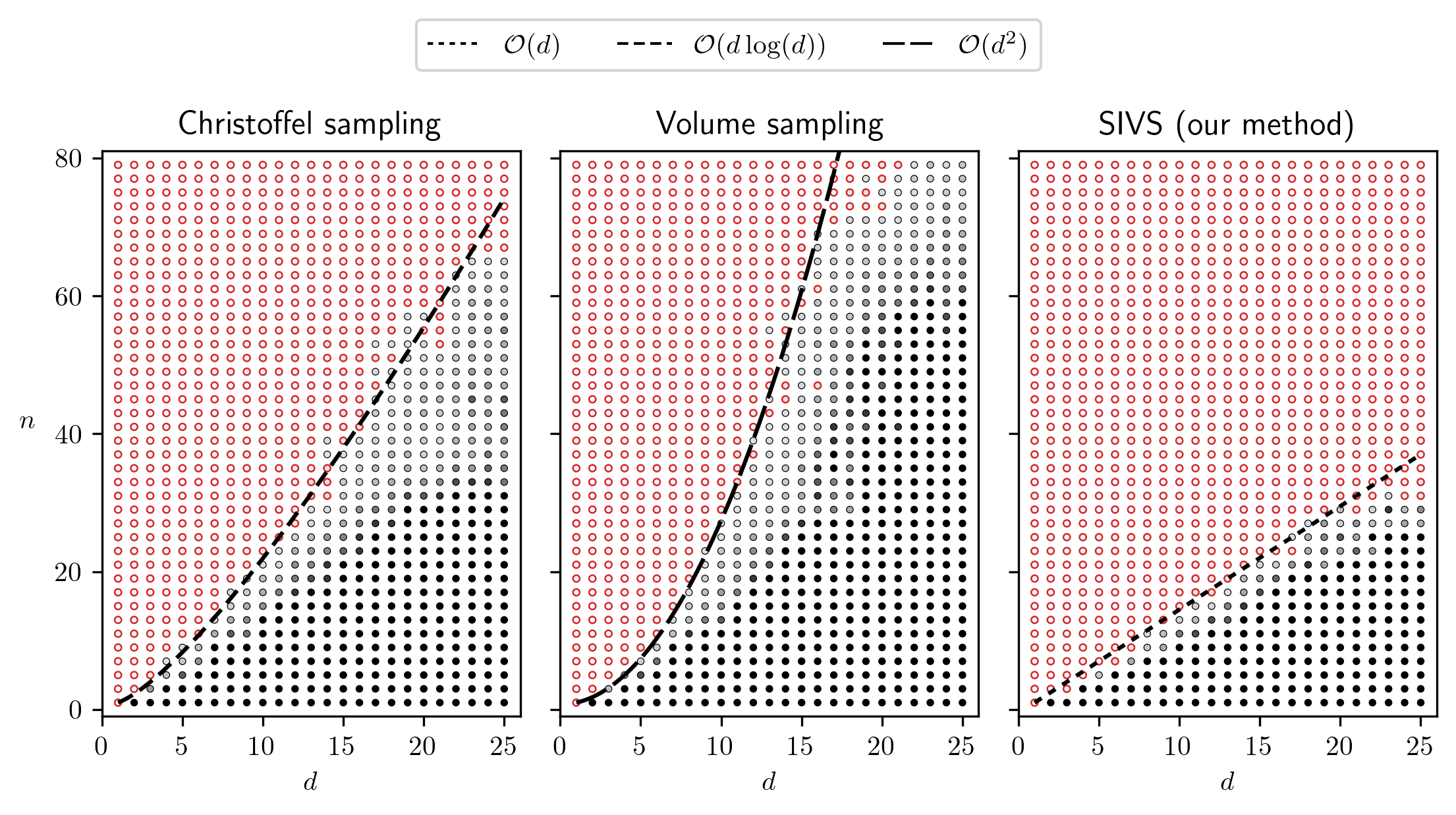}
    \caption{Phase diagrams for the probability $\mbb{P}\bracs*{\mu(\boldsymbol{x}) \le 2}$ with $\mcal{V} = H^1_0([-1,1], \tfrac12\dx)$ and where $\mcal{V}_d$ is spanned by polynomials.
    The probability is estimated using $200$ independent samples $\boldsymbol{x}\in\mcal{X}^n$ for different dimensions $d$ and sample sizes $n$.
    White marks a probability of $1$, black a probability of $0$.
    Points having $\mbb{P}\bracs*{\mu(\boldsymbol{x}) \le 2} \ge \tfrac12$ are marked with bold, red borders.
    The factor for the linear rate in the phase diagram for subspace-informed volume sampling is $1.5$.
    }
    \label{fig:mu_h10}
\end{figure}

\begin{figure}
    \centering
    \includegraphics[width=\textwidth]{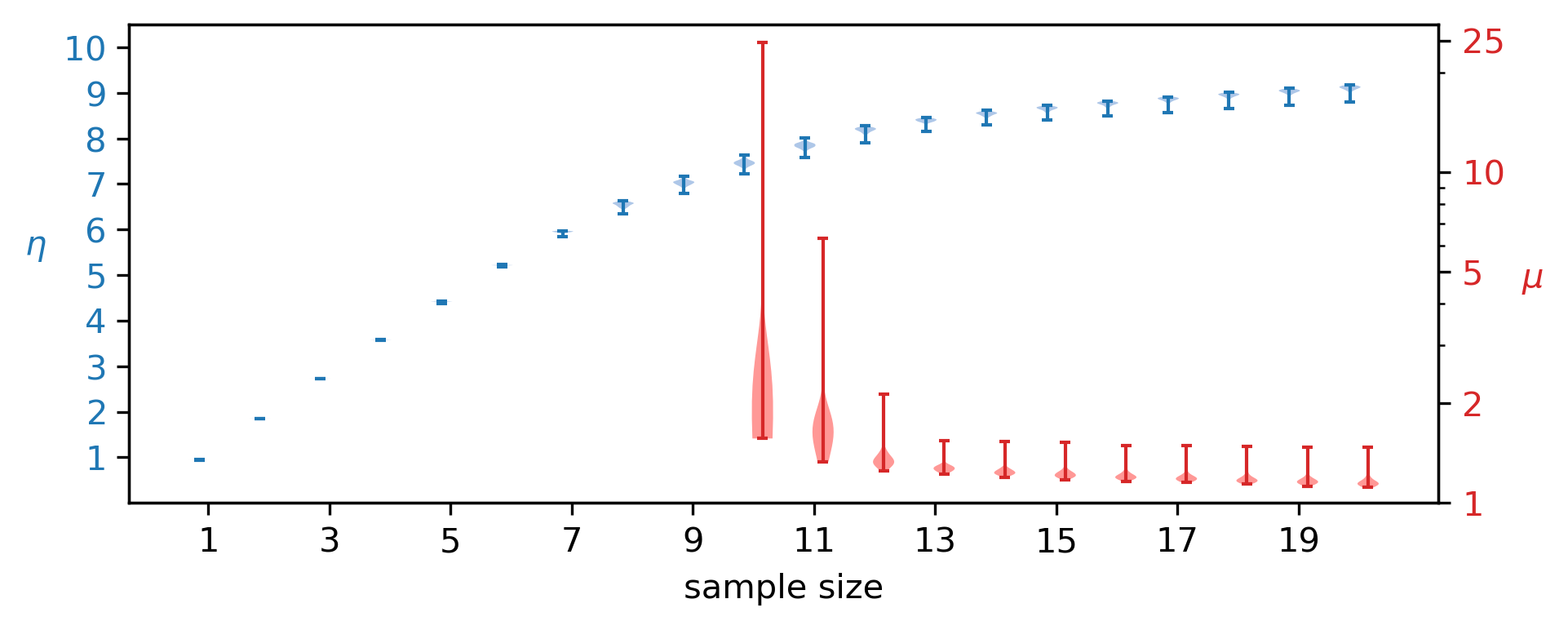}
    \caption{Violin plot of the submodular surrogate $\eta$ and the suboptimality constant $\mu$ for the first $20$ steps of the greedy optimisation procedure.
    The initial sample $\mcal{D}$ is of size $100$ and drawn using the $L^2$-Christoffel sampling method.
    The experiment was repeated $100$ times to compute the violins.
    $\mcal{V} = H^1_0([-1,1], \tfrac12\dx)$ and $d=10$ with $\mcal{V}_d$ being spanned by polynomials.}
    \label{fig:h10_greedy}
\end{figure}

\subsection{A polynomial subspace of \texorpdfstring{$\boldsymbol{H^1(\mbb{R}, \mcal{N}(0,1))}$}{H1 with Gaussian measure}}
\label{sec:experiments:gaussian}

Consider the Hilbert space $\mathcal{V} = H^1(\mbb{R}, \nu)$ with $\nu := \mcal{N}(0, 1)$.
Moreover, consider the $d$-dimensional polynomial subspaces $\mathcal{V}_d = \operatorname{span}\{1, x, \ldots, x^{d-1}\}$.
The reproducing kernel of $\mcal{V}$ is given by
$$
	k(x, y) := \sqrt{\tfrac{\pi}{2}}\exp\pars*{\tfrac{x^2 + y^2}{2}} \pars*{\operatorname{erf}\pars*{\tfrac{\min\braces{x, y}}{\sqrt{2}}} + 1} \pars*{1 - \operatorname{erf}\pars*{\tfrac{\max\braces{x, y}}{\sqrt{2}}}} .
$$

\begin{minipage}{\textwidth}
\begin{remark}
    As discussed in equation~\eqref{eq:finite_trace}, a sufficient condition for the finiteness of the measures $\mathfrak{K}\tfrac{K}{K_d}\nu$ is $\int K(x) \dx[\nu] < \infty$.
    This \textbf{sufficient} condition is satisfied for the choice $\nu = \mcal{N}(0, 1+\varepsilon)$ for any $\varepsilon > 0$, but it is not satisfied for the choice $\varepsilon = 0$.
    However, since this condition may not be \textbf{necessary}, we perform the experiments for this test case with the choice $\nu = \mcal{N}(0, 1)$.
    We observe that the resulting method indeed produces adequate results.
\end{remark}
\end{minipage}

Phase diagrams for the probability of $\mu(\boldsymbol{x}) \le 2$ are presented in Figure~\ref{fig:mu_h1gauss}, and the convergence of the proposed greedy subsampling method is illustrated in Figure~\ref{fig:wh1_greedy}.
As in both preceding cases, the greedy subsampling algorithm performs quite well, producing an acceptable quasi-optimality constant $\mu(\boldsymbol{x}) \le 3$ for the almost optimal sample size $n = d+1$ in all $100$ repetitions of the experiment.
However, in contrast to the preceding test cases, the phase transition boundary for continuous volume sampling is linear in this case.
This is explained theoretically by the fact that the Hermite polynomials form the spectral basis for $\mcal{V}$.
(See the discussion following Corollary~\ref{cor:ayoub}.)

\begin{figure}
    \centering
    \includegraphics[width=\textwidth]{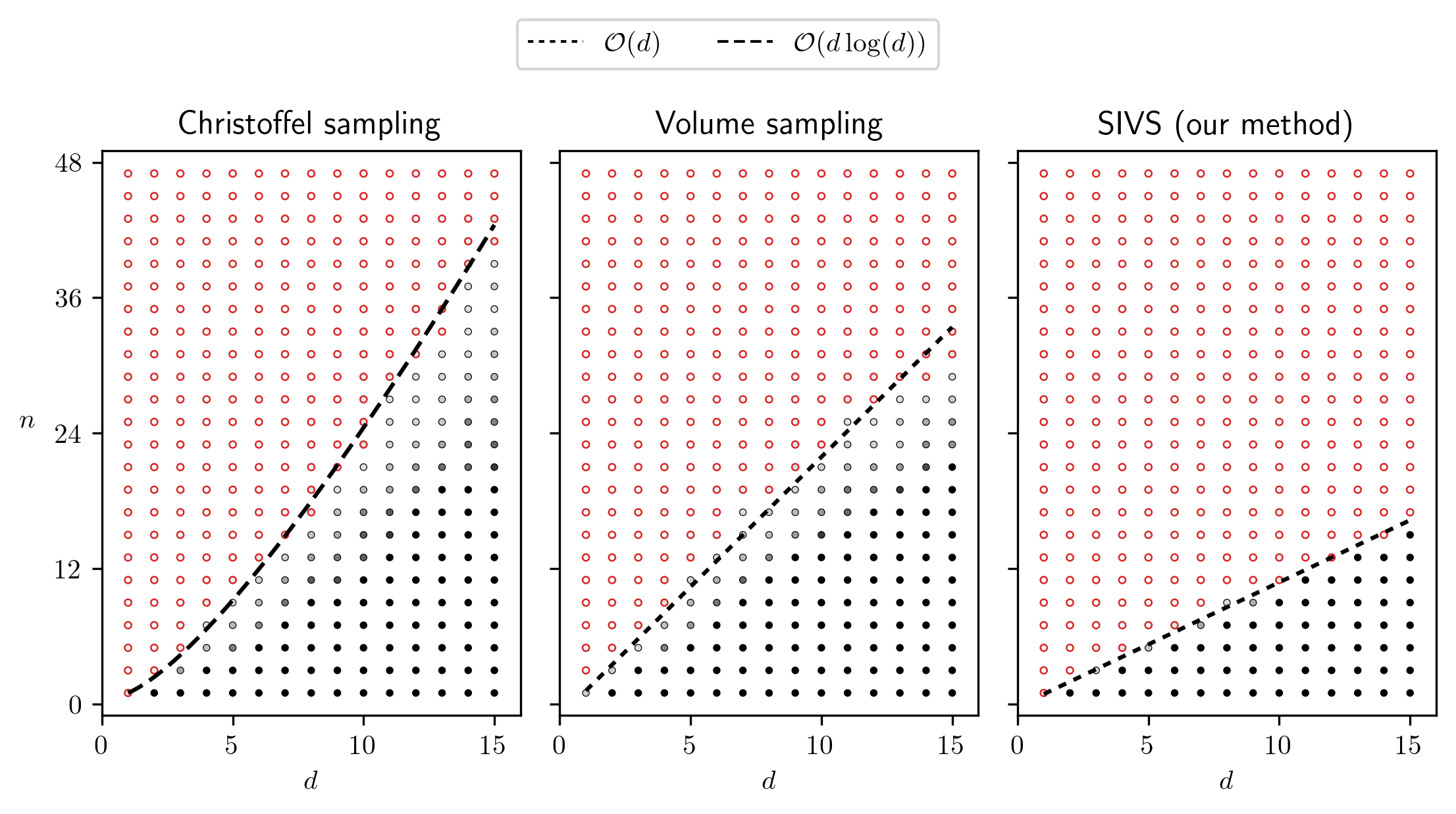}
    \caption{Phase diagrams for the probability $\mbb{P}\bracs*{\mu(\boldsymbol{x}) \le 2}$ with $\mcal{V} = H^1(\mbb{R}, \mcal{N}(0, 1))$ and where $\mcal{V}_d$ is spanned by polynomials.
    The probability is estimated using $200$ independent samples $\boldsymbol{x}\in\mcal{X}^n$ for different dimensions $d$ and sample sizes $n$.
    White marks a probability of $1$, black a probability of $0$.
    Points having $\mbb{P}\bracs*{\mu(\boldsymbol{x}) \le 2} \ge \tfrac12$ are marked with bold, red borders.
    The factor for the linear rate in the phase diagram for volume sampling is $2.1$.
    The factor for the linear rate in the phase diagram for subspace-informed volume sampling is $1.2$.
    }
    \label{fig:mu_h1gauss}
\end{figure}

\begin{figure}
    \centering
    \includegraphics[width=\textwidth]{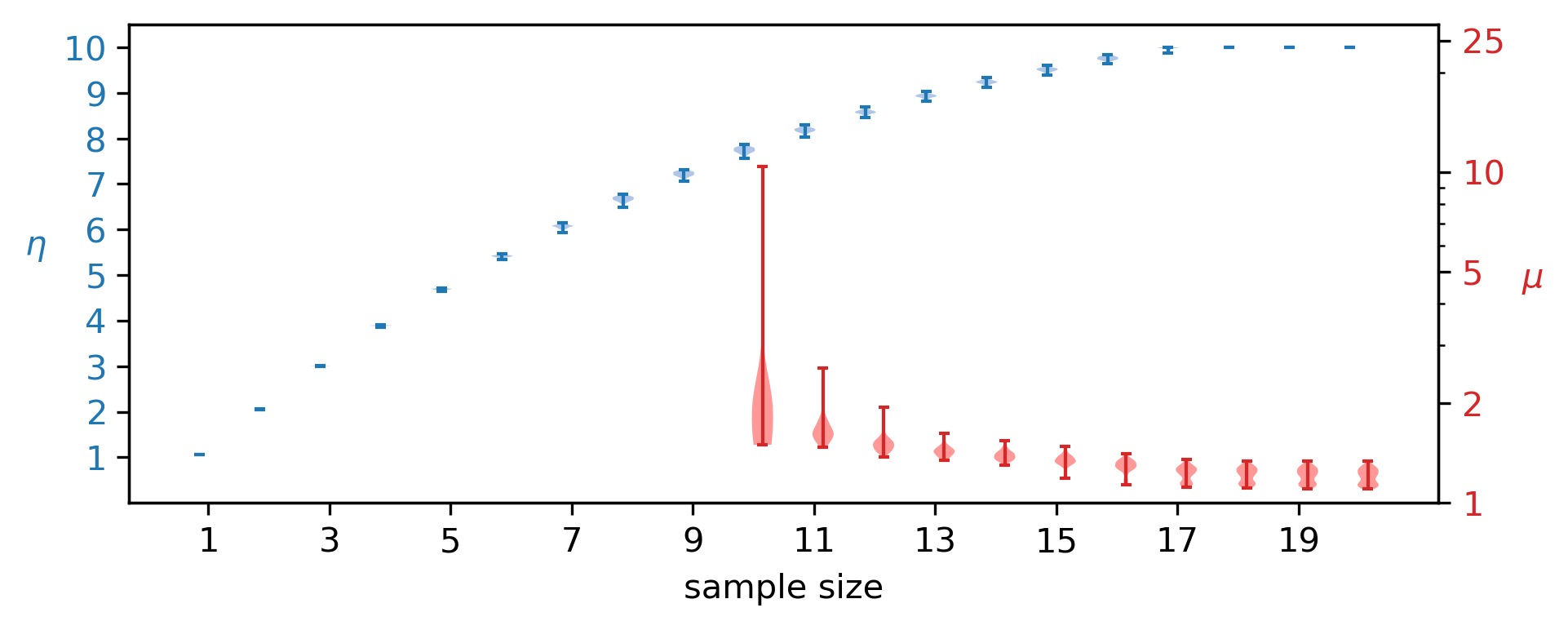}
    \caption{Violin plot of the submodular surrogate $\eta$ and the suboptimality constant $\mu$ for the first $20$ steps of the greedy optimisation procedure.
    The initial sample $\mcal{D}$ is of size $100$ and drawn using the $L^2$-Christoffel sampling method.
    The experiment was repeated $100$ times to compute the violins.
    $\mcal{V} = H^1(\mbb{R}, \mcal{N}(0, 1))$ and $d=10$ with $\mcal{V}_d$ being spanned by polynomials.}
    \label{fig:wh1_greedy}
\end{figure}

\section{Discussion}
\label{sec:discussion}

\paragraph{General}
We view this paper only as a proof of concept.
More experiments in higher dimensions and with higher-order RKHS, like unanchored Sobolev spaces~\cite{sarazin_2023} and RKHS with Gaussian kernels, are necessary to assess the applicability of the proposed methodology in practice.
This is non-trivial, since point evaluations are unstable without the proper (orthogonal) polynomial basis and kernel matrices become unstable when the points in the point set get too close.
A deeper investigation is required to see if these numerical instabilities can be overcome.

\paragraph{Theory}
While the proposition~\ref{cor:ayoub},~\ref{thm:iid_bounds} and~\ref{thm:expectation} provide an initial analysis, none of these statements explains the linear sample size bounds observed for SIVS in the experiments.
It would also be interesting to investigate why $L^2$-Christoffel sampling seems to provide a close-to-optimal $d\log(d)$ sample size bound (without influence of the embedding constant $\tilde{C}_d$) or if this observation is a mere artifact of the chosen experimental parameters.

\paragraph{Application}
This manuscript focuses on finding sample points adapted to the approximation problem at hand.
However, this presupposes the ability to generate new data, which is not possible in many classical approximation tasks, where we have to make the best of the data that is already given.
An interesting direction of research would, therefore, be to restrict the space $\mcal{V}_d$ to be adapted to the given data.
Since $\mu(\boldsymbol{x})$ depends on the smallest eigenvalue of $G^{\boldsymbol{x}}$, it seems natural to use a span of the eigenspaces of $G^{\boldsymbol{x}}\in\mbb{R}^{d\times d}$ as a suitable subspace of $\mcal{V}_d$.
This projection introduces an additional approximation error, but it is conceivable that the eigenvectors corresponding to the smallest eigenvalues have a high $\mcal{V}$-norm.
Projecting onto the complement of these vectors would, therefore, only result in a minor additive approximation error if the sought function is sufficiently regular, and the overall error may indeed decrease.

Many practical applications rely on data afflicted by general random noise (in contrast to the deterministic perturbations considered in Section~\ref{sec:noisy}).
Proposition~\ref{prop:error_bound_noisy} can be extended to this setting, which is shown, in a more general setting, in Theorem~2.3 in~\cite{cohen2022nonlinear}.
Note, however, that the approximation from Section~\ref{sec:noisy} depends on an appropriate choice of norm $\norm{\bullet}_{\mcal{R}}$ for measuring the noise.
Since this norm also influences the approximation $u^{\boldsymbol{x},\mcal{S}}$, acting as a regulariser, the optimal choice of this norm would be an interesting research topic.

The results in this paper depend on the choice of an appropriate RKHS and require explicit knowledge of the corresponding reproducing kernel.
Finding such an explicit expression may be difficult when the domain $\mcal{X}$ is nontrivial.
For certain RKHS of band-limited functions arguments relating to ``escaping the native space'' (cf.~section~2.3.3 in~\cite{kempf_thesis}) may yield error bounds even if the regularity of the sought function $u$ is overestimated.
But as of now, choosing the appropriate kernel remains a practical limitation.

Finally, note that   we assumed that a $\mathcal{V}$-orthonormal basis is known.
In practice, however, such a basis may be hard to compute, especially when the domain is non-trivial.
A discussion of this problem and related works in the $L^2$-setting can be found in~\cite{trunschke2024optimalsamplingsquaresapproximation}.

\section*{Acknowledgements}
This project is funded by the ANR-DFG project COFNET (ANR-21-CE46-0015).
This work was partially conducted within the France 2030 framework programme, Centre Henri Lebesgue ANR-11-LABX-0020-01.

Our code makes extensive use of the Python packages: \texttt{numpy}~\cite{numpy}, \texttt{scipy}~\cite{scipy}, and \texttt{matplotlib}~\cite{matplotlib}.

{
    \emergencystretch=3em
    \printbibliography
}

\clearpage

\appendix
\section{Proof of the error bound~\eqref{eq:PBDW_bound}}
\label{app:PBDW_bound}

To prove this error bound, we start by noting that $\norm{(I - P_{\mcal{V}_{\boldsymbol{x}}})v}_{\mcal{V}}^2 = \norm{v}_{\mcal{V}}^2 - \norm{P_{\mcal{V}_{\boldsymbol{x}}}v}_{\mcal{V}}^2$, and therefore
\begin{align}
	\mu(\boldsymbol{x})^{-2}
	&= \inf_{v \in \mcal{V}_d} \tfrac{\norm{P_{\mcal{V}_{\boldsymbol{x}}}v}_{\mcal{V}}^2}{\norm{v}_{\mcal{V}}^2} 
	= \inf_{v \in \mcal{V}_d} 1 - \tfrac{\norm{(I - P_{\mcal{V}_{\boldsymbol{x}}})v}_{\mcal{V}}^2}{\norm{v}_{\mcal{V}}^2} 
	= 1 - \sup_{v \in \mcal{V}_d} \tfrac{\norm{(I-P_{\mcal{V}_{\boldsymbol{x}}})v}_{\mcal{V}}^2}{\norm{v}_{\mcal{V}}^2} \\
	&= 1 - \Big(\sup_{v \in \mcal{V}_d} \sup_{w \in \mcal{V}_{\boldsymbol{x}}^\perp} \tfrac{\pars{v, w}_{\mcal{V}}}{\norm{v}_{\mcal{V}}\norm{w}_{\mcal{V}}}\Big)^2 
	= 1 - \sup_{w \in \mcal{V}_{\boldsymbol{x}}^\perp} \tfrac{\norm{P_{\mcal{V}_d}w}_{\mcal{V}}^2}{\norm{w}_{\mcal{V}}^2} \\
	&= \inf_{w \in \mcal{V}_{\boldsymbol{x}}^\perp} 1 - \tfrac{\norm{P_{\mcal{V}_d}w}_{\mcal{V}}^2}{\norm{w}_{\mcal{V}}^2} 
	= \inf_{w \in \mcal{V}_{\boldsymbol{x}}^\perp} \tfrac{\norm{(I-P_{\mcal{V}_d})w}_{\mcal{V}}^2}{\norm{w}_{\mcal{V}}^2}
	.
\end{align}
This implies
\begin{equation}
\label{eq:mu_identity}
	\mu(\boldsymbol{x})
	= \sup_{w \in \mcal{V}_{\boldsymbol{x}}^\perp} \frac{\norm{w}_{\mcal{V}}}{\norm{(I-P_{\mcal{V}_d})w}_{\mcal{V}}}
	.
\end{equation}
We can, therefore, conclude that
\begin{align}
	\norm{u - u^{d,\boldsymbol{x}}}_{\mcal{V}}
	&= \norm{u - (P_{\mcal{V}_d}^{\boldsymbol{x}} + P_{\mcal{V}_{\boldsymbol{x}}}(I - P_{\mcal{V}_d}^{\boldsymbol{x}}))u}_{\mcal{V}} \\
	&= \norm{(I -  P_{\mcal{V}_{\boldsymbol{x}}})(I - P_{\mcal{V}_d}^{\boldsymbol{x}})u}_{\mcal{V}} \\
	&\le \mu(\boldsymbol{x}) \norm{(I - P_{\mcal{V}_d}) (I -  P_{\mcal{V}_{\boldsymbol{x}}})(I - P_{\mcal{V}_d}^{\boldsymbol{x}})u}_{\mcal{V}} .
\end{align}
Next, let $\mathcal{W} := (P_{\mathcal{V}_{\boldsymbol{x}}}\!\mathcal{V}_d)$ and
note that $\mu(\boldsymbol{x}) < \infty$ ensures
    that $P_{\mcal{V}_{\boldsymbol{x}}}|_{\mathcal{V}_d} : \mcal{V}_d \to \mathcal{W}$ is invertible and
    \begin{align}
    	P_{\mcal{V}_d}^{\boldsymbol{x}} u
    	&:= \argmin_{v\in\mcal{V}_d} \norm{P_{\mcal{V}_{\boldsymbol{x}}}(u - v)}_{\mcal{V}} \\
    	&= (P_{\mcal{V}_{\boldsymbol{x}}}|_{\mathcal{V}_d})^{-1} \argmin_{v\in (P_{\mcal{V}_{\boldsymbol{x}}}\!\mcal{V}_d)} \norm{P_{\mcal{V}_{\boldsymbol{x}}}u - v}_{\mcal{V}} \\
    	&= (P_{\mcal{V}_{\boldsymbol{x}}}|_{\mathcal{V}_d})^{-1} P_{(P_{\mcal{V}_{\boldsymbol{x}}}\!\mcal{V}_d)} P_{\mcal{V}_{\boldsymbol{x}}}u \\
    	&= (P_{\mcal{V}_{\boldsymbol{x}}}|_{\mathcal{V}_d})^{-1} P_{(P_{\mcal{V}_{\boldsymbol{x}}}\!\mcal{V}_d)} u \\
    	&= (P_{\mcal{V}_{\boldsymbol{x}}}|_{\mathcal{V}_d})^{-1} P_{\mathcal{W}} u
    	,
    \end{align}
    where the second-to-last equality follows from $(P_{\mcal{V}_{\boldsymbol{x}}}\!\mcal{V}_d) \subseteq \mcal{V}_{\boldsymbol{x}}$. 
Consequently, $P_{\mathcal{V}_{\boldsymbol{x}}} P_{\mathcal{V}_{d}}^{\boldsymbol{x}} = P_{\mathcal{V}_{\boldsymbol{x}}}|_{\mathcal{V}_d} P_{\mathcal{V}_{d}}^{\boldsymbol{x}} = P_{\mathcal{W}}$ and
\begin{align}
    (I - P_{\mathcal{V}_d}) (I - P_{\mathcal{V}_{\boldsymbol{x}}}) (I - P_{\mathcal{V}_d}^{\boldsymbol{x}})
    &= (I - P_{\mathcal{V}_d}) (I - P_{\mathcal{V}_d}^{\boldsymbol{x}})
    - (I - P_{\mathcal{V}_d}) P_{\mathcal{V}_{\boldsymbol{x}}} (I - P_{\mathcal{V}_d}^{\boldsymbol{x}}) \\
    &= (I - P_{\mathcal{V}_d})
    - (I - P_{\mathcal{V}_d}) (P_{\mathcal{V}_{\boldsymbol{x}}} - P_{\mathcal{V}_{\boldsymbol{x}}}P_{\mathcal{V}_d}^{\boldsymbol{x}}) \\
    &= (I - P_{\mathcal{V}_d})
    - (I - P_{\mathcal{V}_d}) (P_{\mathcal{V}_{\boldsymbol{x}}} - P_{\mathcal{W}}) \\
    &= (I - P_{\mathcal{V}_d}) (I - P_{\mathcal{V}_{\boldsymbol{x}}} + P_{\mathcal{W}}) \\
    &= (I - P_{\mathcal{V}_d}) (P_{\mathcal{V}_{\boldsymbol{x}}^\perp} + P_{\mathcal{W}}) \\
    &= (I - P_{\mathcal{V}_d}) P_{\mathcal{V}_{\boldsymbol{x}}^\perp\oplus\mathcal{W}} ,
\end{align}
where the final equality follows because $\mathcal{W}\subseteq\mathcal{V}_{\boldsymbol{x}}$ implies $\mathcal{W} \perp \mathcal{V}_{\boldsymbol{x}}^\perp$.
Now, observe that
\begin{align}
    (\mathcal{V}_{\boldsymbol{x}}^\perp\oplus\mathcal{W})^\perp
    &= \mathcal{V}_{\boldsymbol{x}}\cap \mathcal{W}^\perp \\
    &= \{v\in\mathcal{V}_{\boldsymbol{x}} \;:\; (v, w)_{\mathcal{V}} = 0\text{ for all }  w\in\mathcal{W}\} \\
    &= \{v\in\mathcal{V}_{\boldsymbol{x}} \;:\; (v, P_{\mathcal{V}_{\boldsymbol{x}}}  w)_{\mathcal{V}} = 0\text{ for all } w\in\mathcal{V}_d\} \\
    &= \{v\in\mathcal{V}_{\boldsymbol{x}} \;:\; (v, w)_{\mathcal{V}} = 0\text{ for all }  w\in\mathcal{V}_d\} \\
    &= \mathcal{V}_{\boldsymbol{x}}\cap \mathcal{V}_{d}^\perp.
\end{align}
From this, we deduce that 
\begin{align}
    (I - P_{\mathcal{V}_d}) (I - P_{\mathcal{V}_{\boldsymbol{x}}}) (I - P_{\mathcal{V}_d}^{\boldsymbol{x}})
    &=(I - P_{\mathcal{V}_d}) (I - P_{\mathcal{V}_{\boldsymbol{x}}\cap \mathcal{V}_{d}^\perp}) \\
    &= I - P_{\mathcal{V}_d} - P_{\mathcal{V}_{\boldsymbol{x}}\cap \mathcal{V}_{d}^\perp} \\
    &= I - P_{\mathcal{V}_d \oplus (\mathcal{V}_{\boldsymbol{x}}\cap \mathcal{V}_{d}^\perp) },
\end{align}
which concludes the proof.

\section{Lemma~\ref*{lem:tau_bound}}
\label{app:tau_bound}

\begin{lemma}
\label{lem:tau_bound}
    Let $\boldsymbol{x}\in\mcal{X}^n$ and define the operator
    $$
        A = P_{\mcal{V}_{\boldsymbol{x}}} P_{\mcal{V}_d}^{\boldsymbol{x}} (I - P_{\mcal{V}_d}) .
    $$
    Then $\norm{A}_{\mcal{V}\to\mcal{V}} \le \norm{I - G^{\boldsymbol{x}}}_{\mathrm{Fro}} + \norm{I - G^{\boldsymbol{x}}}_{2}$.
\end{lemma}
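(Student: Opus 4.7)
The plan is to pass to a matrix representation of $A$. I would introduce a $\mathcal{V}$-orthonormal basis $\omega_1,\ldots,\omega_n$ of $\mathcal{V}_{\boldsymbol{x}}$, complementing the given $\mathcal{V}$-orthonormal basis $b_1,\ldots,b_d$ of $\mathcal{V}_d$, and define the cross-Gramian $H\in\mathbb{R}^{d\times n}$ with entries $H_{jk}:=(b_j,\omega_k)_{\mathcal{V}}$; by Remark~\ref{rmk:cross-gramian}, $G^{\boldsymbol{x}}=HH^\intercal$ and $P_{\mathcal{V}_{\boldsymbol{x}}}b_j=\sum_k H_{jk}\omega_k$. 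The first step is to record the structural facts that $A$ maps into $\mathcal{V}_{\boldsymbol{x}}$, that $A|_{\mathcal{V}_d}=0$ (from the factor $I-P_{\mathcal{V}_d}$), and that $P_{\mathcal{V}_d}^{\boldsymbol{x}}P_{\mathcal{V}_d}=P_{\mathcal{V}_d}$, which allows the rewriting $A=P_{\mathcal{V}_{\boldsymbol{x}}}P_{\mathcal{V}_d}^{\boldsymbol{x}}-P_{\mathcal{V}_{\boldsymbol{x}}}P_{\mathcal{V}_d}$.

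Next, I would decompose any $v\in\mathcal{V}$ into $v_1:=P_{\mathcal{V}_{\boldsymbol{x}}}v\in\mathcal{V}_{\boldsymbol{x}}$ and $v_2:=(I-P_{\mathcal{V}_{\boldsymbol{x}}})v\in\mathcal{V}_{\boldsymbol{x}}^\perp$, and isolate the component $v_2^{\tilde b}$ of $v_2$ in $\tilde{\mathcal{V}}_d:=\operatorname{span}\{\tilde b_j:1\le j\le d\}$ with $\tilde b_j:=(I-P_{\mathcal{V}_{\boldsymbol{x}}})b_j$. Since $(v_2,b_j)_{\mathcal{V}}=(v_2,\tilde b_j)_{\mathcal{V}}$, only $v_2^{\tilde b}$ contributes to $Av$; and because $\mathcal{V}_{\boldsymbol{x}}\perp\tilde{\mathcal{V}}_d$, the separate estimates $\|v_1\|_{\mathcal{V}},\|v_2^{\tilde b}\|_{\mathcal{V}}\le\|v\|_{\mathcal{V}}$ hold, making the triangle-inequality split $\|Av\|_{\mathcal{V}}\le\|Av_1\|_{\mathcal{V}}+\|Av_2^{\tilde b}\|_{\mathcal{V}}$ an effective tool.

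For the $\mathcal{V}_{\boldsymbol{x}}$-part, a direct computation yields the $\omega$-coordinate representation $Av_1=H^\intercal(G^{-1}-I)H\,v_1^\omega$; via the SVD $H=U\Sigma V^\intercal$, this matrix is orthogonally similar to a diagonal matrix with entries $1-\sigma_j^2$, giving the tight bound $\|Av_1\|_{\mathcal{V}}\le\|I-G^{\boldsymbol{x}}\|_2\|v_1\|_{\mathcal{V}}$. For the $\tilde{\mathcal{V}}_d$-part, the Gramian of $(\tilde b_j)$ equals $I-G^{\boldsymbol{x}}$; parametrising $v_2^{\tilde b}=\sum_k\alpha_k\tilde b_k$ yields $\|v_2^{\tilde b}\|^2=\alpha^\intercal(I-G^{\boldsymbol{x}})\alpha$ and $Av_2^{\tilde b}=-H^\intercal(I-G^{\boldsymbol{x}})\alpha$ in $\omega$-coordinates. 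After an $(I-G^{\boldsymbol{x}})^{1/2}$-change of basis to an orthonormal basis of $\tilde{\mathcal{V}}_d$, this contribution is encoded by $-H^\intercal(I-G^{\boldsymbol{x}})^{1/2}$, whose operator norm I would bound by its Frobenius norm and then simplify using Lemma~\ref{lem:largest_eigenvalue} (which gives $\|H^\intercal\|_2\le 1$) to arrive at $\|Av_2^{\tilde b}\|_{\mathcal{V}}\le\|I-G^{\boldsymbol{x}}\|_{\mathrm{Fro}}\|v_2^{\tilde b}\|_{\mathcal{V}}$.

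The main obstacle will be the clean identification of this second bound by exactly $\|I-G^{\boldsymbol{x}}\|_{\mathrm{Fro}}$: the direct computation gives $\|H^\intercal(I-G^{\boldsymbol{x}})^{1/2}\|_{\mathrm{Fro}}^2=\operatorname{tr}(G^{\boldsymbol{x}}(I-G^{\boldsymbol{x}}))$, which must be related to $\|I-G^{\boldsymbol{x}}\|_{\mathrm{Fro}}^2=\operatorname{tr}((I-G^{\boldsymbol{x}})^2)$ by a careful eigenvalue argument that exploits $G^{\boldsymbol{x}}\preceq I$ (Lemma~\ref{lem:largest_eigenvalue}) together with $0\le\lambda_j(I-G^{\boldsymbol{x}})\le 1$. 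Combining the two contributions via the triangle inequality then yields the claimed bound $\|A\|_{\mathcal{V}\to\mathcal{V}}\le\|I-G^{\boldsymbol{x}}\|_{\mathrm{Fro}}+\|I-G^{\boldsymbol{x}}\|_2$.
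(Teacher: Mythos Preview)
Your decomposition is the same as the paper's: splitting the input as $v=P_{\mcal{V}_{\boldsymbol{x}}}v+(I-P_{\mcal{V}_{\boldsymbol{x}}})v$ reproduces exactly the operator split $A=A_1+A_2$ used there. Your bound $\norm{A_1}_{\mcal{V}\to\mcal{V}}\le\norm{I-G^{\boldsymbol{x}}}_2$ is correct and in fact sharper than the paper's Frobenius bound for that piece (the paper estimates $\lambda_{\max}\le\operatorname{tr}$ at this point, whereas your SVD computation gives the exact value).

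The genuine gap is in the $\tilde{\mcal{V}}_d$-part. After the change of basis you obtain $\norm{H^\intercal(I-G^{\boldsymbol{x}})^{1/2}}_{\mathrm{Fro}}^2=\operatorname{tr}\big(G^{\boldsymbol{x}}(I-G^{\boldsymbol{x}})\big)$ and want to bound this by $\norm{I-G^{\boldsymbol{x}}}_{\mathrm{Fro}}^2=\operatorname{tr}\big((I-G^{\boldsymbol{x}})^2\big)$. No ``careful eigenvalue argument'' will produce this: the inequality $\lambda(1-\lambda)\le(1-\lambda)^2$ is equivalent to $\lambda\le\tfrac12$ and fails for eigenvalues of $G^{\boldsymbol{x}}$ close to $1$. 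Concretely, for $G^{\boldsymbol{x}}=cI$ with $c\in(\tfrac12,1)$ one has $\operatorname{tr}(G(I-G))=dc(1-c)>d(1-c)^2=\operatorname{tr}((I-G)^2)$. The alternative route via $\norm{H^\intercal}_2\le1$ only gives $\norm{H^\intercal(I-G)^{1/2}}_{\mathrm{Fro}}^2\le\operatorname{tr}(I-G)$, which is again larger than $\norm{I-G}_{\mathrm{Fro}}^2$.

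The underlying problem is that by normalising against $\norm{v_2^{\tilde b}}_{\mcal{V}}^2=\alpha^\intercal(I-G)\alpha$ you lose a factor of $(I-G)^{1/2}$. The paper avoids this by observing that $A_2P_{\mcal{V}_{\boldsymbol{x}}}=0$ together with $\ker(A_2)\supseteq(\mcal{V}_{\boldsymbol{x}}+\mcal{V}_d)^\perp$ implies $\norm{A_2}_{\mcal{V}\to\mcal{V}}=\norm{A_2P_{\mcal{V}_d}}_{\mcal{V}\to\mcal{V}}$, so one may restrict the input to $\mcal{V}_d$ and normalise by $\norm{c}_2$ rather than $\sqrt{c^\intercal(I-G)c}$. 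In your matrix coordinates this yields $\norm{A_2}^2=\lambda_{\max}\big((I-G)G(I-G)\big)=\max_j\sigma_j^2(1-\sigma_j^2)^2\le\norm{I-G}_2^2$. With this correction your two pieces give $\norm{A}\le 2\norm{I-G}_2$, which actually improves on the lemma's stated bound; the paper arrives at $\norm{I-G}_{\mathrm{Fro}}+\norm{I-G}_2$ only because it uses the crude trace estimate for $A_1$.
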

\begin{proof}
    Observe that for every $u\in\mcal{V}$
    $$
        P_{\mcal{V}_d}^{\boldsymbol{x}} u
        := \argmin_{v\in\mcal{V}_d} \; \norm{u - v}_{\boldsymbol{x}}
        = \argmin_{v\in\mcal{V}_d} \; \norm{P_{\mcal{V}_{\boldsymbol{x}}} (P_{\mcal{V}_{\boldsymbol{x}}} u - v)}_{\mcal{V}}
        = P_{\mcal{V}_d}^{\boldsymbol{x}} P_{\mcal{V}_{\boldsymbol{x}}} u ,
    $$
    which allows us to write
    $$
        A
        = P_{\mcal{V}_{\boldsymbol{x}}} P_{\mcal{V}_d}^{\boldsymbol{x}} (I - P_{\mcal{V}_d})
        = P_{\mcal{V}_{\boldsymbol{x}}} (P_{\mcal{V}_d}^{\boldsymbol{x}} - P_{\mcal{V}_d})
        = \underbrace{P_{\mcal{V}_{\boldsymbol{x}}} (P_{\mcal{V}_d}^{\boldsymbol{x}} - P_{\mcal{V}_d}) P_{\mcal{V}_{\boldsymbol{x}}}}_{=: A_1}
        + \underbrace{P_{\mcal{V}_{\boldsymbol{x}}} P_{\mcal{V}_d} (P_{\mcal{V}_{\boldsymbol{x}}} - I)}_{=: A_2}
        .
    $$
    We can therefore use the triangle inequality $\norm{A}_{\mcal{V}\to\mcal{V}} \le \norm{A_1}_{\mcal{V}\to\mcal{V}} + \norm{A_2}_{\mcal{V}\to\mcal{V}}$ to split the computation into two steps.
    Both steps will rely heavily on the identities
    \begin{equation}
    \label{eq:projectors}
    \begin{aligned}
        P_{\mcal{V}_{\boldsymbol{x}}} k(\boldsymbol{x}, \bullet)
          &= k(\boldsymbol{x}, \bullet), 
        &
        P^{\boldsymbol{x}}_{\mcal{V}_d} b
          &= b, 
        &
        P_{\mcal{V}_d} b
          &= b, \\ 
        P_{\mcal{V}_{\boldsymbol{x}}} b
          &= b(\boldsymbol{x}) K(\boldsymbol{x})^{+} k(\boldsymbol{x}, \bullet),
        &
        P_{\mcal{V}_d}^{\boldsymbol{x}} k(\boldsymbol{x}, \bullet)
          &= b(\boldsymbol{x})^\intercal (G^{\boldsymbol{x}})^{+} b,
        &
        P_{\mcal{V}_d} k(\boldsymbol{x}, \bullet)
          &= b(\boldsymbol{x})^\intercal b
        ,
    \end{aligned}
    \end{equation}
    where the projectors are applied component-wise to the vectors of functions.
    The first line of the preceding identities follows by the projection properties and the second by simple computation.
    
    In the first step, we bound $\norm{A_1}_{\mcal{V}\to\mcal{V}}$.
    Using the identities~\eqref{eq:projectors} and the linearity of the projection operator, we compute
    $$
        P_{\mcal{V}_{\boldsymbol{x}}} P_{\mcal{V}_d}^{\boldsymbol{x}} P_{\mcal{V}_{\boldsymbol{x}}} k(\boldsymbol{x}, \bullet)
        = P_{\mcal{V}_{\boldsymbol{x}}} P_{\mcal{V}_d}^{\boldsymbol{x}} k(\boldsymbol{x}, \bullet)
        = P_{\mcal{V}_{\boldsymbol{x}}} b(\boldsymbol{x})^\intercal (G^{\boldsymbol{x}})^{+} b
        = \underbrace{b(\boldsymbol{x})^\intercal (G^{\boldsymbol{x}})^{+} b(\boldsymbol{x}) K(\boldsymbol{x})^{+}}_{=:D_1} k(\boldsymbol{x}, \bullet)
    $$
    and
    $$
        P_{\mcal{V}_{\boldsymbol{x}}} P_{\mcal{V}_d} P_{\mcal{V}_{\boldsymbol{x}}} k(\boldsymbol{x}, \bullet)
        = P_{\mcal{V}_{\boldsymbol{x}}} P_{\mcal{V}_d} k(\boldsymbol{x}, \bullet)
        = P_{\mcal{V}_{\boldsymbol{x}}} b(\boldsymbol{x})^\intercal b
        = \underbrace{b(\boldsymbol{x})^\intercal b(\boldsymbol{x}) K(\boldsymbol{x})^{+}}_{=:D_2} k(\boldsymbol{x}, \bullet) .
    $$
    Combining both equations yields $A_1 k(\boldsymbol{x}, \bullet) = (D_1 - D_2) k(\boldsymbol{x}, \bullet)$, which allows us to write
    \begin{align}
        \norm{A_1}_{\mcal{V}\to\mcal{V}}^2
        = \norm{A_1 P_{\mcal{V}_{\boldsymbol{x}}}}_{\mcal{V}\to\mcal{V}}^2
        &= \sup_{v\in\mcal{V}_{\boldsymbol{x}}} \frac{\norm{A_1v}_{\mcal{V}}^2}{\norm{v}_{\mcal{V}}^2}
        = \sup_{c\in\mbb{R}^n} \frac{c^\intercal (D_1 - D_2) K(\boldsymbol{x}) (D_1 - D_2)^\intercal c}{c^\intercal K(\boldsymbol{x}) c} \\
        &= \lambda_{\mathrm{max}}(K(\boldsymbol{x})^{+1/2} (D_1 - D_2) K(\boldsymbol{x}) (D_1 - D_2)^\intercal K(\boldsymbol{x})^{+1/2}) \\
        &\le \operatorname{tr}(K(\boldsymbol{x})^{+1/2} (D_1 - D_2) K(\boldsymbol{x}) (D_1 - D_2)^\intercal K(\boldsymbol{x})^{+1/2})
        ,
    \end{align}
    where we use the notation $K(\boldsymbol{x})^{+1/2} := (K(\boldsymbol{x})^+)^{1/2}$.
    The final matrix-algebraic expression can be computed explicitly.
    To do this, we will simplify the notation and write $B := b(\boldsymbol{x})$, $K := K(\boldsymbol{x})$ and $G := G^{\boldsymbol{x}}$.
    We will also make extensive use of the symmetry of $K$ and $G$ and the identity $G = BK^{+}B^\intercal$.
    Noting that $D_1 - D_2 = B^\intercal(G^{+} - I)BK^{+}$, we can now use the invariance of the trace under cyclic permutations to obtain
    \begin{align}
        \operatorname{tr}(K^{+1/2} (D_1 - D_2) K (D_1 - D_2)^\intercal K^{+1/2})
        &= \operatorname{tr}(K^{+1/2} B^\intercal(G^{+} - I)BK^{+} K K^{+}B^\intercal(G^{+} - I)B K^{+1/2}) \\
        &= \operatorname{tr}((G^{+} - I)BK^{+} B^\intercal(G^{+} - I)B K^{+} B^\intercal) \\
        &= \operatorname{tr}((G^{+} - I) G (G^{+} - I)G) \\
        &= \operatorname{tr}((I - G)^2) \\
        &= \norm{I - G}_{\mathrm{Fro}}^2
        .
    \end{align}
    This bounds $\norm{A_1}_{\mcal{V}\to\mcal{V}} \le \norm{I - G^{\boldsymbol{x}}}_{\mathrm{Fro}}$.
    
    In the second step, we bound $\norm{A_2}_{\mcal{V}\to\mcal{V}}$.
    For this, we define the space $\mcal{W} := \mcal{V}_{\boldsymbol{x}} + \mcal{V}_d$ and note that $\operatorname{ker}(A_2) \supseteq \operatorname{ker}(P_{\mcal{V}_{\boldsymbol{x}}} - P_{\mcal{V}_d})\supseteq \mcal{W}^\perp$.
    Denoting the $\mcal{V}$-orthogonal complement of $\mcal{V}_d$ in $\mcal{W}$ by $\mcal{W}/\mcal{V}_d \subseteq \mcal{V}_{\boldsymbol{x}}$, we can thus write
    \begin{align}
        \norm{A_2}_{\mcal{V}\to\mcal{V}}
        &= \norm{A_2 P_{\mcal{W}}}_{\mcal{V}\to\mcal{V}} \\
        &= \norm{A_2 P_{\mcal{V}_d} + A_2 P_{\mcal{W}/\mcal{V}_d} }_{\mcal{V}\to\mcal{V}} \\
        &= \norm{A_2 P_{\mcal{V}_d} + A_2 P_{\mcal{V}_{\boldsymbol{x}}} P_{\mcal{W}/\mcal{V}_d} }_{\mcal{V}\to\mcal{V}} \\ 
        &= \norm{A_2 P_{\mcal{V}_d}}_{\mcal{V}\to\mcal{V}} ,
    \end{align}
    where we have used $A_2 P_{\mcal{V}_{\boldsymbol{x}}} = P_{\mcal{V}_{\boldsymbol{x}}} P_{\mcal{V}_d} (P_{\mcal{V}_{\boldsymbol{x}}} - I) P_{\mcal{V}_{\boldsymbol{x}}} = 0$.
    To compute the norm $\norm{A_2 P_{\mcal{V}_d}}_{\mcal{V}\to\mcal{V}}$, we utilise again the identities~\eqref{eq:projectors} to write
    $$
         P_{\mcal{V}_{\boldsymbol{x}}} P_{\mcal{V}_d} P_{\mcal{V}_{\boldsymbol{x}}} b
         = P_{\mcal{V}_{\boldsymbol{x}}} P_{\mcal{V}_d} b(\boldsymbol{x}) K(\boldsymbol{x})^{+} k(\boldsymbol{x}, \bullet)
         = P_{\mcal{V}_{\boldsymbol{x}}} b(\boldsymbol{x}) K(\boldsymbol{x})^{+} b(\boldsymbol{x})^\intercal b
         = \underbrace{b(\boldsymbol{x}) K(\boldsymbol{x})^{+} b(\boldsymbol{x})^\intercal b(\boldsymbol{x}) K(\boldsymbol{x})^{+}}_{=: E_1} k(\boldsymbol{x}, \bullet)
    $$
    and
    $$
         P_{\mcal{V}_{\boldsymbol{x}}} P_{\mcal{V}_d} b
         = P_{\mcal{V}_{\boldsymbol{x}}} b
         = \underbrace{b(\boldsymbol{x}) K(\boldsymbol{x})^{+}}_{=: E_2} k(\boldsymbol{x}, \bullet) .
    $$
    Combining both equations yields $A_2 b = (E_1 - E_2) k(\boldsymbol{x}, \bullet)$, which allows us to write
    \begin{align}
        \norm{A_2}_{\mcal{V}\to\mcal{V}}^2
        = \norm{A_2 P_{\mcal{V}_d}}_{\mcal{V}\to\mcal{V}}^2
        &= \sup_{v\in\mcal{V}_{d}} \frac{\norm{A_2v}_{\mcal{V}}^2}{\norm{v}_{\mcal{V}}^2}
        = \sup_{c\in\mbb{R}^d} \frac{c^\intercal (E_1 - E_2) K(\boldsymbol{x}) (E_1 - E_2)^\intercal c}{c^\intercal c} \\
        &= \lambda_{\mathrm{max}}((E_1 - E_2) K(\boldsymbol{x}) (E_1 - E_2)^\intercal)
        .
    \end{align}
    Noting that $E_1 - E_2 = BK^{+} (B^\intercal BK^{+} - I)$, we can write
    \begin{align}
        \lambda_{\mathrm{max}}((E_1 - E_2) K (E_1 - E_2)^\intercal)
        &= \lambda_{\mathrm{max}}(BK^{+} (B^\intercal BK^{+} - I) K (K^{+} B^\intercal B - I) K^{+}B^\intercal) \\
        &= \lambda_{\mathrm{max}}((G BK^{+} - BK^{+}) K (K^{+} B^\intercal G - K^{+}B^\intercal)) \\
        &= \lambda_{\mathrm{max}}((G - I) BK^{+} K K^{+} B^\intercal (G - I)) \\
        &= \lambda_{\mathrm{max}}((G - I) G (G - I)) \\
        &= \norm{(G - I) G (G - I)}_{2} \\
        &\le \norm{G}_{2} \norm{G - I}_{2}^2 \\
        &\le \norm{G - I}_{2}^2
        ,
    \end{align}
    where the last inequality follows from Lemma~\ref{lem:gramian_ordering}.
    This bounds $\norm{A_2}_{\mcal{V}\to\mcal{V}} \le \norm{I - G^{\boldsymbol{x}}}_{2}$.

    Combining both estimates yields the claimed bound $\norm{A}_{\mcal{V}\to\mcal{V}} \le \norm{A_1}_{\mcal{V}\to\mcal{V}} + \norm{A_2}_{\mcal{V}\to\mcal{V}} \le \norm{I - G^{\boldsymbol{x}}}_{\mathrm{Fro}} + \norm{I - G^{\boldsymbol{x}}}_{2}$.
\end{proof}

\section{Proof of Proposition~\ref{prop:not_submodular}}
\label{app:proof:prop:not_submodular}

Let $\boldsymbol{x}' \subseteq \boldsymbol{x}$ and $y$ be fixed and observe that $\lambda(\boldsymbol{x}) = \mu(\boldsymbol{x})^{-2} = \min_{v\in S(\mcal{V}_d)} \norm{P_{\mcal{V}_{\boldsymbol{x}}} v}_{\mcal{V}}^2$.
This ensures that $\lambda$ is monotone because $\boldsymbol{x}' \subseteq \boldsymbol{x}$ implies
$\|P_{\mcal{V}_{\boldsymbol{x}'}}v\|_{\mathcal{V}} \le \|P_{\mcal{V}_{\boldsymbol{x}}}v\|_{\mathcal{V}}$ for every $v\in\mathcal{V}_d$.
To show that it is not submodular, we have to construct a counter-example to the inequality
\begin{equation}
\label{eq:exact_submodularity}
    \lambda(\boldsymbol{x} \oplus y) - \lambda(\boldsymbol{x})
    \le \lambda(\boldsymbol{x}' \oplus y) - \lambda(\boldsymbol{x}') .
\end{equation}
We do this by defining $\mcal{V}_d := \mcal{V}_{\boldsymbol{x}'} + \inner{v}$ for some $v\in S(\mcal{V})$, which is chosen later.
This simplifies
$$
    \lambda(\boldsymbol{z})
    = \min_{w\in \mcal{V}_{\boldsymbol{x}'}, w\perp v} \frac{\norm{w}_{\mcal{V}}^2 + \norm{P_{\mcal{V}_{\boldsymbol{z}}} v}_{\mcal{V}}^2}{\norm{w}_{\mcal{V}}^2 + \norm{v}_{\mcal{V}}^2}
    = \norm{P_{\mcal{V}_{\boldsymbol{z}}} v}_{\mcal{V}}^2
$$
for any $\boldsymbol{z}$ with $\boldsymbol{x}' \subseteq \boldsymbol{z}$.
Now, we decompose $k(y, \bullet) = \omega_1 + \omega_2 + \omega_3$ with
\begin{align}
    \omega_1 := P_{\mcal{V}_{\boldsymbol{x}'}} k(y,\bullet),
    \qquad
    \omega_2 := (P_{\mcal{V}_{\boldsymbol{x}}} - P_{\mcal{V}_{\boldsymbol{x}'}}) k(y,\bullet)
    \qquad\text{and}\qquad
    \omega_3 := (I - P_{\mcal{V}_{\boldsymbol{x}}}) k(y,\bullet) .
\end{align}
This allows us to write
\begin{align}
    \lambda(\boldsymbol{x}\oplus y)
    &= \norm{P_{\mcal{V}_{\boldsymbol{x}}} v + P_{\inner{\omega_3}}v}_{\mcal{V}}^2
    = \lambda(\boldsymbol{x}) + \frac{(\omega_3, v)_{\mcal{V}}^2}{\norm{\omega_3}_{\mcal{V}}^2} \\
    \lambda(\boldsymbol{x}'\oplus y),
    &= \norm{P_{\mcal{V}_{\boldsymbol{x}'}} v + P_{\inner{\omega_2 + \omega_3}}v}_{\mcal{V}}^2
    = \lambda(\boldsymbol{x}') + \frac{(\omega_2 + \omega_3, v)_{\mcal{V}}^2}{\norm{\omega_2 + \omega_3}_{\mcal{V}}^2},
\end{align}
and implies that equation~\eqref{eq:exact_submodularity} becomes
\begin{equation}
\label{eq:submodular_counter-example}
    \frac{(\omega_3, v)_{\mcal{V}}^2}{\norm{\omega_3}_{\mcal{V}}^2}
    \le \frac{(\omega_2 + \omega_3, v)_{\mcal{V}}^2}{\norm{\omega_2 + \omega_3}_{\mcal{V}}^2} .
\end{equation}
A counter-example is given by the choice $v = \omega_3$.

\section{The Schur complement}
\label{app:schur}

Consider a linearly independent set of vectors $\omega_1,\ldots,\omega_n$ with Gramian matrix $C$.
Moreover, define for $L \dot\cup S = \braces{1,\ldots,n}$ the notations $\omega_S$ and $P_{\inner{\omega_L}}\omega_S$ just as in Proposition~\ref{prop:approx_submodular}.
We can then ask ourselves what the Gramian of $(I - P_{\inner{\omega_L}}) \omega_S$ looks like.
Some algebra reveals that this Gramian is given by the Schur complement $C/C_{L,L} := C_{S,S} - C_{S,L} C_{L,L}^{-1} C_{L,S}$.
To prove $\lambda_{\mathrm{min}}(C/C_{L,L}) \ge \lambda_{\mathrm{min}}(C)$, consider the quadratic function
$$
    v
    \mapsto v^\intercal C v
    = \begin{bmatrix}
        v_S & v_L
    \end{bmatrix}^\intercal
    \begin{bmatrix}
        C_{S,S} & C_{S,L} \\
        C_{L,S} & C_{L,L}
    \end{bmatrix}
    \begin{bmatrix}
        v_S \\
        v_L
    \end{bmatrix}
$$
for fixed $v_S$ and optimise it over $v_L$.
The minimiser of this problem is given by $v_L = - C_{L,L}^{-1}C_{L,S}v_S$ and the minimum is
$$
    v_S^\intercal (C_{S,S} - C_{S,L}C_{L,L}^{-1}C_{L,S}) v_S
    = v_S^\intercal (C / C_{L,L}) v_S .
$$
This implies
$$
    \lambda_{\mathrm{min}}(C/C_{L,L})
    = \min_{v_S\in\mbb{R}^{\abs{S}}} \frac{v_S^\intercal (C / C_{L,L}) v_S}{v_S^\intercal v_S}
    = \min_{v_S\in\mbb{R}^{\abs{S}}} \frac{\min_{v_L} v^\intercal C v}{v_S^\intercal v_S}
    = \min_{v\in\mbb{R}^{n}} \frac{v^\intercal C v}{v_S^\intercal v_S}
    \ge \min_{v\in\mbb{R}^{n}} \frac{v^\intercal C v}{v^\intercal v}
    = \lambda_{\mathrm{min}}(C)
    .
$$
\section{The kernel of \texorpdfstring{$\boldsymbol{H^1(\rho)}$}{H1 with Gaussian measure}}
\label{app:gaussian_rkhs}

In this section, we derive an explicit expression for the kernel $k_x := k(x, \bullet)$ of the RKHS $H^1(\rho)$ with the Gaussian measure $\rho$.
By an abuse of notation, we also denote by $\rho$ the density of the Gaussian measure with respect to the Lebesgue measure.
Performing integration by parts on the intervals $(-\infty, x]$ and $[x,\infty)$ for the Riesz representation equation $(\phi, k_x)_{H^1(\rho)} = \phi(x)$ with smooth test functions $\phi$ yields the condition
\begin{equation}
\label{eq:kernel_weak_form}    
    -(\phi, k_x'')_{L^2(\rho)} + (\phi, pk_x')_{L^2(\rho)} + (\phi, k_x)_{L^2(\rho)} + [\phi k_x'\rho]_{-\infty}^x + [\phi k_x'\rho]_x^{\infty} = \phi(x) ,
\end{equation}
with the function $p:\mbb{R}\to\mbb{R}$ defined by $p(y) := y$.
Now define $k_{\mathrm{L},x} := k_x\vert_{(-\infty, x]}$ and $k_{\mathrm{R},x} := k_x\vert_{[x, \infty)}$ and observe that equation~\eqref{eq:kernel_weak_form} provides the following six conditions on $k_x$.
\begin{enumerate}
    \item By considering the $L^2$-term in the variational equation for test functions $\phi$ that are compactly supported in $(-\infty, x)$, we see that
    \begin{equation}
    \label{eq:kL_ode}
        -k_{\mathrm{L},x}''(y) + y k_{\mathrm{L},x}'(y) + k_{\mathrm{L},x}(y) = 0
        \quad\text{on}\quad (-\infty, x) .
    \end{equation}
    \item By considering the $L^2$-term in the variational equation for test functions $\phi$ that are compactly supported in $(x, \infty)$, we see that
    \begin{equation}
    \label{eq:kR_ode}
        -k_{\mathrm{R},x}''(y) + y k_{\mathrm{R},x}'(y) + k_{\mathrm{R},x}(y) = 0
        \quad\text{on}\quad (x, \infty) .
    \end{equation}
    \item By considering the boundary terms that contain $x$, we obtain $(k_{\mathrm{L}, x}'(x) - k_{\mathrm{R},x}'(x))\rho(x) = 1$.
    \item By considering the boundary term containing the limit $y\to-\infty$, we conclude that
    \begin{equation}
    \label{eq:kL_limit}
        \lim_{y\to-\infty} k_{\mathrm{L},x}'(y)\rho(y)=0 ,
    \end{equation}
    because $\phi$ can approach arbitrary values.
    \item For the same reason, we obtain the condition
    \begin{equation}
    \label{eq:kR_limit}
        \lim_{y\to\infty} k_{\mathrm{R},x}'(y)\rho(y)=0 .
    \end{equation}
    \item Since $k_x\in H^1(\rho)$ must be continuous, it must hold that $k_{\mathrm{L},x}(x) = k_{\mathrm{R},x}(x)$.
\end{enumerate}
The differential equations~\eqref{eq:kL_ode} and~\eqref{eq:kR_ode} are Sturm--Liouville equations and are solved by the functions
$$
    k_{\mathrm{L/R}, x}(y) = c_{\mathrm{L/R}, 1}\exp(\tfrac{y^2}{2})\pars*{\operatorname{erf}(\tfrac{y}{\sqrt{2}}) + c_{\mathrm{L/R}, 2}} .
$$
This means that $k_{\mathrm{L/R}, x}'(y) = yk_x(y) + \sqrt{\tfrac{2}{\pi}}c_{\mathrm{L/R}, 1}$ and the limit condition~\eqref{eq:kL_limit} is thus equivalent to
$$
    \lim_{y\to-\infty} y\pars*{\operatorname{erf}(\tfrac{y}{\sqrt{2}}) + c_{\mathrm{L},2}} = 0
    \qquad\Leftrightarrow\qquad
    \lim_{y\to-\infty} \operatorname{erf}(\tfrac{y}{\sqrt{2}}) + c_{\mathrm{L},2} = 0
    \qquad\Leftrightarrow\qquad
    c_{\mathrm{L},2}=1 .
$$
Analogously, we obtain
$$
    c_{\mathrm{R},2}=-1
$$
and, since the continuity condition $k_{\mathrm{L},x}(x) = k_{\mathrm{R},x}(x)$ has to be satisfied for all $x\in\mbb{R}$, we find that
\begin{align}
	&\ c_{\mathrm{L}, 1}\exp(\tfrac{x^2}{2})\pars*{\operatorname{erf}(\tfrac{x}{\sqrt{2}}) + 1} = c_{\mathrm{R}, 1}\exp(\tfrac{x^2}{2})\pars*{\operatorname{erf}(\tfrac{x}{\sqrt{2}}) - 1}\\
	\Leftrightarrow
	&\ c_{\mathrm{L}, 1}\pars*{\operatorname{erf}(\tfrac{x}{\sqrt{2}}) + 1} = c_{\mathrm{R}, 1}\pars*{\operatorname{erf}(\tfrac{x}{\sqrt{2}}) - 1} .
\end{align}
We can thus write $c_{\mathrm{L},1} := c\pars*{\operatorname{erf}(\tfrac{x}{\sqrt{2}}) - 1}$ and $c_{\mathrm{R},1} := c\pars*{\operatorname{erf}(\tfrac{x}{\sqrt{2}}) + 1}$ for some constant $c\in\mbb{R}$.
This yields the equations
$$
\begin{aligned}
	k_{\mathrm{L},x}(y)
	&= c\exp(\tfrac{y^2}{2})\pars*{\operatorname{erf}(\tfrac{y}{\sqrt{2}}) + 1}\pars*{\operatorname{erf}(\tfrac{x}{\sqrt{2}}) - 1} \\
	k_{\mathrm{R},x}(y)
	&= c\exp(\tfrac{y^2}{2})\pars*{\operatorname{erf}(\tfrac{x}{\sqrt{2}}) + 1}\pars*{\operatorname{erf}(\tfrac{y}{\sqrt{2}}) - 1} .
\end{aligned}
$$
The constant $c$ is determined by the boundary condition $(k_{\mathrm{L}, x}'(x) - k_{\mathrm{R},x}'(x))\rho(x) = 1$
and evaluates to $c = -\tfrac1{2\rho(x)}$.
This yields the final formula
$$
	k_x(y) = \sqrt{\tfrac{\pi}{2}}\exp\pars*{\tfrac{x^2 + y^2}{2}} \pars*{1 + \operatorname{erf}\pars*{\tfrac{\min\braces{x, y}}{\sqrt{2}}}} \pars*{1 - \operatorname{erf}\pars*{\tfrac{\max\braces{x, y}}{\sqrt{2}}}} .
$$

\end{document}